\newtheorem{theorem}{Theorem}[section]
\newtheorem{corollary}[theorem]{Corollary}
\newtheorem{proposition}[theorem]{Proposition}
\newtheorem{lemma}[theorem]{Lemma}
\newtheorem*{utheorem}{Theorem}
\newcommand{\R}{\mathbf{R}}
\newcommand{\N}{\mathbf{N}}
\newcommand{\e}{\mathrm{e}}
\renewcommand{\P}{\mathbf{P}}
\DeclareMathOperator{\Cent}{C}
\DeclareMathOperator{\Sym}{Sym}
\DeclareMathOperator{\cp}{cp}
\newcommand{\E}{\mathbf{E}}
\renewcommand{\r}{\mathbf{r}}
\newcommand{\kSn}{\kappa(S_n)}
\newcounter{thmlistcnt}
\newenvironment{thmlist}%
    {\setcounter{thmlistcnt}{0}%
    \begin{list}{\emph{(\roman{thmlistcnt})}}{%
        \usecounter{thmlistcnt}%
        \setlength{\topsep}{0pt}%
        \setlength{\leftmargin}{0pt}%
        \setlength{\itemsep}{0pt}%
        \setlength{\itemindent}{28pt}}%
    }%
    {\end{list}}%
\renewcommand{\succeq}{\succcurlyeq}
\begin{document}

\title[The Probability that Group Elements are Conjugate]{The Probability that a Pair of Elements of a Finite Group are Conjugate}


\author{Simon R. Blackburn}
\address{Department of Mathematics, Royal Holloway, University of London, Egham, Surrey TW20 0EX, United Kingdom}
\email{s.blackburn@rhul.ac.uk}

\author{John R. Britnell}
\address{Department of Mathematics, University of Bristol, University Walk, Bristol BS8 1TW, United Kingdom}
\email{j.r.britnell@bristol.ac.uk}
\thanks{The research of the second author
was supported  by the Heilbronn Institute for Mathematical Research}

\author{Mark Wildon}
\address{Department of Mathematics, Royal Holloway, University of London, Egham, Surrey TW20 0EX, United Kingdom}
\email{mark.wildon@rhul.ac.uk}

\subjclass[2000]{Primary 20D60, 20B30, 20E45; Secondary 05A05, 05A16.}


\date{24 February 2012}


\keywords{Conjugacy, probability in finite groups, symmetric group, commuting conjugacy classes\vspace{1.5pt}
}

\begin{abstract}
Let $G$ be a finite group, and let $\kappa(G)$ be the probability that
elements $g$, $h\in G$ are conjugate, when $g$ and $h$ are chosen
 independently and uniformly at random. The paper classifies those
groups $G$ such that $\kappa(G)\geq 1/4$, and shows that $G$ is
abelian whenever $\kappa(G)|G| <7/4$. It is also
shown that $\kappa(G)|G|$ depends only on the isoclinism class of $G$.

Specialising to the symmetric group $S_n$, the paper shows that
$\kSn \leq C/n^2$ for an explicitly determined constant $C$. This
bound leads to an elementary proof of a result of Flajolet \emph{et al},
that $\kSn \sim A/n^2$ as
$n\rightarrow \infty$ for some constant $A$. The same techniques
provide analogous results for $\rho(S_n)$, the probability that
two elements of the symmetric group have conjugates that commute.
\end{abstract}

\maketitle

\thispagestyle{empty}

\section{Introduction}
\label{sec:introduction}

Let $G$ be a finite group. We define $\kappa(G)$ to be the probability
that two elements $g$, $h\in G$ are conjugate, when $g$ and $h$ are
chosen independently and uniformly at random from $G$. Let
$g_1,g_2,\ldots,g_k$ be a complete set of representatives for the
conjugacy classes of $G$. It is easy to see that
\begin{equation}
\label{eq:kappa}
\kappa(G)=\frac{1}{|G|^2}\sum_{i=1}^k|g_i^G|^2 = \sum_{i=1}^k\frac{1}{|\Cent_G(g_i)|^2}
\end{equation}
where $C_G(g)$ denotes the centralizer of an element $g \in G$.

This paper is divided into two parts. In the first part of the paper,
we begin a study of $\kappa(G)$ by proving two `gap' results
which classify the groups
for which $\kappa(G)$ is unusually small or large.
We also show that $\kappa(G)|G|$ is an invariant of the isoclinism
class of $G$.
In the second part, we prove bounds and find the asymptotic
behaviour of
$\kSn $ where~$S_n$ is the symmetric group of degree $n$.
Our techniques allow us to prove similar results on the
probability $\rho(S_n)$ that two elements of~$S_n$,
chosen independently and uniformly at random,
have conjugates that
commute. We end the paper with some
further remarks and open problems
on the behaviour of $\kappa(G)$ and $\rho(G)$ when $G$ is an arbitrary finite group

We now describe our results in more detail.

\subsection{Results on general finite groups}

It is clear that $\kappa(G) \ge 1/|G|$ and that equality holds
exactly when $G$ is abelian.
We prove the following `gap' result.

\begin{theorem}\label{thm:lowergap}
Let $G$ be a finite group. 
If $\kappa(G) < \frac{7}{4|G|}$
then $G$ is abelian. Moreover, \hbox{$\kappa(G) = \frac{7}{4|G|}$} if
and only if the index of the centre~$Z(G)$ in~$G$ is~$4$.
\end{theorem}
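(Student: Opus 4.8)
The plan is to bound $\kappa(G)|G|$ directly from the class equation, splitting off the central classes. Write $n = |G|$, $z = |Z(G)|$ and $m = [G:Z(G)] = n/z$. From the first equality in \eqref{eq:kappa} we have $\kappa(G)|G| = \frac{1}{n}\sum_i |g_i^G|^2$. The $z$ central elements each form a singleton class and so contribute $z/n = 1/m$ to this sum, while the non-central classes have sizes $c_1,\dots,c_r$ with $c_j \ge 2$ and $\sum_j c_j = n - z$.

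The first key step is the trivial inequality $c_j^2 \ge 2c_j$, which holds precisely because a non-central class contains at least two elements. Summing over $j$ gives $\sum_j c_j^2 \ge 2(n-z)$, and hence
\[
\kappa(G)|G| = \frac{1}{n}\Bigl(z + \sum_j c_j^2\Bigr) \ge \frac{2n - z}{n} = 2 - \frac{1}{m}.
\]
The second key step is the standard fact that $G/Z(G)$ is never cyclic when $G$ is non-abelian, so that $m \ge 4$ in that case. Combining the two steps yields $\kappa(G)|G| \ge 2 - 1/4 = 7/4$ for every non-abelian $G$, which is exactly the first assertion: if $\kappa(G) < 7/(4|G|)$ then $G$ must be abelian.

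For the equality statement I would track when both inequalities above are sharp. Equality in $c_j^2\ge 2c_j$ forces every non-central class to have size exactly $2$, and equality in $2 - 1/m \ge 7/4$ forces $m = 4$. The one point that needs checking is that $m = 4$ already implies the former condition: if $[G:Z(G)] = 4$ then $G/Z(G)$ is non-cyclic of order $4$, hence isomorphic to $C_2\times C_2$, so for any non-central $g$ the subgroup $\langle g, Z(G)\rangle$ has index $2$ and must coincide with $\Cent_G(g)$, giving $|g^G| = 2$. Thus $m = 4$ yields $\kappa(G)|G| = 7/4$, and conversely $\kappa(G)|G| = 7/4$ (which rules out the abelian case, where the value is $1$) forces all inequalities to be equalities and hence $m = 4$; this establishes the claimed equivalence. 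I do not expect a serious obstacle here: the entire argument rests on the one-line sum-of-squares bound together with control of $[G:Z(G)]$, and the only care required is in matching up the two equality conditions.
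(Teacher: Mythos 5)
Your proof is correct and follows essentially the same route as the paper: split the class equation into central and non-central classes, bound the sum of squares of non-central class sizes below by $2(|G|-|Z(G)|)$, invoke $[G:Z(G)]\ge 4$ for non-abelian $G$, and settle the equality case by observing that index $4$ forces every non-central centralizer to be $\langle g, Z(G)\rangle$ of index $2$. Your one-line inequality $c_j^2 \ge 2c_j$ is a slightly cleaner way to reach the paper's intermediate bound than its minimization argument, but the substance is identical.
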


This theorem is proved by an elementary counting argument. There are
many groups whose centres have index $4$, for example
any group of the form $D_8 \times A$
where $D_8$ is the dihedral group of order $8$ and $A$ is
abelian has this property. The observation
that the groups with this property form a single
isoclinism class motivates our second theorem.

\begin{theorem}\label{thm:isoclinism}
If $G$ and $H$ are isoclinic finite groups then $\kappa(G)|G| = \kappa(H)|H|$.
\end{theorem}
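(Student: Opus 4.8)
The plan is to prove Theorem~\ref{thm:isoclinism} by re-expressing $\kappa(G)|G|$ entirely in terms of data that isoclinism preserves, namely the commutator map viewed as a pairing on the central quotient. Recall that an isoclinism between $G$ and $H$ is a pair of isomorphisms $G/Z(G) \to H/Z(H)$ and $[G,G] \to [H,H]$ that are compatible with the commutator maps. The key observation I would exploit is that the conjugacy class sizes appearing in~\eqref{eq:kappa} are governed by centralizer indices, and $|g^G| = [G : C_G(g)]$ depends only on how $g$ acts by conjugation, which is controlled by the image of $g$ in $G/Z(G)$.

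The first step is to rewrite $\kappa(G)|G|$ using~\eqref{eq:kappa} as a sum over conjugacy classes. I would group the elements of $G$ according to their image $\bar g$ in the central quotient $\bar G = G/Z(G)$. For a fixed coset $gZ(G)$, every element has the same centralizer, since $C_G(gz) = C_G(g)$ for $z \in Z(G)$; moreover conjugation acts within cosets of $Z(G)$ because $[G,G]$ need not lie in $Z(G)$ in general — so here the second step is to express the class of $g$ in terms of the commutator structure. Writing $\kappa(G)|G| = \frac{1}{|G|}\sum_{g \in G}\frac{|G|}{|C_G(g)|} = \frac{1}{|G|}\sum_{g \in G}|g^G|$, I note this equals the \emph{average conjugacy class size}, and $|g^G| = [G:C_G(g)] = [\bar G : C_{\bar G}(\bar g)] \cdot (\text{correction})$. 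The correction factor is where the commutator subgroup enters: the orbit of $g$ under conjugation is $g\{[g,x] : x \in G\}$, and the set $\{[g,x] : x \in G\}$ is exactly the image of the commutator pairing, which isoclinism transports isomorphically.

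The crucial step, and the one I expect to be the main obstacle, is showing that the quantity $\sum_{g \in G} |g^G|$ can be partitioned so that each summand is expressed purely through the isoclinism data $(G/Z(G),\ [G,G],\ \text{commutator map})$. Concretely, for $\bar g \in \bar G$ the number of elements of $G$ mapping to $\bar g$ is $|Z(G)|$, and each such element $g$ satisfies $|g^G| = |\{[g,x] : x \in G\}|$. This latter set is the image of the homomorphism $x \mapsto [g,x]$, which factors through $\bar G$ and lands in $[G,G]$; its cardinality is determined by the commutator map alone. Hence
\begin{equation*}
\kappa(G)|G| = \frac{1}{|G|}\sum_{g \in G}\bigl|\{[g,x] : x \in G\}\bigr| = \frac{|Z(G)|}{|G|}\sum_{\bar g \in \bar G}\bigl|\{[\bar g, \bar x] : \bar x \in \bar G\}\bigr|.
\end{equation*}
Since $|Z(G)|/|G| = 1/|\bar G|$, the prefactor becomes $1/|\bar G|$, which depends only on $\bar G$, and the sum depends only on the commutator pairing. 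The subtlety I must handle carefully is verifying that $|g^G| = |\{[g,x] : x \in G\}|$ holds as sets of the right size — this follows because $x \mapsto g^{-1}x^{-1}gx = [g,x]$ maps the conjugates of $g$ bijectively onto this commutator set (two elements $x, y$ give the same conjugate iff $[g,x]=[g,y]$).

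Once this identity is established, Theorem~\ref{thm:isoclinism} follows immediately: an isoclinism supplies compatible isomorphisms $\bar G \to \bar H$ and $[G,G] \to [H,H]$ under which the commutator maps correspond, so the final expression is visibly the same for $G$ and $H$. I would close by remarking that this formula makes transparent the earlier claim that all groups whose centre has index~$4$ share a common value of $\kappa(G)|G|$, since they all lie in the isoclinism class of $D_8$, with $\bar G \cong C_2 \times C_2$ and a two-element commutator image.
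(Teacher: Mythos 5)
Your proof is correct and follows essentially the same route as the paper's: both rewrite $\kappa(G)|G|$ as the average conjugacy class size, identify $|g^G|$ with $\bigl|\{[g,x] : x \in G\}\bigr| = |f_G(\overline{g},\overline{G})|$, and sum over $\overline{G}$ with the prefactor $|Z(G)|/|G| = 1/|\overline{G}|$, all of which the isoclinism data determines. (One cosmetic slip: the map $x \mapsto [g,x]$ is not in general a homomorphism, but you use only the cardinality of its image, which your bijection argument with conjugates of $g$ establishes correctly.)
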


Theorem~\ref{thm:isoclinism} implies that for each isoclinism class $I$ there exists a constant $b_I$ such that $\kappa(G)=b_I/|G|$ for all groups $G\in I$. We provide a definition of isoclinism
and set up the relevant notation immediately before the proof of
Theorem~\ref{thm:isoclinism} in \S\ref{sec:isoclinism} below.

Our third main theorem
classifies the groups $G$ such that $\kappa(G)$ is
large.

\begin{theorem}\label{thm:uppergap}
Let $G$ be a non-trivial finite group. Then $\kappa(G) \ge 1/4$ if and
only if one of the following holds:
\begin{thmlist}
\item $|G| \le 4$;
\item $G \cong A_4$, $S_4$, $A_5$ or $C_7 \rtimes C_3$;
\item \label{scinv} $G \cong A \rtimes C_2$ where
$A$ is a
non-trivial abelian group of odd order and the non-identity element of
$C_2$ acts on $A$ by inversion. 
\end{thmlist}
\end{theorem}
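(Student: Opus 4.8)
The plan is to prove Theorem~\ref{thm:uppergap} by first establishing a structural constraint and then handling the remaining cases by a bounded search. The key quantity is $\kappa(G) = \sum_i |\Cent_G(g_i)|^{-2}$, summed over class representatives. I would begin by bounding the number of conjugacy classes $k$ from below in terms of $\kappa(G)$. Since each centralizer satisfies $|\Cent_G(g_i)| \geq 1$, but more usefully $|\Cent_G(g_i)| \geq |Z(G)|$ for all $i$ and $|\Cent_G(g_i)| = |G|$ for central elements, one can separate the central contribution. Writing $z = |Z(G)|$ and noting the $z$ central classes each contribute $1/|G|^2$, the condition $\kappa(G) \geq 1/4$ forces the noncentral classes to contribute substantially, which in turn bounds the centralizer orders of noncentral elements from above. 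The cleanest engine here is the class equation together with the Cauchy--Schwarz-type observation that $\kappa(G) \geq 1/4$ makes $G$ have few classes relative to $|G|$, and that large groups cannot achieve this.

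The main reduction I would aim for is an explicit upper bound on $|G|$, or at least on the index $|G:Z(G)|$, when $\kappa(G) \geq 1/4$. Here Theorem~\ref{thm:isoclinism} is the decisive tool: since $\kappa(G)|G|$ is an isoclinism invariant, it suffices to analyse \emph{stem} groups (those with $Z(G) \leq [G,G]$) in each isoclinism class, and for such groups $|Z(G)|$ is bounded in terms of $|G:Z(G)|$. Thus I would first prove that $\kappa(G) \geq 1/4$ implies $G/Z(G)$ is small, reducing to finitely many possibilities for the central quotient. For each such quotient I would then identify the isoclinism class and compute the invariant $b_I = \kappa(G)|G|$ on a convenient representative, checking whether $b_I/|G| \geq 1/4$ can hold. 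The families in case~(iii), the groups $A \rtimes C_2$ with $C_2$ acting by inversion, are exactly the generalized dihedral groups, and for these the class structure is transparent: the inversion action fuses each pair $\{a, a^{-1}\}$ and the outer coset consists of involutions forming large classes, so $\kappa$ is computed directly and seen to equal $1/4$ independently of $|A|$.

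The strategy is therefore: (a) use the class-equation/counting bound to force $|G:Z(G)|$ into a finite list; (b) invoke isoclinism to replace each candidate by a stem representative and reduce to finitely many groups up to isoclinism; (c) compute $\kappa$ on each representative, separating out the infinite family (iii) where the answer is uniformly $1/4$ and a handful of sporadic groups $A_4, S_4, A_5, C_7 \rtimes C_3$ together with the small cases $|G| \leq 4$. The converse direction, that each listed group indeed has $\kappa(G) \geq 1/4$, is a finite verification: for $|G| \leq 4$ the group is abelian with $\kappa = 1/|G| \geq 1/4$; the four named groups are checked from their known class sizes; and the generalized dihedral family is the direct computation just described.

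The hard part will be step~(a): extracting a clean, tight upper bound on $|G:Z(G)|$ from $\kappa(G) \geq 1/4$ without it being so weak that the ensuing case analysis becomes unmanageable. The naive bound from the class equation is likely too lax, so I expect the real work to lie in a careful estimate that exploits how the $1/4$ threshold interacts with the distribution of centralizer orders---in particular showing that a single small noncentral centralizer already pushes $\kappa$ below $1/4$ unless the group is one of the exceptional types. Balancing this against the inversion family, which achieves exactly $1/4$ with arbitrarily large $|A|$, is delicate: the bound on $|G:Z(G)|$ must accommodate that family (where the quotient is dihedral of order $2|A|/|Z(G)|$ and \emph{not} bounded) while still bounding all other cases. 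This suggests the argument should not bound $|G:Z(G)|$ outright, but rather first peel off the generalized dihedral case by a separate characterization, and only then bound the quotient in the remaining situation.
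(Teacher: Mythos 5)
Your central reduction does not work. You propose to show that $\kappa(G)\ge 1/4$ forces $G/Z(G)$ to be small and then to run an isoclinism argument over stem groups; but the extremal family in case (iii) of the theorem consists of groups $A\rtimes C_2$ with $|A|$ odd, which have \emph{trivial} centre, so $|G:Z(G)|=|G|$ is unbounded on exactly the groups you must retain. You notice this tension at the end and suggest ``peeling off'' the generalized dihedral case first, but you give no mechanism for doing so, and that peeling is in fact the hard group-theoretic content: the paper's route is to show (via a majorization/Karamata inequality applied to the vector of reciprocal centralizer orders, Proposition~\ref{prop:class}) that $\kappa(G)\ge 1/4$ forces the smallest centralizer to have order $2$ or $3$, with tight constraints on the next few; a centralizer of order $2$ means a self-centralizing involution, and Frobenius-group theory plus Thompson's nilpotency theorem then yield the structure $A\rtimes C_2$ with $A$ abelian of odd order acted on by inversion (Proposition~\ref{prop:scinvol}). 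Nothing in your sketch replaces this step, and the class-equation heuristics you mention will not deliver it: a single conjugacy class of size $|G|/2$ already contributes $1/4$ to $\kappa(G)$, so no bound on the number of classes or on $|G:Z(G)|$ can be extracted from the threshold alone.

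The isoclinism reduction is also a dead end for the remaining cases: $A_4$, $S_4$, $A_5$ and $C_7\rtimes C_3$ all have trivial centre, as do the groups $A\rtimes C_2$, so each lies in an isoclinism class essentially by itself (their central quotients are pairwise non-isomorphic as $|A|$ varies), and Theorem~\ref{thm:isoclinism} gives no finite list to check. The paper does not use isoclinism in this proof at all. After disposing of the self-centralizing involution case, the residual cases all have a self-centralizing element of order $3$, and the paper invokes the Feit--Thompson theorem on groups with a self-centralizing subgroup of order $3$ (together with Lemma~\ref{lemma:quo}, Lemma~\ref{lemma:frobenius} and Lemma~\ref{lemma:frob3p}) to pin down $A_4$, $C_7\rtimes C_3$, $S_4$ and $A_5$. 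Your proposal contains no substitute for this classification input, and a purely numerical search over ``stem representatives'' cannot be carried out because the candidate set is not finite under your reduction. (A minor additional point: for the groups in case (iii) one has $\kappa(G)=\tfrac14+\tfrac{1}{|G|}-\tfrac{1}{|G|^2}$, strictly greater than $1/4$, not equal to it.)
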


Here $A_n$ denotes the alternating group of degree $n$,
and $C_7 \rtimes C_3$ is the Frobenius group of order $21$,
which we may define
by $C_7 \rtimes C_3 = \left< g, h : g^7 = h^3 = 1,g^h = g^2
\right>$.
Recall that an element $g$ of a group $G$
is said to be \emph{self-centralising} if
$C_G(g)=\langle g\rangle$.
The infinite family of groups
of the form $A \rtimes C_2$ appearing in the theorem
 consists precisely of those
groups which contain a self-centralizing involution;
we include a brief proof of this fact as part of the proof of
Proposition~\ref{prop:scinvol}.
This proposition implies that if $G$ is such a group then
\hbox{$\kappa(G)=1/4+1/|G|-1/|G|^2$}. Thus one consequence of
Theorem~\ref{thm:uppergap} is that for all $\alpha>1/4$ there are only
finitely many groups~$G$ with $\kappa(G)\geq \alpha$. Since this is
not true when $\alpha\leq 1/4$, the threshold in
Theorem~\ref{thm:uppergap} is a natural one. 

The proof of Theorem~\ref{thm:uppergap}
uses the theory of Frobenius groups, and a
theorem of 
Feit and 
Thompson~\cite{FeitThompson} on groups
with a self-centralizing element of order $3$.

\subsubsection*{Background.}
Though we are not aware of any prior work concerning~$\kappa(G)$ for a
general finite group $G$, there are related quantities that have been
much studied. We give a brief overview as follows.  Let $\cp(G)$ be
the commuting probability for $G$, that is, the probability that a
pair of elements of $G$, chosen independently and uniformly at random,
commute.
Let $k(G)$ be the number of conjugacy class of $G$.  It is a
result of Erd\H{o}s and Tur\'an \cite{ErdosTuranIV} that the number of
pairs of commuting elements of $G$ is $|G|k(G)$.  From this it
follows that $\cp(G)=k(G)/|G|$.

In \cite[Lemma~2.4]{Lescot}, Lescot proved that if
$G$ and $H$ are
isoclinic finite groups, then $\mathrm{cp}(G)=\mathrm{cp}(H)$.
Our Theorem~\ref{thm:isoclinism} gives the analogous result
for the conjugacy probability $\kappa$.

In \cite{Gustafson} Gustafson showed that if $G$ is non-abelian, then
$\cp(G) \le 5/8$. This result is
sharp, since the upper bound is realized by the dihedral group of
order $8$, and in fact, by any group~$G$ such that $Z(G)$
has index~$4$ in $G$. Heuristically one might expect that if $G$ is a finite group for which
$\cp(G)$ is large then $\kappa(G)$ should be small, and \emph{vice
versa}.  Indeed, by Theorem~\ref{thm:lowergap}, the non-abelian groups
for which $\cp(G)$ is largest are precisely
those for which $\kappa(G)|G|$ is smallest.  However, this heuristic can mislead. For example if
$G$ is a group with a self-centralizing involution then,
by Proposition~\ref{prop:scinvol} below, $\kappa(G) \ge
1/4$ and $\cp(G) \ge 1/4$.

It is easier to make a connection between $\kappa(G)$ and $k(G)$.
It follows from \eqref{eq:kappa} that
$\kappa(G) \ge 1/k(G)$ with equality if and only if
$G$ is abelian. It was shown by Dixon \cite{Dixon1973} 
that if $G$ is
a non-abelian finite simple group then $k(G)/|G| \le 1/12$;
hence
$\kappa(G) \ge 12/|G|$ for all non-abelian finite simple groups $G$.

Various lower bounds for $k(G)$ have been given in terms of~$|G|$.
We refer the reader
to the survey article by Bertram \cite{Bertram}
which contains a wealth of information about the history and current
state of the problem. Of the general results known, the best
asymptotically is
\[
k(G) \ge \frac{c\log |G|}{(\log\log |G|)^7},
\]
due to Keller \cite{Keller}; it strengthens a result of Pyber
\cite{Pyber} by improving the exponent in the denominator from $8$ to
$7$.

We mentioned above that the proof of Theorem~\ref{thm:uppergap}
uses a theorem of 
Feit and 
Thompson~\cite{FeitThompson} on groups
with a self-centralizing element of order $3$.
This result was the precursor to a number of other
structural results on groups with a centralizer of small degree;
for example, in \cite[Corollary 4]{Herzog},
Herzog built on work of Suzuki
\cite{Suzuki} to classify
all finite simple groups with a centralizer of order at most~$4$.
It would have been possible to base our proof
of Theorem~\ref{thm:uppergap} on this classification.

We note that there has also been considerable
work on the general subject
of inferring structural information about groups from divisibility
properties of centralizer sizes.  We refer the reader to Camina and
Camina~\cite{CaminaCamina} for a survey of this part of the literature.

Finally, we mention two other `gap' results that are in a similar
spirit to Theorem~\ref{thm:uppergap}. 
C.T.C.~Wall~\cite{Wall} has shown that
the proportion of elements $x\in G$ satisfying $x^2=1$ is either~$1$
(in which case $G$ is an elementary abelian $2$-group) or it is at most
$3/4$. Laffey~\cite{Laffeyp,Laffey3} has proved that if
$G$ is a finite group then either
$G$ has exponent $3$, or the proportion of elements $x\in G$
satisfying $x^3=1$ is at most $7/9$. An important observation
in Laffey's proof is that if $N$ is a normal subgroup of $G$ then
the proportion of elements $xN \in G/N$ such that $(xN)^3 = 1$
is at least
as great as the corresponding
proportion in~$G$. We prove the analogous result
for $\kappa$ in Lemma~\ref{lemma:quo} below; in the final section
of this paper we outline a common framework for these results.

G.E.~Wall~\cite{GWall} has
constructed a $5$-group such that $24/25$ of its elements have order
dividing~$5$, so if an analogous result holds for the equation
$x^5=1$, the `gap' must be quite small.

\subsection{Results on symmetric groups}

Turning to the particular case of the symmetric groups $S_n$, we
provide a uniform bound on $\kSn$ in the following theorem.

\begin{theorem}
\label{thm:conjugate_uniform_bound}
For all positive integers~$n$ we have $\kSn \le C_\kappa /n^2$,
where $C_\kappa = 13^2\, \kappa(S_{13})$.
\end{theorem}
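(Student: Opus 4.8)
The plan is to work directly from the cycle-type description of conjugacy classes. Writing a partition $\lambda\vdash n$ in the form $1^{a_1}2^{a_2}\cdots$, the centralizer of a permutation of cycle type $\lambda$ has order $z_\lambda=\prod_{i\ge1}i^{a_i}a_i!$, so by \eqref{eq:kappa} we have $\kSn=\sum_{\lambda\vdash n}z_\lambda^{-2}$. Setting $a_n=n^2\kSn$, the theorem asserts that $a_n\le a_{13}$ for every $n$, that is, that the sequence $(a_n)$ attains its maximum at $n=13$. My strategy is to isolate the dominant contribution to the sum, convert it into a recursion, and then combine an asymptotic estimate valid for large $n$ with a direct computation for small~$n$.

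First I would split the partitions of $n$ according to their largest part. A partition has a part exceeding $n/2$ if and only if that part $\ell$ is unique; deleting it leaves an \emph{arbitrary} partition $\mu$ of $k=n-\ell<n/2$, and the centralizer order factorises as $z_\lambda=\ell\,z_\mu$. Summing over all such $\lambda$ therefore gives exactly $\sum_{0\le k<n/2}\kappa(S_k)/(n-k)^2$, so that
\[
\kSn=\sum_{0\le k<n/2}\frac{\kappa(S_k)}{(n-k)^2}+R_n,
\]
where $R_n$ collects the contribution of the partitions all of whose parts are at most $n/2$. Such a partition has at least two parts, and, more to the point, a partition of a large integer into parts of bounded size is forced to repeat parts and so has a comparatively large centralizer. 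I therefore expect to prove that $n^2R_n\to0$; making this quantitative is one of the technical points of the argument.

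Granting the estimate on $R_n$, multiplying through by $n^2$ gives $a_n=\sum_{0\le k<n/2}\frac{n^2}{(n-k)^2}\kappa(S_k)+o(1)$. For each fixed $k$ the coefficient $n^2/(n-k)^2$ tends to $1$, while the tail $k>13$ can be handled using the inductive bound $\kappa(S_k)\le a_{13}/k^2$; since $\sum_{k>13}k^{-2}=\zeta(2)-\sum_{k\le13}k^{-2}$ is small, this tail contributes only a small multiple of $a_{13}$. In this way I would show that $\limsup_n a_n\le S:=\sum_{k\ge0}\kappa(S_k)$ and, by carrying explicit error terms, that $a_n<a_{13}$ for all $n$ beyond some effectively computable $N_0$. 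It then remains to compute $a_1,\dots,a_{N_0}$ and check that the maximum occurs at $n=13$, whence $a_{13}$ is identified with the constant $C_\kappa$.

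The main obstacle is a genuine one: the naive estimate $\kappa(S_k)\le c/k^2$, summed against the weights $1/\ell^2$ coming from the largest-part recursion, produces a factor $\zeta(2)>1$, so a self-improving induction with a single constant is impossible. The resolution, and the delicate part of the plan, is that the dominant terms involve $\kappa(S_k)$ for \emph{small} $k$, whose exact values lie far below $c/k^2$; one must therefore separate these finitely many leading terms (using their computed values) from the genuinely smaller tail and from the remainder $R_n$, and keep track of explicit constants throughout. This is precisely what pins down the threshold $N_0$ and so justifies reducing the theorem to the finite computation up to $n=13$.
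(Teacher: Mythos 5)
Your exact decomposition by largest part is correct: a part exceeding $n/2$ is necessarily unique, so $z_\lambda=\ell z_\mu$ and the partitions with a part $\ell>n/2$ contribute exactly $\sum_{0\le k<n/2}\kappa(S_k)/(n-k)^2$; this identity is precisely the lower bound in the paper's Proposition~\ref{prop:conjugate_recurrence}. The genuine gap is the claim $n^2R_n\to0$, which you defer as ``one of the technical points'' and justify by saying that a partition with all parts at most $n/2$ is forced to repeat parts and so has a large centralizer. That reasoning fails exactly where $R_n$ is largest: partitions such as $(\lfloor n/2\rfloor,\lfloor n/2\rfloor-1,1)$ have all parts at most $n/2$, no repeated parts, and centralizer order only about $n^2/4$. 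The crude bound $R_n\le(\min z_\lambda)^{-1}\sum z_\lambda^{-1}$ over these partitions gives only $n^2R_n\lesssim 4(1-\log 2)\approx 1.2$, and since the main sum already contributes more than $4.29$ asymptotically, a bound of this quality is not clearly enough to stay under $C_\kappa\approx 5.48$; one needs a finer multi-scale analysis of $\sum_{\lambda_1\le n/2}z_\lambda^{-2}$, with separate centralizer lower bounds for each range of the largest part. Nothing in the proposal supplies this. (The claim is true --- it follows from the Flajolet~\emph{et al.} asymptotics --- but that is essentially what is being proved.)

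The paper sidesteps this difficulty by cutting at a \emph{fixed} threshold rather than at $n/2$: it proves the inequality $\kSn\le s_k(n)^2+\sum_{\ell=k}^n\kappa(S_{n-\ell})/\ell^2$, where $s_k(n)$ is the probability that a random permutation has all cycles of length less than $k$. With $k$ fixed at $15$, Proposition~\ref{prop:small_cycles} gives $s_k(n)\le 1/\lfloor n/(k-1)\rfloor!$, which decays super-polynomially, so the remainder is trivially negligible; the price is that the middle range $k\le\ell\le n/2$ must be absorbed into the inductively bounded sum, which the paper controls with the integral estimate of Lemma~\ref{lemma:pfraclemma}, showing the relevant weight is $\tfrac1k+O(\log n/n)$ rather than the uniform bound $4\sum_{j>k}j^{-2}$. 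Your plan needs the same refinement: bounding $n^2/(n-k)^2$ by $4$ throughout the tail gives $4C_\kappa\sum_{k\ge14}k^{-2}\approx1.6$, which added to $\sum_{k\le13}\kappa(S_k)\approx3.9$ already overshoots $C_\kappa$, so the ``explicit error terms'' you mention must do real work. Finally, expect the threshold $N_0$ to be large: the paper's induction closes only for $n>300$, with exact values computed for $n\le 80$ and the recursive bound used up to $300$. The architecture of your argument is sound and close to the paper's, but the two quantitative pillars --- the bound on $R_n$ and the refined treatment of the weights --- are exactly where the work lies, and neither is carried out.
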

\noindent

It is clear that the bound in Theorem~\ref{thm:conjugate_uniform_bound}
is achieved when $n=13$, and so the constant $C_\kappa$ is the best
possible.
(Calculation shows that $C_\kappa \approx 5.48355$; for
the exact value see~Lemma~\ref{lemma:kappa_comp}.)
Theorem~\ref{thm:conjugate_uniform_bound} is
proved by induction on~$n$, using the inequality for~$\kSn$
established in Proposition~\ref{prop:conjugate_recurrence}.
An interesting feature of our argument is that
to make the induction go through, we require
the exact values of $\kSn$ for $n \le 80$: the
Haskell \cite{PeytonJones} source code
used to compute these values is available from the third
author's website: \url{http://www.ma.rhul.ac.uk/~uvah099/}.
Where feasible we have also verified these values using
Mathematica \cite{Mathematica}.

Using Theorem~\ref{thm:conjugate_uniform_bound} we are able to give an
elementary proof of the following asymptotic result, first proved by
Flajolet, Fusy, Gourdon, Panario and Pouyanne~\cite[\S 4.2]{FlajoletEtAl} using
methods from analytic combinatorics.

\begin{theorem}\label{thm:Plimit}
Let $A_\kappa = \sum_{n=1}^\infty \kSn$. Then $\kSn \sim
A_\kappa/n^2$ as \hbox{$n \rightarrow \infty$}.
\end{theorem}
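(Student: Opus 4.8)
The plan is to deduce the asymptotic statement $\kSn \sim A_\kappa/n^2$ directly from the uniform bound in Theorem~\ref{thm:conjugate_uniform_bound} together with the convergence of the series $A_\kappa = \sum_{n=1}^\infty \kSn$. First I would observe that the uniform bound $\kSn \le C_\kappa/n^2$ immediately guarantees that the series defining $A_\kappa$ converges, since $\sum_n 1/n^2$ does; so $A_\kappa$ is a well-defined finite constant and the statement makes sense. The substance of the theorem is then to show that the tail of the series, suitably rescaled, is asymptotically negligible compared with the individual terms $\kSn$ near~$n$.

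The natural strategy is a tail-summation argument. For a fixed $n$, I would consider the quantity $\sum_{m \ge n} \kappa(S_m)$ and compare it with $n \cdot \kSn$. Using the uniform bound, one has $\sum_{m \ge n} \kappa(S_m) \le C_\kappa \sum_{m \ge n} 1/m^2$, and the standard estimate $\sum_{m \ge n} 1/m^2 \sim 1/n$ controls this tail. The key point I would need is a \emph{monotonicity} or \emph{regularity} property of the sequence $n^2 \kSn$: concretely, I would want to show that consecutive ratios $\kappa(S_{n+1})/\kSn$ behave like $(n/(n+1))^2$ in the limit, or equivalently that $n^2 \kSn$ converges. The cleanest route is to establish that $n^2\kSn \to A_\kappa$ by a squeeze: bounding $n^2 \kSn$ above and below using the recurrence inequality from Proposition~\ref{prop:conjugate_recurrence} and the uniform bound, so that both bounds tend to the same limit.

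Concretely I would argue as follows. From the recurrence inequality for $\kSn$ I expect a relation of the form $\kSn \le \sum_{\text{smaller}} (\text{terms}) + (\text{correction})$ whose dominant contribution, after multiplying by $n^2$, is an average of earlier rescaled values $m^2 \kappa(S_m)$; the uniform bound makes the correction terms $O(1/n)$ and hence negligible. Writing $L = \limsup_n n^2 \kSn$ and $\ell = \liminf_n n^2 \kSn$, I would feed these into the rescaled recurrence to obtain $L \le \ell$ forcing $L = \ell$, so the limit exists; call it $A$. A separate Abelian/Tauberian-style computation then identifies $A$ with $A_\kappa$: summing the asymptotic relation $\kappa(S_m) \sim A/m^2$ and comparing partial sums of $\sum \kappa(S_m)$ against $A\sum 1/m^2$ shows $A = A_\kappa$, using that $\sum_{m=1}^\infty 1/m^2$ is a fixed constant and that the rescaled terms converge.

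The main obstacle will be extracting enough quantitative force from Proposition~\ref{prop:conjugate_recurrence} to run the squeeze cleanly: the recurrence relates $\kSn$ to a weighted combination of $\kappa(S_m)$ for $m < n$, and I must verify that multiplying through by $n^2$ turns this into a genuine averaging operation on the sequence $m^2\kappa(S_m)$ with weights summing to $1 + o(1)$ and with the error terms provably $o(1)$. Getting the weights and the error to cooperate—so that the $\limsup$ and $\liminf$ are pinched together rather than merely bounded—is the delicate step; the uniform bound of Theorem~\ref{thm:conjugate_uniform_bound} is exactly what prevents the tail contributions from spoiling the estimate, which is why the two theorems are proved in this order.
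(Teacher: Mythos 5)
Your proposal assembles the right ingredients (both halves of Proposition~\ref{prop:conjugate_recurrence}, the uniform bound of Theorem~\ref{thm:conjugate_uniform_bound}, a $\liminf$/$\limsup$ squeeze), but the mechanism you describe for running the squeeze and for identifying the limit does not work. First, after multiplying by $n^2$, the sum $\sum_{\ell=k}^n \kappa(S_{n-\ell})/\ell^2$ is \emph{not} an average of the rescaled values $m^2\kappa(S_m)$ with weights summing to $1+o(1)$. Substituting $m=n-\ell$, the dominant terms are those with $m$ small, where the weight $n^2/(n-m)^2$ tends to $1$ and what is being summed is the \emph{unrescaled} value $\kappa(S_m)$. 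So the rescaled recurrence directly produces the partial sum $\sum_{m=0}^{n-k}\kappa(S_m)$ of the series defining $A_\kappa$, plus error terms; there is no averaging of $m^2\kappa(S_m)$, and hence no ``$\limsup\le\liminf$'' argument of the kind you sketch. The terms with $m$ comparable to $n$ are precisely the ones that must be shown negligible --- this is where Theorem~\ref{thm:conjugate_uniform_bound} and an integral estimate such as Lemma~\ref{lemma:pfraclemma} enter --- and they form the error, not the main term.

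Second, and more seriously, your proposed identification of the limit with $A_\kappa$ is false as stated: knowing $\kappa(S_m)\sim A/m^2$ gives no information whatever about the value of $\sum_m\kappa(S_m)$. For instance, if $a_m=A/m^2$ exactly, then $\sum_m a_m=A\pi^2/6\ne A$; no Abelian or Tauberian comparison of partial sums against $A\sum_m 1/m^2$ can recover $A=A_\kappa$ from the asymptotic alone. The identification must come from the recurrence itself. Taking $k=\lfloor 3n/4\rfloor$ in the lower bound of Proposition~\ref{prop:conjugate_recurrence} and using $\ell\le n$ gives $n^2\kSn\ge\sum_{m=0}^{n-k}\kappa(S_m)$, whose limit is $A_\kappa$; taking $k=\lfloor n/\log n\rfloor$ in the upper bound, bounding $n^2 s_k(n)^2$ via Proposition~\ref{prop:small_cycles} and the middle range via the uniform bound, gives $\limsup_n n^2\kSn\le A_\kappa/(1-\epsilon)^2$ for every $\epsilon>0$. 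The two halves pinch directly to $A_\kappa$, with no separate step in which a limit $A$ is first extracted and then identified. Your argument would need to be restructured along these lines to close the gap.
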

\noindent That $A_\kappa$ is well defined follows from Theorem~\ref{thm:conjugate_uniform_bound}.
Flajolet \emph{et al} give the value of
$A_\kappa$ to $15$ decimal places as
$4.26340\,35141\,52669$.

We denote by $\rho(S_n)$ the probability
that if two
elements of~$S_n$ are chosen independently and uniformly at
random, then they have conjugates that commute.
The methods
we use to prove
Theorems~\ref{thm:conjugate_uniform_bound} and~\ref{thm:Plimit}
can be adapted to prove the analogous results for $\rho(S_n)$.

A useful general setting for this probability is given by the
relation on a group $G$, defined by $g\sim h$ if $g$ commutes 
with a conjugate of $h$. This relation
naturally induces a relation on the conjugacy classes of $G$: 
classes~$C$ and $D$ are said to \emph{commute} if they contain 
elements that commute.
Some aspects of this relation
have been described by the second and third authors in \cite{BritnellWildon} and \cite{BritnellWildonGL}; the latter paper
describes commuting classes in the case where $G$ is a general linear group.

We prove the following analogue of Theorem~\ref{thm:conjugate_uniform_bound}.

\begin{theorem}
\label{thm:conj_comm_uniform_bound}
For all positive integers~$n$ we have $\rho(S_n) \le C_\rho /n^2$,
where $C_\rho = 10^2 \rho(S_{10})$.
\end{theorem}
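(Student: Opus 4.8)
The plan is to prove Theorem~\ref{thm:conj_comm_uniform_bound} by mimicking the strategy used for Theorem~\ref{thm:conjugate_uniform_bound}, since the two quantities $\kappa(S_n)$ and $\rho(S_n)$ are governed by the same combinatorial data—the cycle types of permutations—and differ only in which pairs of classes are counted. Two permutations are conjugate precisely when their cycle types coincide, whereas two permutations have conjugates that commute exactly when their cycle types are related in a looser way (one must be expressible using the cycle lengths of the other in a compatible fashion, so that representatives can be made to commute). The starting point is therefore to establish an analogue of Proposition~\ref{prop:conjugate_recurrence}: a recurrence or recursive inequality for $\rho(S_n)$ that expresses it in terms of $\rho(S_m)$ for smaller $m$, obtained by conditioning on, say, the cycle containing the point $n$ or on the action on a distinguished block.

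First I would set up the enumeration carefully. Writing $p(n)$ for the number of partitions of $n$, the relation ``commute up to conjugacy'' partitions pairs of classes according to a combinatorial compatibility condition on partitions; I would express $\rho(S_n)$ as a double sum over pairs of cycle types weighted by the reciprocal centralizer orders, exactly paralleling the formula~\eqref{eq:kappa} but summing only over compatible pairs. The key structural input is a clean description of when two conjugacy classes of $S_n$ commute, which can be extracted from the theory developed in \cite{BritnellWildon}. With this in hand, I would derive the recursive bound for $\rho(S_n)$ by isolating the contribution of cycles of a fixed length (most naturally the longest cycle, or the cycles through a fixed set of points), factoring the centralizer orders accordingly, and bounding the resulting sum in terms of $\rho$ evaluated on symmetric groups of smaller degree.

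The core of the argument is then an induction on $n$. Having fixed the base constant $C_\rho = 10^2\rho(S_{10})$, I would verify the inequality $\rho(S_n) \le C_\rho/n^2$ directly for all $n$ up to some explicit threshold using the computed exact values (just as the proof of Theorem~\ref{thm:conjugate_uniform_bound} relies on the tabulated values for $n \le 80$), and then show that the recursive inequality propagates the bound: assuming $\rho(S_m) \le C_\rho/m^2$ for all $m < n$, I would substitute these into the recurrence and verify that the weighted sum is still at most $C_\rho/n^2$. This final estimation reduces to bounding a sum of the shape $\sum_m (\text{weight}) \cdot (1/m^2)$ against $1/n^2$, which is the same type of elementary analytic estimate that controls the $\kappa$ case.

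The main obstacle I anticipate is twofold. The first difficulty is purely combinatorial: obtaining a recurrence for $\rho(S_n)$ that is both tight enough to make the induction close and simple enough to analyze, because the commuting-classes relation is considerably more intricate than the equality-of-cycle-type condition underlying $\kappa$, so the ``compatible pairs'' do not factor as cleanly. The second difficulty is calibrating the induction base: I expect the naive recurrence to fail to close for small $n$, which is exactly why the analogous $\kappa$ result needs exact values up to $n = 80$; here I would need to determine the analogous threshold for $\rho$ and confirm, using the computed data, that the constant $C_\rho = 10^2\rho(S_{10})$ is indeed extremal and that the bound holds on the initial segment before the asymptotic estimate takes over.
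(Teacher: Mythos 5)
Your overall architecture is the right one (a recursive inequality for $\rho(S_n)$, an induction on $n$, and a computationally verified base segment), and you correctly flag the combinatorial obstacle, but you do not resolve it, and the specific decomposition you propose would fail. You suggest isolating ``the contribution of cycles of a fixed length (most naturally the longest cycle, or the cycles through a fixed set of points)'' and recursing on the complement. For $\kappa$ this works because conjugacy is equality of cycle type, so matching up a single $\ell$-cycle in each permutation leaves two permutations that are again conjugate. For $\rho$ it does not: if $\sigma$ commutes with $\pi^{-1}\tau\pi$, the conjugator $\pi$ need not carry the longest cycle of $\tau$ to a cycle of the same length in any useful alignment with $\sigma$, the longest cycles of $\sigma$ and $\tau$ need not even have equal lengths, and deleting one cycle from each does not leave permutations whose conjugates commute. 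The missing idea in the paper is to pass to \emph{regular} permutations: the set to remove is the orbit, containing a longest cycle, of the abelian group $\langle \sigma, \pi^{-1}\tau\pi\rangle$; on such an orbit both permutations act regularly (all cycles of equal length) but not necessarily as a single cycle. This yields the recurrence
\[
\rho(S_n) \le s_k(n)^2 + \sum_{\ell=k}^{n} r(\ell)^{2}\, \rho(S_{n-\ell}),
\]
where $r(\ell)$ is the probability that a uniformly random element of $S_\ell$ is regular. To make the induction close one then needs the quantitative estimate $r(\ell) \le 1/\ell + 2/\ell^2 + c/\ell^3$, so that $r(\ell)^2$ is sufficiently close to $1/\ell^2$; this extra factor is precisely why the constant and the base segment for $\rho$ ($k=30$, exact values for $n\le 35$, verification up to $n=180$) differ from those for $\kappa$.

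A secondary point: your plan to start from a double sum over ``compatible pairs of cycle types'' via the criterion of \cite{BritnellWildon} is not how the recurrence is obtained; that criterion is used only to compute the exact values of $\rho(S_n)$ for small $n$ needed in the base case. The recurrence itself is derived probabilistically, from the events that $\sigma$ and $\tau$ act regularly on prescribed $\ell$-sets and that their restrictions to the complements have conjugates that commute. Without the regularity lemma and the bound on $r(\ell)$, your induction has no valid recurrence to run on, so as it stands the proposal has a genuine gap.
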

\noindent

Again it is clear that the bound in Theorem~\ref{thm:conj_comm_uniform_bound}
is achieved when $n=10$, and so the constant $C_\rho$ is the best
possible. (Calculation shows that $C_\rho \approx 11.42747$; for
the exact value see Lemma~\ref{lemma:rho_comp}.) To make
the induction go through we require the exact values of
$\rho(S_n)$ for $n \le 35$; these were found using the
necessary and sufficient condition given in \cite[Proposition~4]{BritnellWildon}
for two conjugacy classes of the symmetric group to commute,
and the software already mentioned.

Our asymptotic result on $\rho(S_n)$ is as follows.

\begin{theorem}\label{thm:comm_conj_Plimit}
Let $A_\rho =
\sum_{n=1}^\infty \rho(S_n)$. Then $\rho(S_n) \sim
A_\rho/n^2$ as $n \rightarrow \infty$.
\end{theorem}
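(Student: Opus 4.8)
The plan is to mimic the proof of Theorem~\ref{thm:Plimit}, replacing equality of cycle type by the commuting relation and replacing the uniform bound of Theorem~\ref{thm:conjugate_uniform_bound} by that of Theorem~\ref{thm:conj_comm_uniform_bound}. First note that $A_\rho$ is well defined: since $0\le\rho(S_n)\le C_\rho/n^2$ and $\sum_n n^{-2}<\infty$, the series $\sum_n\rho(S_n)$ converges. Writing $\lambda\sim\mu$ to mean that the classes of $S_n$ labelled by the partitions $\lambda,\mu\vdash n$ commute, we have
\[
\rho(S_n)=\sum_{\lambda\sim\mu}\frac{1}{z_\lambda z_\mu},
\]
the sum running over ordered pairs. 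The whole point is to show that, after scaling by $n^2$, the mass of this sum concentrates on pairs of permutations sharing a single long cycle, and that removing this cycle sets up a recursion in $n$.

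Second, I would prove the structural reduction underlying the recursion. Suppose $\lambda$ has a part $\ell>n/2$; then this part is unique and occurs once. If $x$ has type $\lambda$ and $y$ has type $\mu$ with $xy=yx$, then $y$ permutes the cycles of $x$ and so fixes the support $S$ of the $\ell$-cycle setwise; hence $y|_S$ centralises an $\ell$-cycle and lies in the cyclic group of order $\ell$ that it generates. When in addition $\mu_1>n/2$, the same argument applied to $y$ shows that the two long cycles share the support $S$, forcing $\mu$ to contain the same part $\ell$; moreover $x|_{S^c}$ and $y|_{S^c}$ then commute, so the pair $(\lambda\setminus(\ell),\mu\setminus(\ell))$ of partitions of $n-\ell$ satisfies $\lambda\setminus(\ell)\sim\mu\setminus(\ell)$, and conversely every such pair arises. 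This is the content of the commuting criterion of \cite[Proposition~4]{BritnellWildon}. Since $z_\lambda=\ell\,z_{\lambda\setminus(\ell)}$ and likewise for $\mu$, summing over these aligned pairs gives the lower bound
\[
\rho(S_n)\ \ge\ \sum_{0\le k<n/2}\frac{\rho(S_k)}{(n-k)^2}.
\]

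Third, I would pass to the limit. Multiplying by $n^2$, each summand tends to $\rho(S_k)$ as $n\to\infty$, and for $0<k<n/2$ it is bounded by $4\rho(S_k)\le 4C_\rho k^{-2}$ (the $k=0$ term being equal to $1$), a summable majorant furnished by Theorem~\ref{thm:conj_comm_uniform_bound}. Dominated convergence therefore yields $\liminf_n n^2\rho(S_n)\ge\sum_{k\ge 0}\rho(S_k)=A_\rho$. For the matching upper bound, write $\rho(S_n)=\sum_{0\le k<n/2}\rho(S_k)/(n-k)^2+R_n$, where $R_n$ collects the remaining pairs: those in which both longest cycles have length at most $n/2$, together with those in which one permutation has a long cycle but its partner's restriction to $S$ is a proper power, breaking into several equal shorter cycles. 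If I can show $n^2R_n\to0$, the same dominated-convergence computation gives $\limsup_n n^2\rho(S_n)\le A_\rho$, and the two estimates prove $\rho(S_n)\sim A_\rho/n^2$.

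The main obstacle is precisely the estimate $n^2R_n\to0$, the analogue of the tail bound that makes the proof of Theorem~\ref{thm:Plimit} work. In every pair counted by $R_n$ at least one partition has all parts at most $n/2$, hence at least two cycles of length of order $n$ or a great many short cycles; in either case the corresponding $z_\lambda z_\mu$ grows faster than $n^2$. The cleanest route I see is to fix the length $\ell$ of the longest cycle occurring in the pair and to sum over admissible partners using \cite{BritnellWildon}: the balanced case ($\ell\le n/2$) and the misaligned case (an $\ell$-cycle of $x$ forcing $\ell/d$ cycles of length $d<\ell$ in $y$) each contribute at most $O(n^{-3})$ to $R_n$, giving $n^2R_n=O(n^{-1})$. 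Carrying out this bookkeeping uniformly in $n$ is the real work; it is somewhat more delicate than in the $\kappa$ case, since the commuting relation admits far more pairs than equality of cycle type does, so the reduction of \cite{BritnellWildon} has to be used quantitatively rather than merely qualitatively.
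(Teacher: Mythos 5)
Your lower bound and the deduction $\liminf_{n\to\infty} n^2\rho(S_n)\ge A_\rho$ are correct and essentially the paper's own: the structural analysis of a commuting pair in which both permutations have a cycle of length exceeding $n/2$ is exactly the content of the events $F(X,Y)$ in the proof of Proposition~\ref{prop:conj_commute_recurrence}, and dominated convergence is a harmless substitute for the choice $k=\lfloor 3n/4\rfloor$ in Proposition~\ref{prop:Pliminf1}. The genuine gap is in the upper bound: the claim $n^2R_n\to 0$ is asserted rather than proved, and the sketch offered for it fails for the ``balanced'' part of $R_n$, namely the pairs in which both $\sigma$ and $\tau$ have longest cycle of length at most $n/2$. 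That event by itself has probability tending to $(1-\log 2)^2>0$, so no argument based on cycle lengths alone can yield $O(n^{-3})$; and the heuristic that $z_\lambda z_\mu$ ``grows faster than $n^2$'' is insufficient, since for $\lambda$ with two parts of size $n/2$ one has $z_\lambda=n^2/2$ and the trivial bound $\sum_{\mu\sim\lambda}z_\mu^{-1}\le 1$ only gives a contribution of order $n^{-2}$. To do better one must control $\sum_{\mu\sim\lambda}z_\mu^{-1}$ for each balanced $\lambda$, i.e.\ use the commuting criterion of \cite{BritnellWildon} quantitatively at every scale --- which is precisely the bookkeeping you defer, and it amounts to re-running the recursion rather than a tail estimate.

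The paper closes this gap with three devices absent from your outline. First, Lemma~\ref{lemma:commute_condition} shows that permutations with commuting conjugates act \emph{regularly} on subsets $X$, $Y$ of common size $\ell$ at least the longest cycle length, with the restrictions to the complements again having conjugates that commute; this packages your ``misaligned'' case into the single quantity $r(\ell)\le 1/\ell+2/\ell^2+c/\ell^3$ (Lemma~\ref{lemma:ell-regular}) and yields the recursion $\rho(S_n)\le s_k(n)^2+\sum_{\ell\ge k}r(\ell)^2\rho(S_{n-\ell})$ of Proposition~\ref{prop:conj_commute_recurrence}. Second, the cut is made at $k=\lfloor n/\log n\rfloor$ rather than at $n/2$, so the doubly-short case is absorbed into $s_k(n)^2$, which is $o(n^{-2})$ by Proposition~\ref{prop:small_cycles}; with a cut at $n/2$ the corresponding term $s_{\lceil n/2\rceil}(n)^2$ does not tend to $0$. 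Third, the intermediate range $k\le\ell\le n-k$ is handled by feeding the already-established uniform bound $\rho(S_m)\le C_\rho/m^2$ of Theorem~\ref{thm:conj_comm_uniform_bound} into $\sum_\ell \ell^{-2}(n-\ell)^{-2}$ via Lemma~\ref{lemma:pfraclemma}, giving $O(\log n/n^3)$. With these three steps supplied your argument becomes the paper's; as written, the decisive estimate is missing.
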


\noindent
That $A_\rho$ is
well defined follows from Theorem~\ref{thm:conj_comm_uniform_bound}.
It follows from
Theorem~\ref{thm:conj_comm_uniform_bound} and the
exact value of $\sum_{m=0}^{30}\rho(S_m)$ given
in Lemma~\ref{lemma:rho_comp} that $6.1
<A_\rho < 6.5$.

The striking
similarity in the asymptotic behaviour of $\kSn$ and
$\rho(S_n)$ may be seen in the following corollary of
Theorems~\ref{thm:Plimit} and~\ref{thm:comm_conj_Plimit}
together with
the numerical estimates for $A_\kappa$ and~$A_\rho$ stated after these 
theorems.

\begin{corollary}
Let $\sigma$ and $\tau \in S_n$ be chosen independently and
uniformly at random.
If $\sigma$ and $\tau$ have conjugates
that commute, then,
provided~$n$ is sufficiently large, the probability that $\sigma$ and $\tau$ are conjugate is at least $42 / 65$.

\end{corollary}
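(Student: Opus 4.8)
The plan is to express the stated conditional probability as the ratio $\kappa(S_n)/\rho(S_n)$ and then apply the two asymptotic theorems. Write $A$ for the event that $\sigma$ and $\tau$ are conjugate, and $B$ for the event that they have conjugates that commute. The first step is to observe that $A \subseteq B$: if $\sigma$ and $\tau$ are conjugate then $\sigma$ is itself a conjugate of both $\sigma$ and $\tau$, and $\sigma$ commutes with $\sigma$. Hence the conditional probability in question is
\[
  \frac{\P(A)}{\P(B)} = \frac{\kappa(S_n)}{\rho(S_n)},
\]
directly from the definitions of $\kappa$ and $\rho$.

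Next I would invoke Theorems~\ref{thm:Plimit} and~\ref{thm:comm_conj_Plimit}, which give $\kappa(S_n) \sim A_\kappa/n^2$ and $\rho(S_n) \sim A_\rho/n^2$. Dividing these asymptotics shows that
\[
  \frac{\kappa(S_n)}{\rho(S_n)} \longrightarrow \frac{A_\kappa}{A_\rho}
  \quad\text{as } n \to \infty.
\]
It remains only to bound this limit from below. Using the numerical estimate $A_\kappa \approx 4.2634$ stated after Theorem~\ref{thm:Plimit} (so in particular $A_\kappa > 4.2$), together with the bound $A_\rho < 6.5$ recorded after Theorem~\ref{thm:comm_conj_Plimit}, we obtain
\[
  \frac{A_\kappa}{A_\rho} > \frac{4.2}{6.5} = \frac{42}{65}.
\]

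Finally, since $\kappa(S_n)/\rho(S_n)$ converges to a limit strictly exceeding $42/65$, the ratio is greater than $42/65$ for all sufficiently large $n$, which is precisely the assertion of the corollary. The argument is short; the only point requiring any care is the containment $A \subseteq B$, which justifies computing the conditional probability as $\kappa(S_n)/\rho(S_n)$. Everything else is a routine application of the previously established asymptotics and the recorded numerical bounds on $A_\kappa$ and $A_\rho$, and I do not anticipate any genuine obstacle.
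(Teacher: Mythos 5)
Your proposal is correct and is essentially the argument the paper intends: the corollary is stated as following from Theorems~\ref{thm:Plimit} and~\ref{thm:comm_conj_Plimit} together with the numerical estimates, and your reduction of the conditional probability to $\kappa(S_n)/\rho(S_n)$ via the containment $A\subseteq B$, followed by $A_\kappa/A_\rho > 4.2/6.5 = 42/65$, is exactly that. (If one wanted to be fully self-contained about the lower bound $A_\kappa > 4.2$, it follows rigorously from a finite partial sum of the positive series $\sum \kappa(S_n)$ rather than from the quoted decimal expansion, but this is a cosmetic point.)
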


\subsubsection*{Background.}
The paper \cite{FlajoletEtAl} by Flajolet \emph{et al} contains
the only prior work in this area of which the authors are aware.
Besides their proof
of Theorem~\ref{thm:Plimit}, they also
show in their Proposition~4 that
$\kSn =
A_\kappa/n^2 + \mathrm{O}\bigl( (\log n)/n^3 \bigr)$. This result
of course implies our Theorem~\ref{thm:conjugate_uniform_bound}.
Their proof does not, however, lead to explicit
bounds on $\kSn$; nor can their methods
be applied to $\rho(S_n)$. It therefore appears to be difficult
to give a more precise estimate for the constant~$A_\rho$ in
Theorem~\ref{thm:comm_conj_Plimit}. The integer
sequence $n!^2 \kappa(S_n)$ is A087132 in the
On-line Encyclopedia of Integer Sequences \cite{OEIS};
the sequence $n!^2 \rho(S_n)$ now appears as A192983.

The probabilities measured by $\kSn$ and $\rho(S_n)$ depend
only on the cycle types of $\sigma$ and $\tau$, and so these theorems may be regarded
as statements about the cycle statistics of a random permutation.
Rather than attempt to do justice to the enormous literature
on these cycle statistics, we shall merely recall some
earlier results relevant to our proofs.

It is critical to the success of our approach
that if $n$ is large
compared with $k$, then almost all permutations in $S_n$ contain
a cycle of length at least~$k$. The explicit bounds we require
are given in \S\ref{sec:short_cycles} below.
Somewhat weaker estimates
can be deduced from a
fundamental result, due to Goncharov \cite{Goncharov1},
which states that
if $X_i$ is the number of
$i$-cycles of a permutation in $S_n$ chosen uniformly
at random, then as $n$ tends to infinity, the
limiting distribution of the $X_i$ is as
independent Poisson
random variables, with $X_i$ having mean $1/i$.

It should also be noted that if
$L_n$ is the random variable whose value is the longest cycle of
a permutation $\sigma_n$
chosen uniformly at random from $S_n$, then,
as shown in \cite{Goncharov2}, the distribution of
$L_n/n$ tends to a limit.
The moments of the limiting distribution
of the $r$th longest cycle in~$\sigma_n$ were calculated by Shepp and Lloyd in
\cite{SheppLloyd}, who also showed that
the limit of  $\E (L_n/n)$ as $n\rightarrow \infty$
is approximately~$0.62433$.
The reader is referred to
Lemma~5.7 and (1.36)
in \cite{ArratiaEtAl} for an interesting formulation of these results
in terms of the Poisson--Dirichlet distribution.

\subsection{Structure of the paper}
\label{sec:outline}

The remainder of the paper is structured as follows. In \S\ref{sec:lowergap}
we prove Theorem~\ref{thm:lowergap}. In \S\ref{sec:uppergap_preliminaries}
we prove the preliminary lemmas needed for Theorem~\ref{thm:uppergap};
this theorem is then proved in \S\ref{sec:uppergap}. The isoclinism
result in Theorem~\ref{thm:isoclinism} is proved in \S\ref{sec:isoclinism}.

The second half of the paper on symmetric groups begins
with \S\ref{sec:short_cycles} where we give the bounds we require
on the probability that a permutation of $S_n$, chosen uniformly
at random, has only cycles of length strictly less than a fixed length $k$.
In \S\ref{sec:ineq} we establish a recursive bound on~$\kSn$ that is
critical to our approach;
in \S\ref{sec:kappa_upper_bound} we use this bound to prove
Theorem~\ref{thm:conjugate_uniform_bound}.
The asymptotic result on $\kSn$
in Theorem~\ref{thm:Plimit} is proved in \S\ref{sec:kappa_limit}.
To prove the analogous results on $\rho(S_n)$ stated in
 Theorems~\ref{thm:conj_comm_uniform_bound}
and~\ref{thm:comm_conj_Plimit}
we need some
further bounds, which we collect in~\S\ref{sec:comm_conj_bounds_for_permutations}.
The proofs of these theorems are then given in~\S\ref{sec:rho_proofs}.

Some final remarks and open problems
are presented in~\S\ref{sec:final}.

\section{Proof of Theorem~\ref{thm:lowergap}}
\label{sec:lowergap}

Let $G$ be a finite non-abelian group.
The contribution of the
central elements of $G$ to the sum in~\eqref{eq:kappa} is
$|Z(G)|/|G|^2$. If the non-central classes have sizes $a_1$, \ldots,
$a_\ell$ then they contribute $(a_1^2 + \cdots +
a_\ell^2)/|G|^2$.
Ignoring any divisibility restrictions on the~$a_i$
for the moment, we see that since $(b+c)^2 > b^2 + c^2$ for all $b$,
$c \in \N$, the sum $a_1^2 + \cdots + a_\ell^2$
 takes its minimum value when $a_i \le 3$ for all
$i$. Moreover, since $3^2 + 3^2 > 2^2 + 2^2 + 2^2$, we have
$a_i=3$ for at most one $i$ at this minimum value. Therefore
$a_1^2 + \cdots + a_\ell^2 \ge 2^2(|G| - |Z(G)|)/2$, with
equality exactly when $a_i=2$ for all $i$.
It
follows that
\[ \kappa(G) \ge \frac{|Z(G)|}{|G|^2} +
\frac{2^2}{|G|^2} \Bigl( \frac{|G|-|Z(G)|}{2}\Bigr) 
 = \frac{1}{|G|} \Bigl( 2 - \frac{|Z(G)|}{|G|} \Bigr) .\]
Since $G/Z(G)$ is non-cyclic, we have $|Z(G)| \le |G|/4$. Hence
$\kappa(G) \ge 7/4|G|$, and equality holds if and only if $Z(G)$
has index $4$ in $G$ and every non-central conjugacy class
has size $2$. However if $G$ is any group such
that $Z(G)$ has index $4$ in $G$
then, for any $g \in G \backslash Z(G)$, the
subgroup $\Cent_G(g) = \left<Z(G), g\right>$ has index $2$ in $G$.
Hence the condition that $Z(G)$ has index $4$ in $G$
is both necessary and sufficient.
This completes the proof of Theorem~\ref{thm:lowergap}.

\section{Preliminaries for the proof of Theorem~\ref{thm:uppergap}}
\label{sec:uppergap_preliminaries}
We begin the proof of Theorem~\ref{thm:uppergap} by collecting the three
preliminary lemmas we need. Of these, Lemma~\ref{lemma:quo} immediately
below is key. We give a more general version of this lemma
in~\S\ref{sec:final} at the end of the paper.

\begin{lemma}\label{lemma:quo}
Let $G$ be a finite group. If $N$
is a non-trivial normal subgroup of $G$ then
$\kappa(G) < \kappa(G/N)$.
\end{lemma}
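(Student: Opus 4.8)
The plan is to compare the class-size structure of $G$ with that of $G/N$ directly, using the formula for $\kappa$ in terms of class sizes given in~\eqref{eq:kappa}. The key observation is that the conjugacy classes of $G/N$ are unions of images of conjugacy classes of $G$: if $\bar{g}$ denotes the image of $g$ in $G/N$, then the class $\bar{g}^{G/N}$ is the image of the union of all $G$-classes contained in the coset-union $g^G N$. So the natural approach is to group the classes of $G$ according to the class of $G/N$ into which they map, and show that the contribution of each such group of classes to $\kappa(G)$ is strictly less than the contribution of the corresponding single class to $\kappa(G/N)$.

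To make this precise I would fix a class representative $\bar{x}_1,\ldots,\bar{x}_m$ for the classes of $G/N$, and for each $j$ let $S_j$ be the set of $G$-classes whose image lies in $\bar{x}_j^{G/N}$. Writing the classes in $S_j$ as having sizes $c_{j,1},\ldots,c_{j,r_j}$, the relevant inequality to establish for each $j$ is
\[
\sum_{i=1}^{r_j} \frac{c_{j,i}^2}{|G|^2} \;\le\; \frac{|\bar{x}_j^{G/N}|^2}{|G/N|^2},
\]
and then to sum over $j$. The right-hand side, when expressed in terms of $G$, counts ordered pairs of elements of $G$ whose images in $G/N$ are conjugate; the left-hand side counts ordered pairs that are genuinely conjugate in $G$. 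Since conjugate elements certainly have conjugate images, every pair counted on the left is counted on the right, giving $\kappa(G) \le \kappa(G/N)$ immediately. The cleanest way to organise this is probably to define, on $G$, the relation ``$g$ and $h$ map to conjugate elements of $G/N$'' and to recognise that $\kappa(G/N)$ is exactly the probability of this relation for uniformly chosen $g,h \in G$, since the fibres of the quotient map all have size $|N|$ and so uniform choice in $G$ pushes forward to uniform choice in $G/N$.

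Having obtained the non-strict inequality $\kappa(G) \le \kappa(G/N)$, the remaining task is to show it is strict when $N$ is non-trivial. Here the point is that the event ``$g,h$ conjugate in $G$'' is a strict subset of the event ``$\bar{g},\bar{h}$ conjugate in $G/N$'' whenever $N \ne 1$: for instance, taking $g$ to be any non-identity element of $N$ and $h$ the identity, the images $\bar{g} = \bar{h}$ are conjugate in $G/N$, but $g$ and $h$ are not conjugate in $G$ since the identity is in a class by itself. This exhibits at least one ordered pair counted on the right but not on the left, forcing the inequality to be strict.

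The main obstacle, and the part requiring the most care, is the bookkeeping that identifies $\kappa(G/N)$ with a genuine probability over $G$ via the pushforward of the uniform measure. One must check that the fibre of each element of $G/N$ under the quotient map has size exactly $|N|$ (which is immediate) and, more substantively, that a $G$-class $g^G$ maps onto the whole of $\bar{g}^{G/N}$ rather than a proper subset, so that the grouping into the sets $S_j$ is genuinely a partition of the classes of $G$ with the images covering each target class. This surjectivity of conjugacy onto the quotient is standard but should be stated explicitly; once it is in place, the counting argument and the strictness both follow cleanly.
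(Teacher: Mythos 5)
Your proposal is correct and is essentially the paper's own argument: both compare the set of ordered pairs conjugate in $G$ with the strictly larger set of pairs whose images are conjugate in $G/N$, use the fact that uniform choice in $G$ pushes forward to uniform choice in $G/N$, and obtain strictness from the pair $(g,1)$ with $g$ a non-identity element of $N$. The class-by-class bookkeeping you describe (and the worry about surjectivity of a class onto its image) is unnecessary once the pair-counting formulation is in place, and the paper dispenses with it.
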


\begin{proof}
Writing $\sim$ for the conjugacy relation, we have
\[ \{(g,h) \in G \times G : g \sim h \} \subset
   \{(g,h) \in G \times G : gN \sim hN \}.
\]
The inclusion is strict, because if $g$ is a non-identity
element of~$N$ then $(g,1)$ lies in the right-hand set but not the
left-hand set.  The proportion of pairs $(g,h) \in G \times G$ lying
in the smaller set is $\kappa(G)$. When $g$, $h \in G$ are chosen
independently and uniformly at random, $gN$ and $hN$ are
independently and uniformly distributed across the elements of $G/N$.
So the proportion of pairs $(g,h) \in G \times G$ lying
in the larger set is $\kappa(G/N)$. Hence $\kappa(G) < \kappa(G/N)$.\quad
\end{proof}

We shall also need a straightforward result on
the conjugacy probability in Frobenius groups.
Recall that a transitive
permutation group~$G$ acting faithfully on a finite set $\Omega$
is said to be a \emph{Frobenius group} if each non-identity element of $G$
has at most one fixed point. It is well known (see for example
\cite[Ch.~4, Theorem~5.1]{Gorenstein}) that if $G$ is a Frobenius group with point
stabiliser~$H$ then $G$ has a regular
normal subgroup~$K$ such that $G = KH$.
The subgroup $K$ is known as the \emph{Frobenius kernel} of~$G$.
Any non-identity
element of $H$ acts without fixed points on $K$, and so by
a famous theorem of Thompson
(see \cite[Theorem~1]{Thompson} or \cite[Ch.~10, Theorem~2.1]{Gorenstein}),
$K$ is nilpotent.

\begin{lemma}\label{lemma:frobenius}
If $G$ is a Frobenius group with point stabiliser $H$ and Frobenius
kernel~$K$ then

\[ \kappa(G) = \frac{1}{|G|^2} + \frac{1}{|H|} \Bigl(
\kappa(K) - \frac{1}{|K|^2} \Bigr) +
\Bigl( \kappa(H) - \frac{1}{|H|^2} \Bigr). \]
\end{lemma}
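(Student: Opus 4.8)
The plan is to compute $\kappa(G)$ directly from the formula in~\eqref{eq:kappa} by summing the contributions $1/|\Cent_G(g_i)|^2$ over a set of conjugacy class representatives $g_i$, grouping these representatives according to the Frobenius structure $G = KH$. Every element of $G$ is either the identity, a non-identity element of the kernel $K$, or lies in some conjugate of the complement $H$; these three possibilities organise the sum into the three terms on the right-hand side. First I would recall the standard structural facts about Frobenius groups: the kernel $K$ is normal with $|K| = [G:H]$, the $|K|$ conjugates of $H$ intersect pairwise in the identity, and their non-identity elements together with the non-identity elements of $K$ partition $G \setminus \{1\}$. I would also use that conjugation by elements of $K$ permutes the complements transitively, so the conjugacy classes of $G$ meeting $H \setminus \{1\}$ correspond to $K$-orbits, which combine nicely with the $H$-conjugacy.

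The main computation I would carry out separates into understanding centralizers in $G$ for each type of element. For the identity, $\Cent_G(1) = G$, contributing $1/|G|^2$, which accounts for the first term. For a non-identity element $k \in K$, since $K$ is normal and abelian-like enough that its centralizer in $G$ is controlled by the fixed-point-free action of $H$, one finds $\Cent_G(k) = \Cent_K(k)$, so the conjugacy class of $k$ in $G$ is $|H|$ times larger than its class in $K$; summing $1/|\Cent_G(k)|^2$ over the non-identity $K$-classes and relating this to $\kappa(K) - 1/|K|^2$ yields the factor $1/|H|$ in the second term. For a non-identity element $h \in H$, the key point is that its $G$-centralizer lies inside a single complement, giving $\Cent_G(h) = \Cent_H(h)$, and the $G$-class of $h$ is the union of the $K$-translates of its $H$-class; a counting argument shows the contribution of these classes matches $\kappa(H) - 1/|H|^2$, the third term.

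The hard part will be the bookkeeping for the complement elements: I must verify carefully that $\Cent_G(h) = \Cent_H(h)$ for $h \in H \setminus \{1\}$, which uses that $h$ acts fixed-point-freely on $K$ so no non-identity element of $K$ can commute with $h$, and that distinct $H$-classes in $H \setminus \{1\}$ do not fuse in $G$ beyond what the $K$-action forces. Concretely, if $g = kh'$ centralizes $h$ with $k \in K$, $h' \in H$, I would show $k = 1$ by exploiting the semidirect product action and the fixed-point-free property. Once the centralizer identities are established, the remaining steps are the routine algebraic manipulations of rewriting each block of the sum $\sum 1/|\Cent_G(g_i)|^2$ in terms of $\kappa(K)$ and $\kappa(H)$, subtracting the identity contributions $1/|K|^2$ and $1/|H|^2$ that have already been absorbed into the first term, and collecting the three pieces into the stated closed form. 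I expect the centralizer and class-fusion analysis to be the only genuinely delicate step; everything downstream is substitution into~\eqref{eq:kappa}.
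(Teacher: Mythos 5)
Your proposal is correct and follows essentially the same route as the paper's proof: both decompose the class representatives into the identity, the non-identity classes contained in $K$, and the classes meeting $H$, establish $\Cent_G(k)=\Cent_K(k)$ and $\Cent_G(h)=\Cent_H(h)$ via the fixed-point-free action, observe that each $G$-class in $K$ splits into $|H|$ $K$-classes while the $G$-classes outside $K$ biject with the $H$-classes of $H$, and then substitute into~\eqref{eq:kappa}. The only delicate point you flag (ruling out kernel components in centralizers of complement elements) is handled in the paper by noting that the conjugates of $H$ by distinct elements of $K$ meet only in the identity, which is equivalent to your semidirect-product calculation.
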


\begin{proof}
It follows from standard properties of Frobenius groups, see for
example \cite[Lemma~7.3]{Isaacs} that
\begin{equation}
\label{eq:decomp}
 G = K \cup \bigcup_{g \in K} (H^g \backslash \{ 1\})
\end{equation}
where the union is disjoint.
Every conjugacy class of $G$
is either contained in $K$, or has a representative in $H$.
Let $g_1, \ldots, g_r \in K$ be representatives
for the non-identity conjugacy classes contained in $K$,
and let $h_1, \ldots, h_s \in H$ be
representatives for the remaining non-identity conjugacy
classes of $G$.
By~\eqref{eq:kappa} in \S1 we have
\begin{equation}
\label{eq:frob_kappa_proof}
\kappa(G) = \frac{1}{|G|^2} + \sum_{i=1}^r
\frac{1}{|\Cent_G(g_i)|^2}
+ 
\sum_{i=1}^s \frac{1}{|\Cent_G(h_i)|^2}.
\end{equation}

Since $H$ acts without fixed points on $K$,
no two non-identity elements of $H$ and $K$ can commute.
Hence $\Cent_G(g) = \Cent_K(g)$ for
each $g \in K$. Therefore,
when the conjugacy action is restricted to~$K$,
each $g_i^G$ splits into $|H|$ disjoint conjugacy classes of $K$.
Moreover, \eqref{eq:decomp} shows that
any pair of conjugates of $H$ by distinct elements of $K$
intersect only in the identity.
Hence  $\Cent_G(h) = \Cent_H(h)$ for each $h \in H$
and the $G$-conjugacy classes in $G \setminus K$ are
in bijection with the $H$-conjugacy classes in $H$.
We therefore have
\begin{align*}
\kappa(K) &= \frac{1}{|K|^2} + |H|\sum_{i=1}^r \frac{1}{|\Cent_G(g_i)|^2}, \\
\kappa(H) &= \frac{1}{|H|^2} + \sum_{i=1}^s \frac{1}{|\Cent_G(h_i)|^2}.
\end{align*}
The lemma now follows from \eqref{eq:frob_kappa_proof}
using these equations.
\end{proof}

To state our final preliminary lemma we
shall need the \emph{majorization} (or dominance)
order, denoted $\succeq$, which is defined on $\R^k$ by setting
\[ (x_1, x_2, \ldots, x_k) \succeq (y_1, y_2, \ldots, y_k) \]
if and only if
$\sum_{i=1}^j x_i \ge \sum_{i=1}^j y_i$
for all $j$ such that $1 \le j \le k$.

\begin{lemma}\label{lemma:ineq}
Let $x$, $y \in \R^k$ be
decreasing $k$-tuples of real numbers such that
$\sum_{i=1}^k x_i
= \sum_{i=1}^k y_i = 1$. Suppose that $x \succeq y$.
Then
$\sum_{i=1}^k x_i^2 \ge \sum_{i=1}^\ell y_i^2$,
and equality holds if and only if $x=y$.
\end{lemma}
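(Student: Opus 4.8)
The plan is to recognize the inequality $\sum_{i=1}^k x_i^2 \ge \sum_{i=1}^k y_i^2$ (the $\ell$ in the displayed statement should read $k$) as the special case $\phi(t) = t^2$ of the classical majorization inequality of Hardy, Littlewood and P\'olya (Karamata's inequality): if $x \succeq y$ with equal total sums and $\phi$ is convex, then $\sum \phi(x_i) \ge \sum \phi(y_i)$. Rather than cite this, I would give a direct and self-contained argument by summation by parts, since this keeps the analysis of the equality case entirely transparent.

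First I would record partial sums. Write $S_j = \sum_{i=1}^j x_i$ and $T_j = \sum_{i=1}^j y_i$, and set $D_j = S_j - T_j$. The hypothesis $x \succeq y$ says exactly that $D_j \ge 0$ for $1 \le j \le k$, while the equal-sum condition gives $D_0 = D_k = 0$. Factoring $x_i^2 - y_i^2 = (x_i + y_i)(x_i - y_i)$ and writing $a_i = x_i + y_i$, an application of Abel summation (using $x_i - y_i = D_i - D_{i-1}$ together with $D_0 = D_k = 0$) yields
\[ \sum_{i=1}^k (x_i^2 - y_i^2) = \sum_{i=1}^{k-1} (a_i - a_{i+1}) D_i. \]
Each factor on the right is nonnegative: $D_i \ge 0$ by majorization, and $a_i - a_{i+1} = (x_i - x_{i+1}) + (y_i - y_{i+1}) \ge 0$ because $x$ and $y$ are both decreasing. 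Hence the whole sum is nonnegative, which is the desired inequality.

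The hard part will be the equality case, and this is where the summation-by-parts form pays off. Equality forces $(a_i - a_{i+1})D_i = 0$ for every $i$. Supposing $x \ne y$, I would let $p$ be the least index with $x_p \ne y_p$; then $D_{p-1} = 0$, so $D_p = x_p - y_p$, and $D_p \ge 0$ forces $x_p > y_p$, whence $D_p > 0$. The vanishing of the $p$-th term then forces $a_p = a_{p+1}$, and since $a_p - a_{p+1}$ is a sum of the two nonnegative quantities $x_p - x_{p+1}$ and $y_p - y_{p+1}$, both must vanish: $x_{p+1} = x_p$ and $y_{p+1} = y_p$. Thus $x_{p+1} > y_{p+1}$ and $D_{p+1} > 0$, and iterating this argument up to $i = k-1$ shows $x_j = x_p > y_p = y_j$ for all $j \ge p$. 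But then $D_k = \sum_{j \ge p}(x_j - y_j) > 0$, contradicting $D_k = 0$. Therefore $x = y$, completing the equality analysis.
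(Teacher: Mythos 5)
Your proof is correct, including the observation that the $\ell$ in the statement is a typo for $k$. The paper disposes of this lemma in one line, by citing Karamata's majorization inequality for the convex function $f(t)=t^2$; you have instead unrolled the standard proof of that inequality in the quadratic case. The two arguments are really the same mathematics --- in the general Karamata proof one Abel-sums against the divided differences $\bigl(f(x_i)-f(y_i)\bigr)/(x_i-y_i)$, which for $f(t)=t^2$ are exactly your $a_i=x_i+y_i$, and monotonicity of these differences is where convexity of $f$ enters --- but your version is self-contained and, more importantly, makes the equality analysis explicit. The paper's citation implicitly relies on the strict-convexity refinement of Karamata (equality forces $x$ and $y$ to agree as multisets, hence as decreasing tuples), whereas your induction on the least index $p$ with $x_p\ne y_p$, propagating $D_i>0$ and constancy of the tails to contradict $D_k=0$, proves exactly what is needed with no appeal to an external equality criterion. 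That is a worthwhile gain, since the equality case is what the paper actually uses in Proposition~\ref{prop:class} to rule out $|G|>4$ when $c_1(G)>3$.
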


\begin{proof}
This follows from Karamata's inequality
(see \cite[page 148]{Karamata}) for the function
\hbox{$f(x) = x^2$}.
\end{proof}

For notational convenience, we may suppress a sequence
of final zeros when writing elements of $\R^k$.

\bigskip
\section{Proof of Theorem~\ref{thm:uppergap}}
\label{sec:uppergap}

We prove Theorem~\ref{thm:uppergap} by using Lemma~\ref{lemma:ineq} to
give a fairly strong restriction on the centralizer sizes in a finite
group $G$ such that $\kappa(G) \ge 1/4$. The groups falling into each case
in Proposition~\ref{prop:class} below
are then classified using Lemmas~\ref{lemma:quo}
and~\ref{lemma:frobenius}.  From case (i) we get the infinite family
in Theorem~\ref{thm:uppergap}, and from case (ii) we get $A_4$ and
$C_3 \rtimes C_7$.  The unique groups in cases~(iii) and~(iv)
are $S_4$ and $A_5$, respectively.

We shall denote by
$c_i(G)$ the size of the $i$th smallest centralizer in a finite group~$G$.

\begin{proposition}\label{prop:class}
If $G$ is a finite group such that $\kappa(G) \ge 1/4$ then
either~$|G| \le 4$, or one of the following holds:
\begin{thmlist}
\item $c_1(G) = 2$;
\item $c_1(G) = c_2(G) = 3$;
\item $c_1(G) = 3$ and $c_2(G) = c_3(G) = 4$;
\item $c_1(G) = 3$, $c_2(G) = 4$ and $c_3(G) = c_4(G) = 5$.
\end{thmlist}
\end{proposition}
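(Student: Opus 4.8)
The key insight is that $\kappa(G) = \sum_i 1/|\Cent_G(g_i)|^2$ expands as a sum over conjugacy classes, and the centralizer of $g_i$ always contains the centralizer chain determined by the $c_j(G)$. The plan is to show that if the centralizer sizes $c_1(G), c_2(G), \ldots$ grow too quickly, the sum cannot reach $1/4$. I would begin by recording two structural facts that constrain the possible tuples $(c_1, c_2, \ldots)$. First, $c_1(G) = 1$ forces $G$ trivial, so $c_1(G) \ge 2$ for non-trivial $G$; and the identity class contributes $1/|G|^2$, which is negligible. Second, and crucially, each centralizer size $c_i(G)$ divides $|G|$, and the values recur with multiplicity equal to the class size: a class of size $|g^G| = |G|/|\Cent_G(g)|$ contributes $1/|\Cent_G(g)|^2$, so a centralizer of size $c$ contributes a block of $|G|/c$ equal terms, each equal to $1/c^2$, i.e. total $1/(c|G|)$ from any single conjugacy class of that centralizer order. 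Summing over all classes gives $\sum_g 1/|\Cent_G(g)| \cdot |g^G| = \sum_g 1 = k(G)$ relations, but more usefully $\sum_i |G|/c_i \cdot (1/c_i^2)$ interpretations.

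First I would set up the bound via Lemma~\ref{lemma:ineq}. Let the distinct centralizer orders occurring in $G$ be $c_1 \le c_2 \le \cdots$ (with repetitions as appropriate), and observe that the vector of class sizes, normalized to sum to $1$, is majorized by any vector with the same support but more concentrated mass. Concretely, since $\sum_i 1/|\Cent_G(g_i)| = 1$ is forced (the class sizes over $|G|$ sum to $1$), I would apply Lemma~\ref{lemma:ineq} with $x$ the normalized class-size vector and $y$ a suitable comparison vector built from the smallest admissible centralizer sizes, to deduce that $\kappa(G) = \sum 1/|\Cent_G(g_i)|^2$ is at least the analogous sum for the extremal configuration. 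The hypothesis $\kappa(G) \ge 1/4$ then becomes an inequality on the first few $c_i(G)$: if the smallest centralizers were all large, the quadratic sum would fall below $1/4$.

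Next I would carry out the case analysis by bounding the partial sums. Writing $\kappa(G) = \sum_{i} 1/c_i(G)^2$ over a complete list (with multiplicity) of centralizer orders, I would note $\sum_i 1/c_i(G) = 1$ and use this constraint together with $\kappa(G) \ge 1/4$. The heart is a finite enumeration: if $c_1(G) \ge 4$, then since each term $1/c_i^2 \le (1/4) \cdot (1/c_i)$ and $\sum 1/c_i = 1$, we get $\kappa(G) \le 1/4$ with equality analysis forcing the boundary; pushing this, $c_1(G) \ge 5$ gives $\kappa(G) < 1/4$ outright, so $c_1(G) \in \{2,3,4\}$. I would then split on $c_1(G)$. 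The value $c_1(G) = 4$ can be ruled out (it would need many classes of centralizer order exactly $4$, but then $\sum 1/c_i$ overshoots or the group is small, i.e. $|G| \le 4$); $c_1(G) = 2$ is case~(i); and $c_1(G) = 3$ branches on $c_2(G)$, where $c_2(G) = 3$ gives~(ii) and $c_2(G) = 4$ forces further constraints on $c_3(G)$ yielding cases~(iii) and~(iv), again by tracking how much of the total mass $\sum 1/c_i = 1$ remains and how large the quadratic sum can be.

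\textbf{The main obstacle.}
The hard part will be the bookkeeping in the $c_1(G) = 3$ branch, where several small centralizer orders can coexist and one must show that only the listed tuples $(3,3)$, $(3,4,4)$, and $(3,4,5,5)$ are compatible with both $\sum_i 1/c_i(G) = 1$ and $\kappa(G) \ge 1/4$. The delicacy is that these two constraints pull in opposite directions—the linear constraint caps how small centralizers can be, while the quadratic lower bound forces them to be small—so closing off the finitely many borderline tuples (and confirming that the excluded ones genuinely give $\kappa(G) < 1/4$) requires careful tracking of partial sums rather than a single clean inequality. I expect divisibility of each $c_i(G)$ into $|G|$ to be what ultimately eliminates the remaining spurious candidates, so I would keep that arithmetic constraint available throughout.
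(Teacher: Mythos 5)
Your proposal follows essentially the same route as the paper: normalize the class sizes so that $\sum_i 1/c_i(G) = 1$, apply the majorization lemma (Lemma~\ref{lemma:ineq}) to compare $\r(G)$ with an extremal configuration, and case-split on $c_1(G),\ldots,c_4(G)$; the paper carries out the bookkeeping you defer by exhibiting explicit majorizing vectors such as $(\tfrac{1}{3},\tfrac{1}{5},\tfrac{1}{5},\tfrac{1}{5},\tfrac{1}{15})$ and computing their sums of squares, and it needs no divisibility input. One caution: you want the comparison vector to \emph{majorize} $\r(G)$, which by Lemma~\ref{lemma:ineq} gives $\kappa(G) \le \sum_i y_i^2$ (an upper bound that contradicts $\kappa(G)\ge 1/4$), not ``at least the analogous sum'' as written --- your own following clause (``the quadratic sum would fall below $1/4$'') has the correct direction, as does your elementary estimate $1/c_i^2 \le \tfrac{1}{4}\cdot 1/c_i$ for the case $c_1(G)\ge 4$.
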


\begin{proof}
Let $k$ be the number of conjugacy classes of $G$ and
let 
\[ \r(G) = (1/c_1(G), 1/c_2(G), \ldots, 1/c_{k}(G)). \]
Since the size of the $i$th largest conjugacy
class of $G$ is $|G|/c_i(G)$, we have $\sum_{i=1}^k 1/c_i(G) = 1$.

If $c_1(G) > 3$ then, using the notational convention
established at the end of \S\ref{sec:uppergap_preliminaries}, we have
$(\frac{1}{4}, \frac{1}{4}, \frac{1}{4}, \frac{1}{4})
\succeq \r(G)$.
Hence either $|G| = 4$ or, by Lemma~\ref{lemma:ineq} and~\eqref{eq:kappa}
we have
\[ \kappa(G) = \sum_{i=1}^k \frac{1}{c_i(G)^2} < 1/4. \]

If $c_1(G)=2$, then $G$ lies in case~(i). We may therefore assume that
$c_1(G) = 3$. The remainder of the argument proceeds along similar
lines: if $c_2(G) > 4$ then $(\frac{1}{3}, \frac{1}{5}, \frac{1}{5},
\frac{1}{5}, \frac{1}{15}) \succeq \r(G)$, and by Lemma 9 we have
$\kappa(G) \le 53/255 < 1/4$.  Hence either $G$ lies in case~(ii)
or \hbox{$c_2(G) = 4$}. Assume that $c_2(G)=4$. If $c_3(G) >
5$ then $(\frac{1}{3}, \frac{1}{4}, \frac{1}{6}, \frac{1}{6},
\frac{1}{12}) \succeq \r(G)$ and we have $\kappa(G) \le 17/72 <
1/4$. Therefore either $c_3(G) = 4$, and $G$ lies in case (iii), or
\hbox{$c_3(G) = 5$}.  In this final case, if $c_4(G) > 5$ then
$(\frac{1}{3}, \frac{1}{4}, \frac{1}{5}, \frac{1}{6}, \frac{1}{20})
\succeq \r(G)$ and we have $\kappa(G) \le 439/1800 < 1/4$. Therefore
$c_4(G) = 5$, and $G$ lies in case~(iv).
\end{proof}

\subsection{Self-centralizing involutions}
If $G$ lies in the first case of Proposition~\ref{prop:class} then
$G$ contains an element $t$ such that $|C_G(t)| = 2$. Since $\left<t\right>
\le C_G(t)$ we see that $t$ is a self-centralizing involution.

\begin{proposition}\label{prop:scinvol}
Suppose
that $G$ is a finite group and that $t \in G$
is a self-centralizing involution.
Then $\kappa(G) = \frac{1}{4} + \frac{1}{|G|}  -\frac{1}{|G|^2}$
and $G$ has a normal abelian subgroup $A$ of odd order
such that $|G : A| = 2$ and $t$ acts on $A$ by mapping
each element of $A$ to its inverse.
\end{proposition}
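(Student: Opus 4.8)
The plan is to show that $G$ is a Frobenius group with complement $\langle t\rangle$, and then to read off both the structural statement and the value of $\kappa(G)$ from the theory of Frobenius groups together with Lemma~\ref{lemma:frobenius}.

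First I would establish the Frobenius structure. Since $t$ is the unique non-identity element of $\langle t\rangle$, any element normalising $\langle t\rangle$ must centralise $t$; hence $N_G(\langle t\rangle) = \Cent_G(t) = \langle t\rangle$, so $\langle t\rangle$ is self-normalising. As $\langle t\rangle$ has order $2$, any two distinct conjugates of $\langle t\rangle$ intersect trivially. Considering the action of $G$ on the cosets of $\langle t\rangle$, a non-identity element fixing two distinct cosets would lie in two distinct (hence trivially intersecting) conjugates of $\langle t\rangle$, or else would force the two cosets to coincide via $N_G(\langle t\rangle)=\langle t\rangle$; either way each non-identity element fixes at most one coset, so $G$ is a Frobenius group with complement $\langle t\rangle$. (The degenerate case $|G|=2$ is immediate, and I would dispose of it separately.) By the standard existence theorem for the Frobenius kernel, $G$ then has a normal subgroup $A$ with $G = A \rtimes \langle t\rangle$ and $|A| = |G|/2$, and the non-identity elements of $G$ lying in conjugates of the complement, namely $t^G$, are exactly $G \setminus A$.

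Next I would extract the action of $t$ on $A$. Because $A$ has index $2$, the coset $tA$ equals $G \setminus A = t^G$; thus for every $a \in A$ the element $ta$ is conjugate to $t$ and in particular is an involution. Expanding $(ta)^2 = 1$ gives $tat = a^{-1}$, so $t$ inverts $A$. It follows at once that $A$ is abelian (comparing $t(ab)t = (ab)^{-1}$ with $(tat)(tbt) = a^{-1}b^{-1}$), and that $A$ has odd order, since any involution in $A$ would be fixed by $t$ and hence lie in $\Cent_G(t) = \langle t\rangle$, which meets $A$ trivially. This settles the structural half of the proposition.

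Finally I would compute $\kappa(G)$ using Lemma~\ref{lemma:frobenius} with kernel $K = A$ and complement $H = \langle t\rangle$. Both are abelian, so $\kappa(A) = 1/|A|$ and $\kappa(\langle t\rangle) = 1/2$; substituting these together with $|H| = 2$ and $|A| = |G|/2$ into the formula of Lemma~\ref{lemma:frobenius} yields $\kappa(G) = \tfrac14 + 1/|G| - 1/|G|^2$ after routine simplification. I expect the only real obstacle to be the first step, namely verifying cleanly that the self-normalising, trivially intersecting subgroup $\langle t\rangle$ makes $G$ a Frobenius group; once that is in place, the action of $t$ on $A$ and the evaluation of $\kappa(G)$ are routine, the latter being a direct appeal to Lemma~\ref{lemma:frobenius}.
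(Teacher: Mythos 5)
Your proposal is correct and follows essentially the same route as the paper: establish that $G$ is a Frobenius group with complement $\langle t\rangle$, deduce that $t$ inverts the abelian kernel $A$ of odd order, and evaluate $\kappa(G)$ via Lemma~\ref{lemma:frobenius}. The only cosmetic difference is that the paper obtains the inversion action from the injectivity of the map $g \mapsto g^{-1}g^t$ on $A$, whereas you read it off from the observation that every element of the coset $tA = t^G$ is an involution; both arguments are standard and equally valid.
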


\begin{proof}
The action of $G$ on the conjugacy class $t^G$
makes $G$ into a Frobenius group. Let~$A$ be the Frobenius kernel.
Since $t$ acts without fixed points on~$A$, we see that~$A$
has odd order. Now, proceeding as in \cite[Ch.~10, Lemma~1.1]{Gorenstein},
we observe the map on~$A$ defined by $g \mapsto g^{-1}g^t$
is injective. Therefore every element in $A$ is of the form
$g^{-1}g^t$ for some~$g \in A$, and hence~$t$ acts on $A$
by mapping each element to its inverse. It is an easy exercise
to show this implies that $A$ is abelian.

We have therefore shown that $G$ is a Frobenius group
with abelian kernel $A$ and complement isomorphic to $C_2$.
Since $\kappa(A) = 1/|A|$ it
follows from Lemma~\ref{lemma:frobenius} that
\[ \kappa(G) 
= \frac{1}{|G|^2} + \frac{1}{2} \Bigl( \frac{1}{|A|}
- \frac{1}{|A|^2}\Bigr) + \Bigl( \frac{1}{2} - \frac{1}{4} \Bigr) \\
= \frac{1}{4} + \frac{1}{|G|} - \frac{1}{|G|^2} .\]
This completes the proof of the proposition.
\end{proof}

\subsection{Groups with a self-centralizing three-element}
To deal with the remaining three cases of Proposition~\ref{prop:class}
we shall need the following straightforward lemmas.

\begin{lemma}\label{lemma:fpf}
Let $K$ be a finite group and let $x : K \rightarrow K$ be
a fixed-point-free automorphism of prime order $p$.
Then $|K| \equiv 1$ mod $p$ and if
$L$ is an $x$-invariant subgroup of $K$ then
the induced
action of $x$ on $K/L$ is fixed-point-free.
\end{lemma}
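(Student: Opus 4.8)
The plan is to prove the two assertions separately, as they are essentially independent. For the first claim, that $|K| \equiv 1 \pmod p$, I would count the elements of $K$ by grouping them into orbits under the cyclic group $\langle x \rangle$ of order $p$. Since $p$ is prime, each orbit has size either $1$ or $p$; an orbit has size $1$ precisely when its element is fixed by $x$. Because $x$ is fixed-point-free, the \emph{only} fixed point is the identity element $1 \in K$ (which is necessarily fixed by any automorphism). Hence exactly one orbit is a singleton and every other orbit has size $p$, giving $|K| = 1 + p\,m$ where $m$ is the number of nontrivial orbits. This yields $|K| \equiv 1 \pmod p$ immediately.

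For the second claim, suppose $L$ is an $x$-invariant subgroup of $K$, so that $x$ induces a well-defined automorphism $\bar x$ on the quotient $K/L$ via $\bar x(gL) = x(g)L$. I need to show $\bar x$ is fixed-point-free, i.e. that the only coset fixed by $\bar x$ is the trivial coset $L$. Suppose $gL$ is fixed, so $x(g)L = gL$, equivalently $g^{-1}x(g) \in L$. The natural approach is to exploit the map $g \mapsto g^{-1}x(g)$, which already appears in the proof of Proposition~\ref{prop:scinvol} (attributed there to \cite[Ch.~10, Lemma~1.1]{Gorenstein}): the hypothesis that $x$ is fixed-point-free makes this map injective on $K$. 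The key step is to show that the coset $gL$ contains an element fixed by $x$, from which fixed-point-freeness of $x$ forces that element to be $1$, hence $gL = L$.

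To carry out that key step, I would argue that $\bar x$, restricted to the fixed coset $gL$, is an automorphism-like map of the finite set $gL$ whose orbits again have size $1$ or $p$ under $\langle x \rangle$. More precisely, $x$ permutes the elements of the $\bar x$-fixed coset $gL$ (since $x(gL) = gL$ as a set), and this permutation has order dividing $p$; its orbits have size $1$ or $p$. Since $|gL| = |L|$ and, by the first part of the lemma applied to $L$ (which is itself $x$-invariant, so $x$ restricts to a fixed-point-free automorphism of $L$), we have $|L| \equiv 1 \pmod p$. The number of elements of $gL$ fixed by $x$ is therefore congruent to $|gL| = |L| \equiv 1 \pmod p$, so in particular it is nonzero. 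Thus $gL$ contains at least one element $h$ with $x(h) = h$; fixed-point-freeness of $x$ on $K$ gives $h = 1$, whence $gL = L$. This completes the argument.

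I expect the main obstacle to be the counting argument on the fixed coset $gL$: one must check carefully that $x$ genuinely restricts to a permutation of the \emph{set} $gL$ (using $x(gL) = x(g)L = gL$) and that the congruence $|L| \equiv 1 \pmod p$ is available — which it is, provided we are allowed to apply the first part of the lemma to the $x$-invariant subgroup $L$, since $x$ fixes $1 \in L$ and has no other fixed points in $K \supseteq L$. An alternative, perhaps cleaner route avoids the coset count entirely: take $g$ with $g^{-1}x(g) \in L$, use the Gorenstein injectivity to realise every element of $L$ as $\ell^{-1}x(\ell)$ for a unique $\ell \in L$, match $g^{-1}x(g) = \ell^{-1}x(\ell)$, and deduce $g\ell^{-1}$ is $x$-fixed, hence equal to $1$, so $g \in L$. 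Whichever route is chosen, the congruence $|L| \equiv 1 \pmod p$ (equivalently, surjectivity of the twisted map on $L$) is the crux, and establishing it rigorously is where the care is needed.
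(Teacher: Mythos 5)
Your proof is correct and takes essentially the same approach as the paper: both parts rest on counting $\langle x\rangle$-orbits (of size $1$ or $p$, with the identity as the unique fixed point), and your observation that a fixed coset $gL$ must contain an $x$-fixed element is just the contrapositive of the paper's argument that a proper fixed coset would be a union of orbits of size $p$, forcing $p$ to divide $|L|$ --- impossible since $|L|$ divides $|K|\equiv 1 \pmod p$.
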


\begin{proof}
The identity is the unique fixed point of $x$ on $K$ and
so $K \backslash \{1\}$ is a union of orbits of $x$.
Hence $|K| \equiv 1$ mod $p$.
Suppose that $Lg$ is a proper coset
of $L$ and that $Lg$ is fixed by~$x$. Then $Lg$ is a union
of orbits of~$x$ and so $|Lg|$ is divisible by $p$,
a contradiction.
\end{proof}

\begin{lemma}\label{lemma:pcentre}
Let $G$ be a finite group containing a self-centralizing element
$x$ of order $3$. Then $\left<x \right>$ is a
self-centralizing Sylow $3$-subgroup
of~$G$.
\end{lemma}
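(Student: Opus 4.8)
The plan is to show that if $x$ has order $3$ and is self-centralizing, then $\langle x\rangle$ is a Sylow $3$-subgroup of $G$ which is also self-centralizing. The hypothesis gives $C_G(x)=\langle x\rangle$, so already $C_G(x)$ has order $3$; the main work is to prove that no larger power of $3$ divides $|G|$, i.e.\ that $\langle x\rangle$ contains a full Sylow $3$-subgroup.

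First I would embed $\langle x\rangle$ in a Sylow $3$-subgroup $P$ of $G$ and aim to show $P=\langle x\rangle$. Since $P$ is a nontrivial finite $p$-group with $p=3$, its centre $Z(P)$ is nontrivial and $Z(P)$ centralizes every element of $P$. The key observation is that $x$ lies in $P$, so any element of $Z(P)$ commutes with $x$, giving $Z(P)\le C_G(x)=\langle x\rangle$. Thus $Z(P)\le\langle x\rangle$, and since $Z(P)$ is nontrivial and $\langle x\rangle$ has order $3$ (hence is cyclic of prime order), we must have $Z(P)=\langle x\rangle$.

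Next I would leverage the standard fact that in a nontrivial $p$-group, a proper subgroup is properly contained in its normalizer (the normalizer-grows property). Suppose for contradiction that $\langle x\rangle\neq P$. Then $\langle x\rangle$ is a proper subgroup of $P$, so $N_P(\langle x\rangle)$ strictly contains $\langle x\rangle$; pick $y\in N_P(\langle x\rangle)\setminus\langle x\rangle$. Now $y$ normalizes $\langle x\rangle$, so conjugation by $y$ induces an automorphism of the cyclic group $\langle x\rangle$ of order $3$. The automorphism group of $\langle x\rangle$ has order $2$, which is coprime to $3$; but $y$ has $3$-power order, so the automorphism it induces has $3$-power order, forcing it to be trivial. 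Hence $y$ centralizes $x$, giving $y\in C_G(x)=\langle x\rangle$, contradicting the choice of $y$. Therefore $\langle x\rangle=P$ is a Sylow $3$-subgroup.

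Finally I would record that $\langle x\rangle$ is self-centralizing in $G$, which is immediate from the hypothesis $C_G(x)=\langle x\rangle$ since $C_G(\langle x\rangle)=C_G(x)$. The main obstacle is the step showing $P=\langle x\rangle$: one must rule out the possibility that $\langle x\rangle$ sits inside a strictly larger Sylow $3$-subgroup, and the cleanest route is the normalizer argument above, using both the self-centralizing hypothesis and the coprimality of $|\mathrm{Aut}(\langle x\rangle)|=2$ with $3$. An alternative would be to invoke the fact (Lemma~\ref{lemma:fpf}-style reasoning, or Burnside's normal $p$-complement theorem) that an abelian Sylow subgroup lying in the centre of its normalizer yields a normal $p$-complement, but the direct normalizer computation is more elementary and self-contained.
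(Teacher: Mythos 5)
Your proof is correct, and your opening step is exactly the paper's: embed $\langle x\rangle$ in a Sylow $3$-subgroup $P$, note that $Z(P)$ is non-trivial and centralizes $x$, so $Z(P)\le C_G(x)=\langle x\rangle$, forcing $Z(P)=\langle x\rangle$. Where you diverge is in how you finish. The paper observes that once $x\in Z(P)$, every element of $P$ commutes with $x$, so $P\le C_G(x)=\langle x\rangle$ and you are done in one line. You instead discard this and run a separate normalizer-grows argument: if $\langle x\rangle<P$, pick $y\in N_P(\langle x\rangle)\setminus\langle x\rangle$, note that $y$ induces an automorphism of $\langle x\rangle$ whose order divides both $|\mathrm{Aut}(C_3)|=2$ and a power of $3$, hence is trivial, so $y\in C_G(x)$, contradiction. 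That argument is valid and entirely self-contained (it does not even need the $Z(P)$ computation), but it is strictly more work than what your first paragraph already gives you: having shown $Z(P)=\langle x\rangle$, the containment $P\le C_G(Z(P))\le C_G(x)=\langle x\rangle$ is immediate. So the proposal is sound; it just takes a longer route through the normalizer condition where the centre argument closes the proof directly.
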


\begin{proof}
Let $P$ be a Sylow $3$-subgroup of $G$ containing $x$. Since
$x$ is centralized by $Z(P)$ we see that $x \in Z(P)$. But then $P\leq\Cent_G(\langle x \rangle)=\langle x \rangle$. Hence $\left<x\right> = P$.
\end{proof}


\begin{lemma}\label{lemma:frob3p}
Suppose that $K$ is a non-trivial finite group and $x : K \rightarrow K$
is a fixed-point-free automorphism of order $3$. 
If $|K / K'| \le 12$
then either $K \cong C_2 \times C_2$ or $K \cong C_7$.

\end{lemma}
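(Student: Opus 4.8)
The plan is to determine $K$ by first controlling its abelianization and then forcing $K$ to be abelian. First I would form the semidirect product $G = K \rtimes \langle x\rangle$; since $x$ is fixed-point-free, $G$ is a Frobenius group with kernel $K$ and complement $\langle x\rangle \cong C_3$, so Thompson's theorem (\cite[Ch.~10, Theorem~2.1]{Gorenstein}) shows that $K$ is nilpotent. Hence $K$ is the direct product of its Sylow subgroups $P_p$, each of which is characteristic and so $x$-invariant. On each nontrivial $P_p$ the restriction of $x$ is fixed-point-free of order $3$ (order $1$ would fix $P_p$ pointwise), and applying Lemma~\ref{lemma:fpf} to the $x$-invariant subgroup $P_p'$ shows that the induced action on the nontrivial abelian group $P_p/P_p'$ is fixed-point-free of order $3$; a second application of Lemma~\ref{lemma:fpf} then gives $|P_p/P_p'| \equiv 1 \pmod 3$.

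Next I would run the arithmetic. Since $|K/K'| = \prod_p |P_p/P_p'|$ and each nontrivial factor is a $p$-power congruent to $1$ modulo $3$, the least possible contribution of the prime $p$ is $4$ when $p=2$ and $7$ when $p=7$, while for every other prime it is at least $13$ (at least $p \ge 13$ when $p \equiv 1$, and at least $p^2 \ge 25$ when $p \equiv 2 \pmod 3$). As $K$ is nontrivial and the product is at most $12$, exactly one prime divides $|K|$, and it is $2$ (with $|K/K'| = 4$) or $7$ (with $|K/K'| = 7$). Thus $K$ is either a $2$-group or a $7$-group.

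If $K$ is a $7$-group with $|K/K'| = 7$, then $K/K'$ is cyclic, so its Frattini quotient is cyclic and $K$ is cyclic; hence $K' = 1$ and $K \cong C_7$. If $K$ is a $2$-group with $|K/K'| = 4$, then $K/K' \cong C_2 \times C_2$, since $C_4$ admits no fixed-point-free automorphism of order $3$ (indeed $|\mathrm{Aut}(C_4)| = 2$). It remains to prove that $K$ is abelian, and this is the main obstacle.

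To dispose of it, suppose for contradiction that $K' \neq 1$, and write $V = K/K' \cong C_2 \times C_2$ and $\gamma_3 = [K',K]$. The commutator map induces a surjective, $x$-equivariant, alternating bilinear map $V \times V \to K'/\gamma_3$, hence an $x$-equivariant surjection $\wedge^2 V \twoheadrightarrow K'/\gamma_3$. Since every invertible matrix over $\mathbb{F}_2$ has determinant $1$, the element $x$ acts trivially on the one-dimensional space $\wedge^2 V$, and therefore trivially on $K'/\gamma_3$. On the other hand $K'$ is $x$-invariant with $x$ acting fixed-point-freely of order $3$, so by Lemma~\ref{lemma:fpf} the induced action on $K'/\gamma_3$ is fixed-point-free; being also trivial, it forces $K'/\gamma_3 = 1$. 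Then $K' = [K',K]$, contradicting the nilpotency of $K$. Hence $K' = 1$, so $K$ is abelian of order $4$ and $K \cong C_2 \times C_2$, completing the argument. (Alternatively, one could reach the abelian conclusion by invoking the classical theorem that a fixed-point-free automorphism of order $3$ forces nilpotency class at most $2$, which makes $K'$ cyclic and hence fixed pointwise by $x$; I prefer the self-contained commutator argument above.)
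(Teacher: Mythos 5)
Your proof is correct and follows essentially the same route as the paper: Thompson's theorem gives nilpotency, Lemma~\ref{lemma:fpf} pins down the abelianisation as $C_2\times C_2$ or $C_7$, the cyclic case is dispatched by nilpotency, and the non-abelian $C_2\times C_2$ case is killed by examining the fixed-point-free action on $K'/[K,K']$. Your final contradiction (that $x$ acts trivially on the one-dimensional quotient $\wedge^2(K/K')$ surjecting onto $K'/[K,K']$) is just a repackaging of the paper's observation that $K'/[K,K']$ is a cyclic $2$-group generated by a single commutator and so admits no fixed-point-free automorphism of order $3$; the only other difference is your Sylow-by-Sylow arithmetic in place of the paper's direct use of $|K/K'|\equiv 1 \bmod 3$, where you should also note that $p=3$ is excluded because no nontrivial power of $3$ is congruent to $1$ modulo $3$.
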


\begin{proof}
Suppose first of all that $K$ is abelian. Then, from Lemma~\ref{lemma:fpf}
we see that $|K| = 4$, $7$ or $10$. 
The latter case is  impossible as $C_2 \times C_5$ does not admit 
an automorphism of order $3$.
Similarly $C_4$ is ruled out, leaving the groups in the lemma.

Now suppose that $K$ is not abelian.  By the theorem of Thompson
mentioned before Lemma~\ref{lemma:frobenius},~$K$ is nilpotent. The
derived group $K'$ is a characteristic subgroup of $K$, so $x$ also
acts on the non-trivial quotient $K/K'$. It follows from
Lemma~\ref{lemma:fpf} that the action of $x$ on $K/K'$ is
fixed-point-free.  Hence, by the previous paragraph, we either have
$K/K' \cong C_2 \times C_2$, or $K/K' \cong C_7$. A nilpotent group
with a cyclic abelianisation is cyclic, so the latter case is
impossible.

In the former case, let $K/K' = \left<sK', tK'\right>$ and let
$L = [K,K']$ be the second term in the lower central series of $K$.
Note that $L$ is a $\left<x\right>$-invariant subgroup of $K$, so, by
Lemma~\ref{lemma:fpf}, $x$ acts without fixed points on $K/L$,
and also on $(K/L)'$.
It is clear that $(K/L)'$ is generated by $[sL,tL]$,
and so it is a non-trivial cyclic $2$-group.
But no such group admits a fixed-point-free automorphism of order $3$,
a contradiction. 
\end{proof}

\subsubsection*{Case (ii) of Proposition~\ref{prop:class}.}
In this case we have $c_1(G) = c_2(G) = 3$ and so there are two
distinct conjugacy class of self-centralizing elements of order
$3$. Let $x$ be contained in one of these classes. By Lemma~\ref{lemma:pcentre},
 each element
of order $3$ is contained in some conjugate of~$\left<x\right>$,
and so we see
that $x$ is not conjugate to $x^{-1}$. The subgroup~$\left<x\right>$ 
is therefore not only self-centralizing, but also
self-normalizing. If $\left<x\right>$ is the unique Sylow $3$-subgroup
of $G$
then $G \cong C_3$. Otherwise it follows at once that the action of
$G$ on the conjugates of $\left<x\right>$ makes $G$ into
a Frobenius group with complement~$\left<x\right>$. Let $K$ be the
Frobenius kernel of $G$.

By Lemma~\ref{lemma:fpf}, $x$ acts on $K/K'$ as a fixed-point-free
automorphism. Hence $G/ K'$ is a Frobenius group.  Applying
Lemmas~\ref{lemma:quo} and~\ref{lemma:frobenius} to $G/K'$, and using
the fact that $\kappa(K/K') = 1/|K/K'|$, we get
\[
\kappa(G) \leq \kappa(G/K') =\frac{1}{3^2|K/K'|^2} + \frac{1}{3} \Bigl( \frac{1}{|K/K'|} -
\frac{1}{|K/K'|^2} \Bigr) + \Bigl( \frac{1}{3} - \frac{1}{9} \Bigr).
\]
If $\kappa(G) \ge 1/4$, the previous inequality implies that
\[ \frac{1}{36} \le \frac{-2}{9|K/K'|^2}  + \frac{1}{3|K/K'|} \]
and it follows that $| K / K'| \le 11$. Since $K$ is nilpotent, $K/K'$
is non-trivial.
By Lemma~\ref{lemma:frob3p} we get that 
either $K \cong C_2 \times C_2$, in which case 
 $G\cong(C_2 \times C_2) \rtimes C_3\cong A_4$,
or $K \cong C_7$, in which case $G\cong C_7 \rtimes C_3$.

\subsubsection*{Cases (iii) and (iv) of Proposition~\ref{prop:class}.} Lemma~\ref{lemma:pcentre} shows that the Sylow $3$-groups of $G$ are self-centralising and of order $3$.
By hypothesis, there is a unique conjugacy class of elements with centraliser of order~$3$, and so we see that all $3$-elements of $G$ are conjugate and are self-centralising.

We shall need the main theorem
in~\cite{FeitThompson}. We repeat the statement of this theorem below.

\begin{utheorem}[\hbox{[W. Feit and 
J.~G. Thompson \cite{FeitThompson}]}]
Let $G$ be a finite group which contains a self-centralizing subgroup
of order $3$. Then one of the following statements is true.

\begin{thmlist}
\item[\emph{(I)}] $G$ contains a nilpotent normal subgroup $N$ such that $G/N$
is isomorphic to either $A_3$ or~$S_3$.

\item[\emph{(II)}]  $G$ contains a normal subgroup $N$ which is a $2$-group such
that $G/N$ is isomorphic to~$A_5$.

\item[\emph{(III)}] $G$ is isomorphic to $\mathrm{PSL}(2,7)$.
\end{thmlist}
%
%
\end{utheorem}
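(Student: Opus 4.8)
The plan is to classify $G$ by combining transfer theory with the character theory of the trivial-intersection subgroup $P = \langle x\rangle$. By Lemma~\ref{lemma:pcentre}, $P$ is a self-centralizing Sylow $3$-subgroup, and since any nontrivial element of $P \cap P^g$ would be a self-centralizing $3$-element generating both $P$ and $P^g$, distinct conjugates of $P$ meet trivially; thus $P$ is a TI-subgroup. Because $C_G(P) = P$, the quotient $N_G(P)/P$ embeds in $\mathrm{Aut}(C_3) \cong C_2$, so $|N_G(P)| \in \{3,6\}$, and the proof splits according to this dichotomy.

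First I would dispose of the case $|N_G(P)| = 3$, that is, $N_G(P) = C_G(P)$. Here Burnside's normal $p$-complement theorem gives a normal $3$-complement $K$, and since $x$ centralizes nothing outside $P$ it acts fixed-point-freely on $K$; thus $G = K \rtimes P$ is a Frobenius group with kernel $K$, which is nilpotent by the theorem of Thompson quoted before Lemma~\ref{lemma:frobenius}. Taking $N = K$ gives conclusion~(I) with $G/N \cong A_3$. This is the routine half of the argument.

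The substance lies in the case $N_G(P) \cong S_3$, where an involution $t$ inverts $x$ and $x$ is conjugate to $x^{-1}$. Here I would first reduce to an almost-simple configuration. Since $x$ acts coprimely on the normal subgroup $M = \mathrm{O}_{3'}(G)$ with $C_M(x)=1$, the image $PM/M$ remains a self-centralizing subgroup of order $3$ in $G/M$, and each of the three conclusions pulls back through $M$ (the preimage of a nilpotent normal subgroup, or of a normal $2$-group, differs from such a subgroup only by the odd-order $3'$-group $M$). Hence I may assume $\mathrm{O}_{3'}(G)=1$, and a minimal-counterexample analysis of $F^*(G)$ then forces a unique minimal normal subgroup that is either an elementary abelian $2$-group — the seed of case~(II) — or a nonabelian simple group again carrying a self-centralizing~$P$.

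The hard part is pinning down the simple section, and for this I would invoke the exceptional-character machinery attached to the TI-subgroup $P$ with inertial index $e = |N_G(P):C_G(P)| = 2$. Since the defect group $P$ is cyclic of order $p=3$, Brauer's theory of blocks of defect one — equivalently Suzuki's coherent families of generalized characters induced from $N_G(P)$ — constrains the principal $3$-block to contain exactly $e=2$ non-exceptional irreducible characters and $(p-1)/e = 1$ exceptional character, yielding tight arithmetic relations among $|G|$, the relevant character degrees, and the class sizes. Coupling these relations with an analysis of the centralizer $C_G(t)$ of the inverting involution, whose structure is severely restricted, should force the nonabelian simple section to be isomorphic to $A_5$ or $\mathrm{PSL}(2,7)$, yielding conclusions~(II) and~(III) respectively. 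I expect the \emph{coherence} step — verifying that the characters induced from $N_G(P)$ form a coherent set, so that the exceptional characters genuinely exist and satisfy the predicted isometry — to be the principal obstacle; it is the technical core that separates this theorem from the elementary Burnside reduction, and it is precisely why the result merited a paper of its own.
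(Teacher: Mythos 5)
The first thing to note is that the paper does not prove this statement at all: it is quoted verbatim as a theorem of Feit and Thompson \cite{FeitThompson} and used as a black box in the classification of cases (iii) and (iv) of Proposition~\ref{prop:class}, so there is no internal proof to compare yours against. Judged on its own terms, your proposal correctly handles the easy case $N_G(P)=C_G(P)$: Burnside's transfer theorem yields a normal $3$-complement $K$, the element $x$ acts on $K$ without fixed points since $C_G(x)=P$ and $P\cap K=1$, Thompson's theorem makes $K$ nilpotent, and conclusion~(I) follows with $G/K\cong A_3$. That half is sound and genuinely routine.

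The remainder has real gaps. First, the reduction modulo $M=\mathrm{O}_{3'}(G)$ does not preserve the conclusions: the preimage in $G$ of a nilpotent normal subgroup of $G/M$ need not be nilpotent, the preimage of a normal $2$-subgroup need not be a $2$-group, and $G/M\cong \mathrm{PSL}(2,7)$ does not give $G\cong \mathrm{PSL}(2,7)$ unless $M=1$. Since (I)--(III) are assertions about $G$ itself and not about $G$ modulo an unspecified odd normal subgroup, this step as written proves a strictly weaker statement; controlling $\mathrm{O}_{3'}(G)$ in the non-Frobenius case is part of the work, not a free reduction. (There is also an internal inconsistency: once $\mathrm{O}_{3'}(G)=1$, a minimal normal subgroup cannot be an elementary abelian $2$-group, so the dichotomy you describe for $F^*(G)$ cannot arise in the form stated.) Second, and decisively, the entire content of the theorem --- that the only possibilities for the simple section are $A_5$ and $\mathrm{PSL}(2,7)$ --- rests on the assertion that the exceptional-character theory of the TI-subgroup $P$ with inertial index $e=2$, together with an analysis of $C_G(t)$, ``should force'' the answer. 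You have named the right machinery (it is essentially what Feit and Thompson use), but the coherence argument, the degree equations, and the centralizer analysis are not carried out, and you acknowledge that this is the principal obstacle. As it stands the proposal is an outline of a known proof rather than a proof; for the purposes of this paper the correct move is the one the authors make, namely to cite \cite{FeitThompson}.
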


The conjugacy classes of $\mathrm{PSL}(2,7)$ have centralizer sizes
$3$, $4$, $7$, $7$, $8$ and $168$, and so $\mathrm{PSL}(2,7)$
does not fall into any of our cases.  We may therefore ignore the
third possibility in the Feit--Thompson theorem.  (Explicit
calculation shows that in fact $\kappa(\mathrm{PSL}(2,7)) = 3247/14112$.)  Moreover, groups with a quotient isomorphic to $A_3$
contain at least two conjugacy classes of $3$-elements, and so
$G/N\cong S_3$ whenever possibility~(I) occurs.

Suppose that our Case~(iii) holds. Half of the elements of $G$ lie in the two
conjugacy classes with a centraliser of order $4$, and in particular
at least half the elements of $G$ are non-trivial $2$-elements. This
means that possibility~(II) of the Feit--Thompson theorem cannot
occur: the number of $2$-elements in $A_5$ is $16$, and so the
proportion of $2$-elements in a group having~$A_5$ as a quotient
cannot exceed $16/60$. We may
therefore assume that $G$ has a nilpotent
normal subgroup $N$ such that~$G/N$ is isomorphic to $S_3$. Note that $N$ cannot be trivial, for then~\hbox{$c_1(G)=2$}.

Let $H$ be the subgroup of index $2$ in $G$ containing $N$.  The two
conjugacy classes with centralizer of size $4$ must form $G \backslash
H$, and if $x\in G$ has centralizer size $3$ then $x^G \subseteq H$.
The class $x^G$ splits into two when the conjugacy action is
restricted to $H$, and so, as in Case~(ii), we see that~$H$ is a
Frobenius group. Let~$K$ be its kernel, and so $H = K \rtimes \left<x
\right>$ where $x$ acts fixed-point-freely. Note that~$K$ is nilpotent, 
and since $N$ is non-trivial, $K$ is also non-trivial.  
The conjugacy classes of $G$ not contained in~$K$ contribute
$\frac{1}{4^2} + \frac{1}{4^2} + \frac{1}{3^2}$ to $\kappa(G)$, and so
we have $\kappa(G) = \frac{17}{72} + c$ where $c$ is the contribution
from the $G$-conjugacy classes contained in~$K$.  Let $k \in K$ be a
non-identity element.  Either $\Cent_G(k)$ meets $G \backslash H$, in
which case $k^G$ splits into $3$ classes, each with centralizer size
$|\Cent_G(k)|/2$, when the conjugacy action is restricted to~$K$, or
$\Cent_G(k) \le K$, in which case $k^G$ splits into $6$ classes, each
with centralizer size $|\Cent_G(k)|$.  Suppose that the former
conjugacy classes of $G$ have $G$-centralizer sizes $x_1$, \ldots,
$x_r$, and the latter have $G$-centralizer sizes $y_1$, \ldots, $y_s$.
By~\eqref{eq:kappa} we have
\[ \kappa(G) =
\frac{1}{|G|^2} +
\frac{17}{72}  + \sum_{i=1}^r \frac{1}{x_i^2}
+ \sum_{j=1}^s \frac{1}{y_j^2} \]
and
\[
\kappa(K) = \frac{1}{|K|^2} + \sum_{i=1}^r \frac{3}{(x_i/2)^2}
+ \sum_{j=1}^s \frac{6}{y_i^2}.
\]
Hence
\[
\kappa(G) \le \frac{17}{72} + \frac{\kappa(K)}{6}.
\]
Thus, by Lemma~\ref{lemma:quo},
\[
\kappa(G) \leq \frac{17}{72} +
\frac{\kappa(K/K')}{6} =\frac{17}{72} + \frac{1}{6|K/K'|}.
\]

Suppose that $\kappa(G) \ge 1/4$. The above inequality implies that
$|K/K'|\leq 12$ and hence, by Lemma~\ref{lemma:frob3p}, $K\cong
C_2\times C_2$ or $K\cong C_7$. The latter case cannot occur because then 
$G$ has order~$42$ and
so cannot contain a centralizer of order~$4$.
Hence we must have $K \cong C_2 \times C_2$ and $H = A_4$, and so
$G = S_4$. Therefore $S_4$ is the unique group lying in case (iii).

%

Finally, suppose Case~(iv) holds. 
Possibility (I) of the Feit--Thompson theorem cannot occur. To see
this, let $N$ be a normal subgroup of $G$ such that $G/N\cong S_3$,
and let $H$ be the subgroup of index $2$ in $G$ containing $N$. There
are conjugacy classes of $G$ with sizes $|G|/3$, $|G|/4$, $|G|/5$ and
$|G|/5$. All other conjugacy classes of $G$ have total size $|G|/60$.
Either $H$ or~$G \backslash H$ must contain the class of size $|G|/3$,
but no union of conjugacy classes of $G$ has size $|G|/2 - |G|/3 =
|G|/6$. So possibility~(II) must occur: there exists a
normal $2$-subgroup $N$ of $G$ such that $G/N\cong A_5$.

Let $g\in G$ lie in the unique conjugacy class of $G$ with a
centraliser of order $4$. Since any power of $g$ lies in $\Cent_G(g)$,
we see that $g$ is a $2$-element. Let $P$ be a Sylow $2$-subgroup
containing~$g$. Note that $N\leq P$, and $|P|=4|N|$. Since
$N_G(P)/N=N_{A_5}(P/N)$, and there is a $3$-element in the normaliser
of a Sylow $2$-subgroup of $A_5$, there exists an element $x\in
N_G(P)$ of order $3$. Since $\langle x\rangle$ is a self-centralising
Sylow $3$-subgroup, the action of $\langle x \rangle$ on $P$ is
fixed-point-free. In particular, since the action preserves $Z(P)$ we
see that $|Z(P)|\not=2$. Since $\langle g,Z(P)\rangle\subseteq
\Cent_G(g)$ and $|\Cent_G(g)|=4$, we must therefore have $|Z(P)|=4$
and $g\in Z(P)$. But then $P\leq \Cent_G(g)$ and so $|P|=4$. This
implies that $|N|=1$ and so $G\cong A_5$, as required.

\section{Proof of Theorem~\ref{thm:isoclinism}}
\label{sec:isoclinism}

We begin by reminding the reader of the definition of isoclinism.
Given a group $G$, we shall write $\overline{G}$ for the
quotient group $G/Z(G)$ and $\overline{g}$ for the coset $gZ(G)$
containing $g \in G$.
We define a map
$f_G:\overline{G}\times\overline{G} \longrightarrow G'$ by
$f_G(g_1Z(G),g_2Z(G))=[g_1,g_2]$, for
$g_1$, $g_2 \in G$. (It is clear that this map
is well-defined.)
Two finite groups $G$ and $H$ are said to be \emph{isoclinic}
if there exist isomorphisms
$\alpha:\overline{G}\longrightarrow \overline{H}$ and
$\beta:G'\longrightarrow H'$, such that for all $g_1,g_2\in G$,
\[
f_G(g_1,g_2)^\beta = f_H(g_1^\alpha,g_2^\alpha).
\]

Let $G$ and $H$ be isoclinic groups and let
$\alpha$, $\beta$, $f_G$ and $f_H$ be as in the definition
of isoclinism.
Since
\[ |g^G| = \bigl| [g,G] \bigr| = |f_G(\overline{g}, \overline{G})| \]
for each $g \in G$,
we have
\[ |G|\kappa(G)  
= \frac{1}{|G|}\sum_{g\in G}\bigl| [g,G] \bigr|  =
\frac{|Z(G)|}{|G|}\sum_{x\in \overline{G}}|f_G(x,\overline{G})|.
\]
Similarly,
\[
|H|\kappa(H)=\frac{|Z(H)|}{|H|}\sum_{y\in \overline{H}}|f_H(y,\overline{H})|.
\]
Now using the bijections $\alpha$ and $\beta$ we get
\[
\sum_{x \in \overline{G}} |f_G(x,\overline{G})| =
\sum_{x \in\overline{G}}
|f_G(x,\overline{G})^\beta|
= \sum_{x\in \overline{G}}|f_H(x^\alpha,\overline{H})| =
\sum_{y \in \overline{H}}|f_H(y,\overline{H})|.
\]
Since $\overline{G}$ and $\overline{H}$ are isomorphic, $|G|/|Z(G)| = |H|/|Z(H)|$ and so
\[
|G|\kappa(G)= |H|\kappa(H),
\]
as required by Theorem~\ref{thm:isoclinism}.

\section{Permutations that have only short cycles}
\label{sec:short_cycles}

We now turn to the proofs of our theorems on $\kSn$
and $\rho(S_n)$. The reader is referred to~\S\ref{sec:outline}
above for an outline of what follows.

For the remainder of the paper, we
let $\Omega_n = \{1,2, \ldots, n\}$. Unless we
indicate otherwise, we regard the symmetric
group $S_n$ as the set of permutations of
$\Omega_n$.
Let $s_k(n)$ be the probability that a permutation of~$\Omega_n$,
chosen uniformly at random, has only cycles of length strictly less
than~$k$.

In this section we establish bounds on $s_k(n)$. We begin
with the following well known lemma.

\begin{lemma}\label{lemma:1cycle}
Let $n \in \mathbf{N}$ and let $1\le \ell \le n$. Let $X \subseteq \Omega_n$
be an $\ell$-set.
If $\sigma$ is chosen uniformly at random from $S_n$ 
then
\begin{thmlist}
\item 
the probability
that $\sigma$ acts as an $\ell$-cycle on $X$ is
$\frac{1}{\ell} \binom{n}{\ell}^{-1}$;

\item the expected number of $\ell$-cycles in $\sigma$ is $1/\ell$;
\item the
probability that $1$ is contained in an $\ell$-cycle of $\sigma$
is~$1/n$.
\end{thmlist}
\end{lemma}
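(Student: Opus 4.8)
The plan is to prove the three parts of Lemma~\ref{lemma:1cycle} by direct counting, using the standard technique of choosing a set, arranging a cycle on it, and permuting the rest freely.

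First I would prove part~(i). Fix the $\ell$-set $X$. To count permutations $\sigma \in S_n$ that act as a single $\ell$-cycle on $X$, I would observe that such a $\sigma$ is determined by (a) the cyclic arrangement of the $\ell$ points of $X$ into a single $\ell$-cycle, of which there are $(\ell-1)!$, together with (b) an arbitrary permutation of the remaining $n-\ell$ points, of which there are $(n-\ell)!$. Hence the number of favourable permutations is $(\ell-1)!\,(n-\ell)!$, and dividing by $|S_n| = n!$ gives the probability
\[
\frac{(\ell-1)!\,(n-\ell)!}{n!} = \frac{1}{\ell}\cdot\frac{\ell!\,(n-\ell)!}{n!} = \frac{1}{\ell}\binom{n}{\ell}^{-1},
\]
as claimed. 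This is the core computation; the remaining two parts follow by summing or specialising.

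Next I would deduce part~(ii) by linearity of expectation. The expected number of $\ell$-cycles in $\sigma$ is the sum, over all $\ell$-subsets $X \subseteq \Omega_n$, of the probability that $\sigma$ acts as an $\ell$-cycle on $X$. There are $\binom{n}{\ell}$ such subsets, and each contributes $\frac{1}{\ell}\binom{n}{\ell}^{-1}$ by part~(i), so the expected number is $\binom{n}{\ell}\cdot\frac{1}{\ell}\binom{n}{\ell}^{-1} = 1/\ell$. For part~(iii), I would count permutations in which the point~$1$ lies in an $\ell$-cycle: one chooses the other $\ell-1$ points of that cycle from the remaining $n-1$ points in $\binom{n-1}{\ell-1}$ ways, arranges all $\ell$ points into a cycle containing~$1$ in $(\ell-1)!$ ways, and permutes the rest in $(n-\ell)!$ ways. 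This gives $\binom{n-1}{\ell-1}(\ell-1)!\,(n-\ell)! = (n-1)!$ permutations, independent of~$\ell$, whence the probability is $(n-1)!/n! = 1/n$.

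None of the three parts presents a genuine obstacle; the lemma is elementary and the phrase ``well known'' in the statement reflects this. The only point requiring a little care is the counting of cyclic arrangements in~(i): the factor $(\ell-1)!$ rather than $\ell!$ accounts for the fact that a cycle $(a_1\,a_2\,\cdots\,a_\ell)$ is unchanged under cyclic rotation of its entries, so one fixes a starting point and orders the remaining $\ell-1$ entries. Provided this is stated cleanly, parts~(ii) and~(iii) are immediate formal consequences.
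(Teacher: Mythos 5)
Your proof is correct. Parts (i) and (ii) follow the paper's argument exactly: count the $(\ell-1)!$ cycles on $X$ times the $(n-\ell)!$ permutations of the complement, then sum over the $\binom{n}{\ell}$ choices of $X$ by linearity of expectation. For part (iii) you take a slightly different route: you count directly the permutations in which $1$ lies in an $\ell$-cycle, getting $\binom{n-1}{\ell-1}(\ell-1)!\,(n-\ell)! = (n-1)!$, whereas the paper introduces indicator variables $X_i$ for ``$i$ lies in an $\ell$-cycle'', observes that $\E\bigl(\sum_{i=1}^n X_i\bigr) = \ell \cdot \frac{1}{\ell} = 1$ by part (ii), and concludes $\E(X_1)=1/n$ by the symmetry of the $X_i$. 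Both derivations are valid and equally elementary; yours has the small virtue of making (iii) independent of (ii), while the paper's reuses (ii) and avoids a second counting argument. The pleasant feature that your count $(n-1)!$ is independent of $\ell$ is exactly the combinatorial shadow of the paper's symmetry argument.
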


\begin{proof}
There are exactly $(\ell-1)!$ cycles of length $\ell$ in a symmetric group
of degree $\ell$. So the probability in (i) is
$(\ell-1)!(n-\ell)!/n! = \frac{1}{\ell} \binom{n}{\ell}^{-1}$.
It
follows that the expected number of $\ell$-cycles in $\sigma$ is~$1/\ell$ and so (ii) is established.

Define a random variable $X_i$ to be equal to $1$ if $i$ is contained in an $\ell$-cycle, and
 $0$ otherwise. The expected value of $\sum_{i=1}^n X_i$ is exactly $\ell$ times the expected number of $\ell$-cycles in a permutation. From part~(ii),
 we  see that $\E(\sum_{i=1}^n X_i)=1$. But $\E(X_i)$ does not depend on~$i$, hence $\E(\sum_{i=1}^nX_i)=n\E(X_1)$ and so $\E(X_1)=1/n$. Since $\E(X_1)$ is equal to the probability that $1$ is contained in an $\ell$-cycle,
the last part of the lemma is established.
\end{proof}

\begin{proposition}\label{prop:small_cycles}
For all $n,k \in \mathbf{N}$ with $k\geq 2$ we have $s_k(n) \le 1/t!$ where
$t = \lfloor n/(k-1) \rfloor$.
\end{proposition}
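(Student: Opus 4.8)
The plan is to reveal the cycle structure of a uniformly random $\sigma \in S_n$ by a greedy peeling process and to bound the probability that the first $t$ cycles revealed are all short. Specifically, set $b_1 = 1$ and follow the orbit $b_1, \sigma(b_1), \sigma^2(b_1), \ldots$ until it closes up; this is the first cycle. Let $b_2$ be the smallest point of $\Omega_n$ not in this cycle, peel off its cycle, and continue. Write $L_1, L_2, \ldots$ for the successive cycle lengths. The event whose probability is $s_k(n)$, namely that every cycle of $\sigma$ has length less than $k$, is contained in the event that $L_1, \ldots, L_t$ are all at most $k-1$, so it suffices to bound the probability of the latter.

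The key point is that after $j$ cycles have been peeled, using up some number $m_j$ of points, the restriction of $\sigma$ to the remaining $N_j = n - m_j$ points is uniformly distributed among permutations of that set, independently of the peeling history. By Lemma~\ref{lemma:1cycle}(iii) applied to this residual uniform permutation, the fresh point $b_{j+1}$ lies in a cycle of any given admissible length with probability $1/N_j$; hence, provided $N_j \ge k-1$, the conditional probability that $L_{j+1} \le k-1$ equals $(k-1)/N_j$. On the event that the first $j$ peeled cycles are all short, each uses at most $k-1$ points, so $m_j \le j(k-1)$ and therefore $N_j \ge n - j(k-1)$. Since $n \ge t(k-1)$ by the definition of $t$, for every $j \le t-1$ we have $N_j \ge (t-j)(k-1) \ge k-1$, so the formula applies and the conditional probability is at most $(k-1)/(n - j(k-1))$.

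Chaining these conditional probabilities together by iterated conditioning (peeling the last cycle first and working inwards) yields
\[
s_k(n) \le \prod_{j=0}^{t-1} \frac{k-1}{n - j(k-1)} = \frac{(k-1)^t}{\prod_{j=0}^{t-1}\bigl(n - j(k-1)\bigr)}.
\]
Finally, the bound $n \ge t(k-1)$ gives $n - j(k-1) \ge (t-j)(k-1)$ for each $j$, whence the denominator is at least $(k-1)^t \prod_{j=0}^{t-1}(t-j) = (k-1)^t\, t!$. Cancelling the common factor $(k-1)^t$ leaves $s_k(n) \le 1/t!$, as claimed.

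I expect the only delicate step to be the conditioning in the second paragraph: one must check carefully that the residual permutation on the unused points is genuinely uniform given the peeling history, so that Lemma~\ref{lemma:1cycle}(iii) may be applied afresh at each stage, and that the inequality $N_j \ge k-1$ holds on the relevant event so that the conditional probability is exactly $(k-1)/N_j$. Once this bookkeeping is in place, the remaining estimates are the elementary product manipulation above. As a sanity check, when $k=2$ we have $t = n$ and the bound reads $s_2(n) \le 1/n!$, with equality since the identity is the only permutation all of whose cycles have length $1$.
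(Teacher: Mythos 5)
Your proof is correct, and it organises the argument differently from the paper. The paper derives the one-step recurrence $s_k(n) = \frac{1}{n}\sum_{j=1}^{k-1} s_k(n-j)$ (conditioning once on the length of the cycle containing the point $1$, via Lemma~\ref{lemma:1cycle}(iii)), then proves by a separate induction that $s_k(n)$ is decreasing in $n$, which converts the recurrence into $s_k(n) \le \frac{k-1}{n}\, s_k(n-(k-1))$, and finally iterates this $t$ times; writing $n = t(k-1)+r$, the resulting product is exactly your $\prod_{j=0}^{t-1} (k-1)/(n-j(k-1))$, so the two proofs meet at the same final estimate. What your peeling argument buys is the elimination of the monotonicity step: because on the event that the first $j$ peeled cycles are short you have $N_j \ge n - j(k-1) \ge (t-j)(k-1) \ge k-1$, each conditional factor is bounded by $(k-1)/(n-j(k-1))$ directly, with no need to compare $s_k(n-j)$ for different $j$. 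The fact you flag but leave unproved --- that conditional on the peeling history the restriction of $\sigma$ to the unused points is uniform --- is genuinely required, but it is immediate: for a fixed cycle $c$ with support $X$ containing the fresh point, the map $\sigma \mapsto \sigma|_{\Omega_n \setminus X}$ is a bijection from the set of permutations whose cycle at that point is exactly $c$ onto $\Sym(\Omega_n \setminus X)$; the paper relies on the same fact, for a single step, in deriving \eqref{eq:1rec}. The paper's route is slightly longer but establishes the monotonicity of $s_k(n)$ along the way; yours is a self-contained single pass.
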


\begin{proof}
Let $\sigma$ be chosen uniformly at random from $S_n$.
By conditioning on the length of the cycle containing $1$, and using
Lemma~\ref{lemma:1cycle}(iii), we obtain the recurrence
\begin{equation}\label{eq:1rec} s_k(n) = \frac{1}{n} \sum_{j=1}^{k-1} s_k(n-j)
\end{equation}
for $n \ge k-1$. A simple calculation shows that
\begin{align*} s_k(n) - s_k(n+1) &=
\frac{1}{n(n+1)} \sum_{j=1}^{k-1} s_k(n-j) + \frac{s_k(n-(k-1))}{n} - \frac{s_k(n)}{n+1}
\\
&\ge \frac{s_k(n-(k-1)) - s_k(n)}{n}.
\end{align*}
Hence, if
\[ s_k(n-(k-1))
\ge \cdots \ge s_k(n-1) \ge s_k(n),
\]
then $s_k(n) \ge s_k(n+1)$. It is clear
that $s_k(n) = 1$ if $n \le k-1$, and so it follows by induction on $n$
that
 $s_k(n)$ is a decreasing function of $n$.
Now, by \eqref{eq:1rec} we have
\begin{equation}
\label{eq:skn_rec} s_k(n) \le \frac{k-1}{n} s_k(n-(k-1))
\end{equation}
for all $n \ge k-1$. Let $n = t(k-1) + r$ where $0 \le r < k-1$.
Repeated application of~\eqref{eq:skn_rec} gives
\begin{align*} s_k(t(k-1)+r) &\le \frac{k-1}{t(k-1)+r}\,
\frac{k-1}{(t-1)(k-1)+r} \,
\ldots \,\frac{k-1}{(k-1) + r}s_k(r)
 \\ &= \frac{1}{t+r/(k-1)}\,
\frac{1}{(t-1)+r/(k-1)}\, \ldots \,\frac{1}{1 + r/(k-1)} \\ &\le
\frac{1}{t!} \end{align*} as required.
\end{proof}

Using the elementary inequality
\begin{equation}
\label{eq:factorial_estimate}
t! \ge \Bigl( \frac{t}{\mathrm{e}} \Bigr)^t \quad\text{for $t \ge 1$},
\end{equation}
we derive the following corollary.

\begin{corollary}\label{cor:small_cycles}
For all $n, k \in \mathbf{N}$ with $k \ge 2$ we have
\[ s_k(n) \le \Bigl( \frac{\e}{t} \Bigr)^{t} \]
where $t = \lfloor n/(k-1) \rfloor$. \hfill$\square$
\end{corollary}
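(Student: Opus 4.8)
The plan is to derive the bound directly from the two results immediately preceding it: Proposition~\ref{prop:small_cycles}, which gives the upper bound $s_k(n) \le 1/t!$, and the elementary factorial estimate~\eqref{eq:factorial_estimate}. First I would note that, since $k \ge 2$, the integer $t = \lfloor n/(k-1) \rfloor$ is well defined, and that $t \ge 1$ precisely when $n \ge k-1$. Proposition~\ref{prop:small_cycles} then supplies $s_k(n) \le 1/t!$ with no further work.

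Next, for $t \ge 1$ I would invoke~\eqref{eq:factorial_estimate} in the form $t! \ge (t/\e)^t$. Taking reciprocals of these two strictly positive quantities reverses the inequality, giving $1/t! \le (\e/t)^t$. Chaining this with the bound from the Proposition yields $s_k(n) \le (\e/t)^t$, which is exactly the assertion of the corollary.

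Since the whole argument is a one-line combination of two stated facts, I do not expect any genuine obstacle. The only point requiring a moment's care is the boundary case $t = 0$, which occurs exactly when $n < k-1$. In that range every permutation of $\Omega_n$ has all cycles of length at most $n < k$, so $s_k(n) = 1 = 1/0!$, consistent with the Proposition; the right-hand side $(\e/t)^t$ is then to be read under the usual convention $0^0 = 1$, under which it too equals $1$. For all remaining values of $n$ and $k$ the estimate~\eqref{eq:factorial_estimate} applies verbatim, and the derivation above completes the proof.
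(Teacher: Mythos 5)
Your proposal is correct and is essentially the paper's own argument: the corollary is stated there as an immediate consequence of Proposition~\ref{prop:small_cycles} together with the estimate $t! \ge (t/\e)^t$ from~\eqref{eq:factorial_estimate}. Your extra remark on the degenerate case $t=0$ is a reasonable point of care that the paper leaves implicit, but it does not change the substance of the argument.
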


The following proposition leads to bounds on $s_k(n)$ that
are asymptotically poorer than
Corollary~\ref{cor:small_cycles}, but are
stronger when $n$ is small; we  use these bounds
in the proofs of Theorem~\ref{thm:conjugate_uniform_bound}
and Theorem~\ref{thm:conj_comm_uniform_bound}.

\begin{proposition}\label{prop:nsk_decreasing}
Let $k \in \mathbf{N}$ be
such that $k \ge 2$. Suppose that there exists $n_0\in \mathbf{N}$ such that
\[
ns_k(n)\geq(n+1)s_k(n+1)\text{ for }n\in\{n_0,n_0+1,\ldots ,n_0+k-2\}.
\]
Then $ns_k(n)\geq(n+1)s_k(n+1)$ for all $n\geq n_0$.
\end{proposition}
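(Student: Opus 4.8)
The plan is to recast the statement in terms of the sequence $a_n = n s_k(n)$ and to prove $a_n \ge a_{n+1}$ for all $n \ge n_0$ by strong induction, using the hypothesis on the window $\{n_0,\ldots,n_0+k-2\}$ as the base case. The starting point is to rewrite the recurrence~\eqref{eq:1rec} in the form $a_n = \sum_{j=1}^{k-1} s_k(n-j)$, which holds for $n \ge k-1$. Subtracting the analogous expression for $a_{n+1}=\sum_{j=1}^{k-1}s_k(n+1-j)$, all but two of the terms cancel, leaving the key identity
\[ a_n - a_{n+1} = s_k\bigl(n-(k-1)\bigr) - s_k(n), \qquad n \ge k-1. \]
This identity is the crux: it shows that $a_n - a_{n+1} \ge 0$ is equivalent to $s_k$ not increasing across the $k-1$ consecutive steps from $n-(k-1)$ to $n$.

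Next I would record the elementary observation that $a_m \ge a_{m+1}$ already forces $s_k(m) \ge s_k(m+1)$: from $m s_k(m) = a_m \ge a_{m+1} = (m+1)s_k(m+1) \ge m\, s_k(m+1)$, together with $s_k(m+1)\ge 0$, dividing by $m$ gives the claim. With this the induction is immediate. The base cases $n\in\{n_0,\ldots,n_0+k-2\}$ are precisely the hypothesis. For the inductive step, take $n \ge n_0+k-1$ and assume $a_m \ge a_{m+1}$ for every $m$ with $n_0 \le m \le n-1$. The $k-1$ indices $m = n-(k-1),\ldots,n-1$ all lie in this range because $n-(k-1)\ge n_0$, so the observation yields $s_k(m)\ge s_k(m+1)$ for each of them, and chaining these comparisons gives $s_k(n-(k-1)) \ge s_k(n)$. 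The key identity then delivers $a_n - a_{n+1}\ge 0$, completing the step.

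The main thing to get right is not analytic but combinatorial bookkeeping: establishing the telescoping identity correctly, and noticing that the window of length exactly $k-1$ in the hypothesis is precisely what is needed to seed the $k-1$ consecutive comparisons $s_k(m)\ge s_k(m+1)$ that appear when the identity is unwound. One should also check that \eqref{eq:1rec} is available at every index invoked, which is automatic since $n\ge n_0+k-1\ge k-1$ throughout the inductive step. I expect no real obstacle beyond this matching of the window length to the depth of the recurrence.
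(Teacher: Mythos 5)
Your proof is correct. It shares the overall architecture of the paper's argument --- a strong induction on $n$ whose base case is exactly the hypothesised window of $k-1$ consecutive indices, driven by the recurrence \eqref{eq:1rec} --- but the inductive step is carried out differently. The paper writes $(x+1)s_k(x+1)=\sum_{j=1}^{k-1}s_k(x+1-j)$, inserts the factor $\frac{x+1-j}{x+1-j}$ into each term, applies the inductive hypothesis $(x+1-j)s_k(x+1-j)\le (x-j)s_k(x-j)$ termwise, and then enlarges each denominator from $x+1-j$ to $x-j$ to recover $xs_k(x)$. You instead telescope the two sums to obtain the exact identity $ns_k(n)-(n+1)s_k(n+1)=s_k(n-(k-1))-s_k(n)$, and reduce the inductive step to the monotonicity of $s_k$ itself over the window, which you extract from the inductive hypothesis via the observation that $ms_k(m)\ge(m+1)s_k(m+1)$ forces $s_k(m)\ge s_k(m+1)$. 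Your identity is a sharper, equality version of the inequality $s_k(n)-s_k(n+1)\ge\bigl(s_k(n-(k-1))-s_k(n)\bigr)/n$ that the paper derives in the proof of Proposition~\ref{prop:small_cycles}, and it makes transparent why the hypothesis window must have length exactly $k-1$; the paper's termwise estimate is slightly more direct but leaves that matching implicit. Both arguments correctly verify that \eqref{eq:1rec} is available at every index used, since $n\ge n_0+k-1\ge k-1$ throughout the inductive step.
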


\begin{proof}
Let $x\geq n_0+k-1$, and suppose, by way of an inductive hypothesis, that $ns_k(n)\geq(n+1)s_k(n+1)$ for $n\in\{n_0,n_0+1,\ldots ,x-1\}$. Then, by~\eqref{eq:1rec},
\begin{align*}
(x+1)s_k(x+1)&=\sum_{j=1}^{k-1}s_k(x+1-j)\\
&=\sum_{j=1}^{k-1}\frac{(x+1-j)s_k(x+1-j)}{x+1-j}\\
&\leq\sum_{j=1}^{k-1}\frac{(x-j)s_k(x-j)}{x+1-j}\\
&<\sum_{j=1}^{k-1}\frac{(x-j)s_k(x-j)}{x-j}\\
&=xs_k(x).
\end{align*}
Hence the proposition follows by induction.
\end{proof}
In fact, it is always the case that $ns_k(n)\geq(n+1)s_k(n+1)$ for $n\geq k-1$. The authors have a combinatorial proof of this fact by means of explicitly defined bijections; because of the length of this proof we prefer the 
simpler approach adopted above.

\section{An inequality on $\kSn$}
\label{sec:ineq}

The bounds on $\kappa(S_n)$ in the
following proposition are critical to the proofs
of Theorems~\ref{thm:conjugate_uniform_bound} and~\ref{thm:Plimit}.

\begin{proposition}
\label{prop:conjugate_recurrence}
For all $n \in \mathbf{N}$ we have
\[
\kSn \le s_k(n)^2 + \sum_{\ell=k}^{n} \frac{\kappa(S_{n-\ell})}{\ell^2}.
\]
Moreover, if $k$ is such that $n/2 < k \le n$, then
\[
\kSn \ge \sum_{\ell=k}^n \frac{\kappa(S_{n-\ell})}{\ell^2}.
\]
\end{proposition}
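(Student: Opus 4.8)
I want to prove both inequalities in Proposition~\ref{prop:conjugate_recurrence} by conditioning on the length $L$ of the cycle of $\sigma$ (and similarly $\tau$) that contains the point~$1$, or more precisely by partitioning permutations according to the length of their \emph{longest} cycle. Let me think about what the two bounds are really saying. The probability $\kappa(S_n)$ is the chance that two independently chosen permutations $\sigma,\tau\in S_n$ are conjugate, i.e. have the same cycle type. The key structural idea is: if $\sigma$ has a cycle of length $\ell\ge k$, I can ``peel off'' a longest such cycle from both $\sigma$ and $\tau$ and ask whether the remainders are conjugate in $S_{n-\ell}$.

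\textbf{The lower bound.} Here the hypothesis $n/2 < k \le n$ is doing real work: it guarantees that a permutation has \emph{at most one} cycle of length $\ge k$, since two such cycles would consume more than $n$ points. So for each $\ell$ with $k\le \ell\le n$, the event ``$\sigma$ has a (necessarily unique) cycle of length exactly $\ell$'' is a disjoint event across different $\ell$. The plan is: condition on $\sigma$ and $\tau$ each having a cycle of length exactly $\ell$ (the same $\ell$, since conjugate permutations share cycle type). By Lemma~\ref{lemma:1cycle}(i)--(ii) the probability that $\sigma$ has an $\ell$-cycle, for $\ell\ge k > n/2$, is exactly $1/\ell$ (the expected number equals $1/\ell$ and there is at most one, so these coincide). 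Given that both have a unique $\ell$-cycle, conjugacy of $\sigma$ and $\tau$ in $S_n$ is equivalent to conjugacy of the induced permutations on the remaining $n-\ell$ points, which contributes a factor $\kappa(S_{n-\ell})$. The subtle point to get right is that the $\ell$-cycle can sit on any $\binom{n}{\ell}$ choices of support and that these choices match up correctly between $\sigma$ and $\tau$; I expect the clean way is to count ordered pairs directly and let the binomial and the $(\ell-1)!$ cancel against the $\ell^{-2}$, leaving $\kappa(S_{n-\ell})/\ell^2$ summed over the disjoint events. Summing over $\ell$ from $k$ to $n$ and discarding the (nonnegative) contribution from pairs where \emph{neither} has a long cycle gives the stated lower bound.

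\textbf{The upper bound.} Here $k$ is arbitrary, so I can no longer assume uniqueness. I partition all pairs $(\sigma,\tau)$ according to whether $\sigma$ has \emph{some} cycle of length $\ge k$. If neither does, both lie in the set of permutations with all cycles shorter than $k$, an event of probability $s_k(n)$ for each, contributing at most $s_k(n)^2$ to the probability of conjugacy (bounding the conjugacy probability on this sub-event crudely by the probability that both land there). If $\sigma$ does have a cycle of length $\ge k$, I single out the cycle of some fixed length $\ell\ge k$ containing the point~$1$; conditioning on $1$ lying in an $\ell$-cycle has probability $1/n$ by Lemma~\ref{lemma:1cycle}(iii), but the cleaner bookkeeping is again to sum the per-$\ell$ contributions and show each is at most $\kappa(S_{n-\ell})/\ell^2$ after peeling. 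The inequality (rather than equality) arises because without the uniqueness from $k>n/2$ the peeled decomposition overcounts, and because the event for $\sigma$ and the event for $\tau$ interact; I only need that the conjugacy probability on the ``long cycle'' part is \emph{bounded above} by the same sum $\sum_{\ell=k}^n \kappa(S_{n-\ell})/\ell^2$.

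\textbf{Main obstacle.} The genuinely delicate step is justifying the peeling bijection and its effect on conjugacy precisely: that removing a distinguished $\ell$-cycle from two permutations reduces conjugacy in $S_n$ to conjugacy in $S_{n-\ell}$, with the combinatorial factors (number of ways to choose and fill an $\ell$-cycle) cancelling exactly against $\ell^{-2}$. For the lower bound this must be an exact identity on the relevant sub-event, which is why the uniqueness hypothesis $n/2<k\le n$ is essential; for the upper bound I must make sure the overcounting only ever inflates the sum, so that the true probability is bounded above. I would set this up carefully in terms of counting conjugate ordered pairs, so that the algebra of factorials and binomial coefficients does the cancellation transparently rather than relying on informal ``peeling'' language.
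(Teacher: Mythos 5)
Your proposal is correct and follows essentially the same route as the paper: the paper formalises your ``peeling'' step via events $E(X,Y)$ that $\sigma$ and $\tau$ act as $\ell$-cycles on supports $X$ and $Y$ with conjugate restrictions to the complements, computes $\P(E(X,Y))=\binom{n}{\ell}^{-2}\kappa(S_{n-\ell})/\ell^2$ exactly as you anticipate (the binomials cancel on summing over supports), and then uses a union bound over $\ell\ge k$ plus the $s_k(n)^2$ term for the upper bound and disjointness of these events when $k>n/2$ for the lower bound. The ``delicate step'' you flag is handled precisely by the observation that, conditional on $\sigma$ acting as an $\ell$-cycle on $X$, its restriction to $\overline{X}$ is uniformly distributed, so no further bijective bookkeeping is needed.
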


\begin{proof}
Let $\sigma$ and $\tau$ be permutations of $\Omega_n$ chosen
independently and uniformly
at random.  Let $X,Y\subseteq \Omega_n$ be $\ell$-sets,
and write $\overline{X}$ and $\overline{Y}$ for the
complements of $X$ and $Y$ in $\Omega_n$, respectively. Let $E(X,Y)$ be
the event that $\sigma$ acts as an $\ell$-cycle on $X$, $\tau$ acts as
an $\ell$-cycle on $Y$ and the restrictions
$\overline{\sigma}$
and
$\overline{\tau}$
of
$\sigma$ and $\tau$ to $\overline{X}$ and $\overline{Y}$ respectively
have the same cycle structure. By Lemma~\ref{lemma:1cycle}(i),
the probability that $\sigma$ acts as
an $\ell$-cycle on $X$ and $\tau$ acts as an $\ell$-cycle on $Y$ is
$\ell^{-2} \binom{n}{\ell}^{-2}$.
Given that $\sigma$ and $\tau$ act as $\ell$-cycles on $X$
and $Y$ respectively, the permutations $\overline{\sigma}$
and $\overline{\tau}$ are
independently and uniformly distributed over
the symmetric groups on $\overline{X}$ and
$\overline{Y}$ respectively.
Hence the
probability that $\overline{\sigma}$
and $\overline{\tau}$ have the same cycle structure is precisely
$\kappa(S_{n-\ell})$. Thus
\[
\P(E(X,Y))=\binom{n}{\ell}^{-2} \frac{\kappa(S_{n-\ell})}{\ell^2} .
\]

If $\sigma$ and $\tau$ are conjugate, then either $\sigma$ and $\tau$
both have
cycles all of length strictly
less than~$k$, or there exist sets $X$ and $Y$ of
cardinality $\ell \ge k$ on which $\sigma$ and $\tau$ act as $\ell$-cycles
and such
that the restrictions of $\sigma$ to~$\overline{X}$
and $\tau$ to $\overline{Y}$ have the
same cycle structure. Therefore
\begin{align*}
\kSn &\le
s_k(n)^2+\sum_{\ell=k}^n\sum_{|X|=\ell}\sum_{|Y|=\ell}\P(E(X,Y))\\
&=s_k(n)^2+\sum_{\ell=k}^n\sum_{|X|=\ell}\sum_{|Y|=\ell} 
\binom{n}{\ell}^{-2} \frac{\kappa(S_{n-\ell})}{\ell^2} \\
& = s_k(n)^2+\sum_{\ell=k}^n\frac{\kappa(S_{n-\ell})}{\ell^2}.
\end{align*}
This establishes the first inequality of the proposition.

When $k > n/2$ the events $E(X,Y)$ with $|X|=|Y|\geq k$ are disjoint,
since a permutation can contain at most one cycle of length greater
than $n/2$. Thus
\begin{align*}
\kSn &\geq
\sum_{\ell=k}^n\sum_{|X|=\ell}\sum_{|Y|=\ell}\P(E(X,Y))\\
&=\sum_{\ell=k}^n \frac{\kappa(S_{n-\ell})}{\ell^2},
\end{align*}
as required.
\end{proof}

\section{A uniform bound on $\kSn$}
\label{sec:kappa_upper_bound}

In this section, we prove Theorem~\ref{thm:conjugate_uniform_bound}.
We shall require the following lemma.

\newcommand{\betak}{k}
\begin{lemma}\label{lemma:pfraclemma}
Let $n \in \mathbf{N}$ and let $0 < \betak < n/2$. Then 
\[ \sum_{\ell = \lceil n/2\rceil}^{n-\betak-1} \frac{1}{\ell^2(n-\ell)^2}
\le  \frac{1}{n^2\betak} + \frac{2 \log (n/\betak)}{n^3}.
\]
\end{lemma}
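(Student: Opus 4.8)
The plan is to compare the sum with an integral and then evaluate that integral exactly by partial fractions, exploiting a convenient cancellation at the midpoint $n/2$. Write $f(x) = 1/\bigl(x^2(n-x)^2\bigr)$. Since $x(n-x)$ has derivative $n-2x$, it is decreasing on $(n/2, n)$, and hence $f$ is increasing there. For an increasing function one has $f(\ell) \le \int_\ell^{\ell+1} f(x)\,dx$, so summing over $\ell$ from $\lceil n/2\rceil$ to $n-k-1$ gives
\[ \sum_{\ell=\lceil n/2\rceil}^{n-k-1} \frac{1}{\ell^2(n-\ell)^2} \le \int_{\lceil n/2\rceil}^{n-k} f(x)\,dx \le \int_{n/2}^{n-k} f(x)\,dx, \]
the last step using $\lceil n/2\rceil \ge n/2$ together with $f \ge 0$. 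The hypothesis $0 < k < n/2$ guarantees $n/2 < n-k < n$, so the interval of integration lies in $(0,n)$ where $f$ is finite; if the range of summation is empty the claimed inequality holds trivially, since its right-hand side is positive.

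Next I would evaluate the integral. The partial fraction identity
\[ \frac{1}{x^2(n-x)^2} = \frac{1}{n^2}\Bigl(\frac{1}{x^2} + \frac{1}{(n-x)^2}\Bigr) + \frac{2}{n^3}\Bigl(\frac{1}{x} + \frac{1}{n-x}\Bigr), \]
which follows from $1/x + 1/(n-x) = n/\bigl(x(n-x)\bigr)$, gives the antiderivative
\[ F(x) = -\frac{1}{n^2 x} + \frac{1}{n^2(n-x)} + \frac{2}{n^3}\log\frac{x}{n-x}. \]
The crucial point is that the symmetry of $F$ about $x = n/2$ forces $F(n/2) = 0$: the first two terms cancel and the logarithm vanishes. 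At the other endpoint, $F(n-k) = \frac{1}{n^2 k} - \frac{1}{n^2(n-k)} + \frac{2}{n^3}\log\frac{n-k}{k}$.

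Finally I would assemble the bound. Discarding the negative term $-1/\bigl(n^2(n-k)\bigr)$ and using $\log\frac{n-k}{k} \le \log\frac{n}{k}$ yields
\[ \int_{n/2}^{n-k} f(x)\,dx \le \frac{1}{n^2 k} + \frac{2\log(n/k)}{n^3}, \]
which is exactly the claimed inequality. I do not expect any real obstacle beyond correctly setting up the monotonicity-based integral comparison; the one feature to watch is the exact cancellation $F(n/2)=0$, which is what makes the midpoint the natural lower limit and is precisely what produces the sharp constants $1$ and $2$, rather than the weaker constants one obtains from cruder termwise estimates (for instance, bounding $1/\ell^2 \le 4/n^2$ on the range $\ell \ge n/2$ loses a factor of $4$ in the leading term).
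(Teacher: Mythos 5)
Your proof is correct and takes essentially the same route as the paper's: both bound the sum by $\int_{n/2}^{n-k} \frac{\mathrm{d}x}{x^2(n-x)^2}$, evaluate via the same partial fraction decomposition, and discard the same negative terms at the end, the only cosmetic difference being that the paper first rescales via $y = nx$ before doing the partial fractions. Your explicit justification of the sum-to-integral comparison (monotonicity of the integrand on $(n/2,n)$) and of the degenerate empty-sum case are details the paper leaves implicit.
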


\begin{proof}
We have
\begin{align*}
\sum_{\ell = \lceil n/2\rceil}^{n-\betak-1} \frac{1}{\ell^2(n-\ell)^2}
&\le \int_{n/2}^{n-\betak} \frac{\mathrm{d}y}{y^2(n-y)^2} \\
&=
\frac{1}{n^3} \int_{1/2}^{1-\betak/n} \frac{ \mathrm{d}x}{x^2(1-x)^2} \\
&= \frac{1}{n^3} \int_{1/2}^{1-\betak/n} \Bigl(  \frac{1}{x^2}
+ \frac{1}{(1-x)^2}
+ \frac{2}{x} + \frac{2}{1-x} \Bigr) \mathrm{d}x \\
&= \frac{1}{n^3}\Bigl(- \frac{1}{x} +
\frac{1}{1-x} 
+ 2\log x - 2\log (1-x)
\Bigr) \Bigl|_{1/2}^{1-\betak/n} \\
&= \frac{1}{n^3} \Bigl( \frac{n}{\betak} - \frac{1}{1-\betak/n} + 2\log (1-\betak/n)
- 2\log (\betak/n)\Bigr) \\
&\le \frac{1}{n^2\betak} + \frac{2 \log (n/\betak)}{n^3}
\end{align*}
as required.
\end{proof}

We shall also need the computational results contained in
Lemma~\ref{lemma:kappa_comp} below.
There is no particular significance to the choice
of the parameters $n=300$ and $k=15$ in this lemma,  except that
these are convenient numbers to work with,
and they bring the amount
of computational work needed to verify the results
close to a minimum. A similar remark applies to
the use of $n=60$ in parts (ii) and (iii).
(The reader may be interested to
know that the minimum $n$ for which our
proof strategy will work is $n=242$; this requires the
choice $k=14$.)

 Recall that we define $C_\kappa = 13^2
\kappa(S_{13})$. Whenever in this paper we state a
bound as a number written in decimal notation,
it is sharp to five decimal places.

\begin{lemma}
\label{lemma:kappa_comp}
We have
\begin{thmlist}
\item $\kappa(S_{n}) \le C_\kappa / n^2$ for all $n \le 300$;

\item $60 s_{15}(60)
= \frac{158929798034197186400893117108816122671}{833175235266670978029768442202788608000}
< 0.19076$;

\item $n s_{15}(n)\geq (n+1)s_{15}(n+1)$ for $14\leq n\leq 60$;

\item $\sum_{m=0}^{15} \kappa(S_m)
= \frac{4675865182689145531283}{1187508508836249600000}
< 3.93755$;

\item $C_\kappa
= \frac{314540139254371141}{57360633200640000}$
and $5.48355 < C_\kappa < 5.48356$.
\hfill$\Box$
\end{thmlist}

\end{lemma}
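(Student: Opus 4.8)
The plan is to treat all five parts as exact, finite rational computations, justified by two ingredients already in hand: the class formula \eqref{eq:kappa} specialised to the symmetric group, and the recurrence \eqref{eq:1rec} for $s_k(n)$. For $S_n$ the conjugacy classes are indexed by the cycle types $\lambda=(1^{a_1}2^{a_2}\cdots)$ of $n$, and a permutation of type $\lambda$ has centraliser of order $z_\lambda=\prod_{i\ge 1} i^{a_i}a_i!$. Hence \eqref{eq:kappa} gives the exact value
\[
\kappa(S_n)=\sum_{\lambda\vdash n}\frac{1}{z_\lambda^2},
\]
which is the basic quantity to be evaluated. The numbers $s_{15}(n)$ needed for parts (ii) and (iii) come directly from \eqref{eq:1rec} together with the initial conditions $s_{15}(n)=1$ for $n\le 14$. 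All arithmetic is performed over $\mathbf{Q}$, so that the fractions recorded in the statement are exact and each decimal bound follows from a single comparison of integers.

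First I would compute $\kappa(S_m)$ for the small values needed in parts (iv) and (v): evaluating $\sum_{\lambda\vdash m}1/z_\lambda^2$ for $0\le m\le 15$ is a short enumeration of partitions, from which $\sum_{m=0}^{15}\kappa(S_m)$ and $C_\kappa=13^2\kappa(S_{13})$ emerge as explicit rationals; reducing these and comparing with the stated decimals settles (iv) and (v). Parts (ii) and (iii) are then disposed of by unwinding \eqref{eq:1rec}: one tabulates the rationals $s_{15}(n)$ for $0\le n\le 61$, reads off $60\,s_{15}(60)$ for (ii), and verifies the finitely many inequalities $n\,s_{15}(n)\ge (n+1)s_{15}(n+1)$ for $14\le n\le 60$ for (iii). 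Note that (iii) furnishes exactly the hypotheses of Proposition~\ref{prop:nsk_decreasing}, which then propagates the monotonicity to all $n\ge 14$.

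The substantial part is (i), the verification of $n^2\kappa(S_n)\le C_\kappa$ for every $n\le 300$; here direct enumeration of partitions is hopeless, since $p(300)$ is of order $10^{18}$. Instead I would use the generating function
\[
\sum_{n\ge 0}\kappa(S_n)\,x^n=\prod_{i\ge 1}\Bigl(\sum_{a\ge 0}\frac{x^{ia}}{i^{2a}(a!)^2}\Bigr),
\]
which is immediate from the product form of $1/z_\lambda^2$ (the multiplicity of each part is chosen independently). Truncating each factor at degree $300$, so that the $i$th factor contributes only $\lfloor 300/i\rfloor+1$ terms and factors with $i>300$ contribute $1$, and multiplying these truncations together in exact rational arithmetic yields $\kappa(S_0),\ldots,\kappa(S_{300})$ without any partition enumeration. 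It then remains to confirm the single rational inequality $n^2\kappa(S_n)\le C_\kappa$ for each of these values; since $n^2\kappa(S_n)$ attains its maximum at $n=13$, the inequalities hold with equality there and with room to spare elsewhere.

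The main obstacle is therefore computational rather than mathematical: the numerators and denominators occurring in these evaluations grow on the scale of $(n!)^2$, so the calculation must be carried out in arbitrary-precision integer arithmetic rather than floating point, and its correctness guarded carefully. I would meet this by running the computation in exact arithmetic (as in the Haskell implementation referenced in \S\ref{sec:introduction}) and independently reproducing the tabulated values of $\kappa(S_n)$ and $s_{15}(n)$ in Mathematica, so that the reduced fractions stated in the lemma can be checked against two separate implementations.
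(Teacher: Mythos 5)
Your treatment of parts (ii)--(v) coincides with the paper's: these are finite exact rational computations from \eqref{eq:kappa} and the recurrence \eqref{eq:1rec}, and your observation that (iii) supplies precisely the hypotheses of Proposition~\ref{prop:nsk_decreasing} matches the way the lemma is used later. For part (i), however, you take a genuinely different route. The paper computes the exact value of $\kSn$ only for $n\le 80$; for $80<n\le 300$ it instead invokes the upper bound of Proposition~\ref{prop:conjugate_recurrence}, namely $\kSn\le s_k(n)^2+\sum_{\ell=k}^{n}\kappa(S_{n-\ell})/\ell^2$, choosing for each $n$ whichever $k$ gives the strongest bound (for instance $k=13$ at $n=81$ and $k=39$ at $n=300$) and checking that the resulting bound lies below $C_\kappa/n^2$. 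You propose instead to compute every $\kappa(S_n)$ for $n\le 300$ exactly via the Euler product
\[
\sum_{n\ge 0}\kappa(S_n)x^n=\prod_{i\ge 1}\sum_{a\ge 0}\frac{x^{ia}}{i^{2a}(a!)^2},
\]
truncated at degree $300$. This identity is correct (it is just the product form of $1/z_\lambda^2$ over the part multiplicities), and it rightly avoids enumerating the roughly $10^{16}$ partitions of $300$. What the paper's hybrid buys is a lighter computation: the exact values are only needed up to $n=80$, where the integers $n!^{\,2}\kappa(S_n)$ have a few hundred digits, while the recursive bounds for larger $n$ reuse those small values cheaply. What your approach buys is uniformity --- no per-$n$ optimisation of $k$, and no reliance on Proposition~\ref{prop:conjugate_recurrence} inside the lemma itself --- at the cost of exact arithmetic with denominators on the scale of $(300!)^2$, which is heavier but entirely practical. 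One small caution: your closing remark that the inequalities in (i) hold ``with room to spare'' because $n^2\kSn$ is maximised at $n=13$ is an observed outcome of the computation, not an input to it; since (i) is a finite list of rational comparisons this is not a gap, but it should be phrased as a verification rather than an assumption.
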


\begin{proof} 
All of these results except (i) are routine computations;
(ii) and~(iii) 
follow from the recurrence in \eqref{eq:1rec}, and
(iv) and (v) follow from~\eqref{eq:kappa} in the obvious way.
For (i), we use~\eqref{eq:kappa} to compute the exact value of
$\kappa(S_n)$ for $n\le 80$. For larger $n$
we use the bound in
Proposition~\ref{prop:conjugate_recurrence}, applying it
with whichever choice of $k$ gave the strongest result. For
example, when $n = 81$
the optimal choice of $k$ is $13$, and when $n=300$ it is $39$.
The resulting upper bounds  
easily imply (i).
All of these assertions may be verified using
the computer software mentioned in the introduction to this paper.
\end{proof}

\begin{proof}[Proof of Theorem~\ref{thm:conjugate_uniform_bound}]
We prove the theorem by induction on $n$.
By Lemma~\ref{lemma:kappa_comp}(i) the theorem holds if $n\le 300$,
and so we may assume that $n > 300$.
By Proposition~\ref{prop:conjugate_recurrence} in the
case $k = 15$ we have
\[
\kSn\leq s_{15}(n)^2+ \sum_{\ell=15}^{n} \frac{\kappa(S_{n-\ell})}{\ell^2},
\]
and so
\begin{equation}
\label{eq:kappa_summands} n^2 \kSn \leq n^2 s_{15}(n)^2
+ n^2 \sum_{\ell=n-15}^{n} \frac{\kappa(S_{n-\ell})}{\ell^2}
+ n^2 \sum_{\ell=15}^{n-16} \frac{\kappa(S_{n-\ell})}{\ell^2}.
\end{equation}

It follows from Proposition~\ref{prop:nsk_decreasing} and Lemma~\ref{lemma:kappa_comp}(iii)
that $ns_{15}(n)
\le 60s_{15}(60)$. Hence, by Lemma~\ref{lemma:kappa_comp}(ii), we
have $n^2 s_{15}(n)^2 \le 0.03639$.

Using Lemma~\ref{lemma:kappa_comp}(iv) to bound the second summand
in~\eqref{eq:kappa_summands}
we get
\[ 
 n^2 \sum_{\ell={n-15}}^{n} \frac{\kappa(S_{n-\ell})}{\ell^2}
\le \Bigl( \frac{n}{n-15} \Bigr)^2 \sum_{m=0}^{15}  \kappa(S_m)
\le \bigl(\frac{300}{285}\bigr)^2 \sum_{m=0}^{15} \kappa(S_m) \le 4.36294.
\]

For the third summand in~\eqref{eq:kappa_summands},
we use the inductive hypothesis to get
\[ n^2 \sum_{\ell=15}^{n-16} \frac{\kappa(S_{n-\ell})}{\ell^2}
\le n^2 \sum_{\ell = 15}^{n-16} \frac{C_\kappa}{\ell^2 (n-\ell)^2}.
\]
Using the symmetry in this sum, and then applying
Lemma~\ref{lemma:pfraclemma} in the case $k=15$, we get
\begin{align*}
n^2 \sum_{\ell = 15}^{n-16} \frac{1}{\ell^2 (n-\ell)^2}
 &\le 2n^2 \sum_{\ell = \lceil n/2\rceil}^{n-16} \frac{1}{\ell^2(n-\ell)^2}
 + \frac{n^2}{15^2 (n-15)^2} \\
  &\le 2 \Bigl( \frac{1}{15} + \frac{2\log(n/15)}{n} \Bigr)
 + \frac{1}{15^2} \Bigl( \frac{300}{285} \Bigr)^2  
 \end{align*}
Since $\log (n/15) / n$ is decreasing for $n > 40$, it
follows from the upper bound for $C_\kappa$
in Lemma~\ref{lemma:kappa_comp}(v) that
\begin{align*}
 n^2 \sum_{\ell=15}^{n-16} \frac{\kappa(S_{n-\ell})}{\ell^2}
 &\le  C_\kappa \Bigl( \frac{2}{15} + \frac{4 \log(300/15)}{300}
 + \frac{300^2}{15^2 \cdot 285^2} \Bigr) \\
 &\le 0.97718
\end{align*}
Hence
\[ n^2 \kappa(S_n) \le 0.03639 + 4.36294 + 0.97718  = 5.37651 <
C_\kappa \]
and the theorem follows.
\end{proof}

\section{The limiting behaviour of $\kSn$}
\label{sec:kappa_limit}

Recall that we set $A_\kappa = \sum_{n=1}^\infty \kappa(S_n)$.
In this section, we prove Theorem~\ref{thm:Plimit}
by establishing the two propositions below.

\begin{proposition}\label{prop:Pliminf1}
\[ \liminf_{n \rightarrow \infty} n^2 \kSn \ge A_\kappa.\]
\end{proposition}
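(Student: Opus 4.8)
The plan is to extract the lower bound from Proposition~\ref{prop:conjugate_recurrence} and pass to the limit term by term. First I would take the second inequality of Proposition~\ref{prop:conjugate_recurrence}, which is valid whenever $n/2 < k \le n$, and reindex its sum by setting $m = n - \ell$. Since $\ell$ runs from $k$ to $n$, the new variable $m$ runs from $0$ to $n-k$, and $\ell^2 = (n-m)^2$; after multiplying through by $n^2$ this recasts the bound as
\[
n^2 \kappa(S_n) \ge \sum_{m=0}^{n-k} \frac{n^2}{(n-m)^2}\,\kappa(S_m).
\]

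Next I would choose an admissible cut-off $k = k(n)$ that keeps $n/2 < k(n) \le n$ while still allowing the range of the reindexed sum to grow without bound. The choice $k(n) = \lfloor n/2\rfloor + 1$ works, since then $n - k(n) = \lceil n/2\rceil - 1 \to \infty$. With this choice, fix any $M \in \mathbf{N}$; for all $n$ large enough that $n - k(n) \ge M$, the nonnegativity of every summand lets me discard the terms with $m > M$, giving
\[
n^2 \kappa(S_n) \ge \sum_{m=0}^{M} \frac{n^2}{(n-m)^2}\,\kappa(S_m).
\]
Now I would take $\liminf_{n\to\infty}$ of both sides. The right-hand side is a \emph{finite} sum in which each coefficient satisfies $n^2/(n-m)^2 \to 1$ as $n\to\infty$, so the right-hand side converges to $\sum_{m=0}^M \kappa(S_m)$. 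Hence $\liminf_{n\to\infty} n^2\kappa(S_n) \ge \sum_{m=0}^M \kappa(S_m)$ for every $M$, and letting $M\to\infty$ (the $m=0$ term being nonnegative) yields $\liminf_{n\to\infty} n^2\kappa(S_n) \ge \sum_{m=0}^\infty \kappa(S_m) \ge A_\kappa$, as required; convergence of $A_\kappa$ is guaranteed by Theorem~\ref{thm:conjugate_uniform_bound}.

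The one genuine subtlety — the step to guard against — is the interchange of the limit in $n$ with the infinite summation over $m$, which is not valid by naive termwise passage. The device of truncating at a fixed $M$ first (legitimate precisely because each $\kappa(S_m)$ is nonnegative) and only afterwards letting $M\to\infty$ is what sidesteps this difficulty, playing the role of Fatou's lemma for series. I would also record the small compatibility check that the two demands on the cut-off, namely $k(n) > n/2$ and $n - k(n) \to \infty$, hold simultaneously for the chosen $k(n)$ once $n$ is large.
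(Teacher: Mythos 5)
Your proposal is correct and follows essentially the same route as the paper: both extract the lower bound from Proposition~\ref{prop:conjugate_recurrence}, reindex by $m=n-\ell$, and let the truncation point tend to infinity. The only cosmetic difference is that the paper first bounds $1/(n-m)^2 \ge 1/n^2$ (with $k=\lfloor 3n/4\rfloor$) so that the right-hand side becomes a partial sum of the convergent series and the limit passage is immediate, whereas you keep the coefficients $n^2/(n-m)^2$ and handle the limit by truncating at a fixed $M$ first.
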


\begin{proof}
It follows from the second part
of Proposition~\ref{prop:conjugate_recurrence}
that, if $k > n/2$, then
\[n^2 \kSn \ge \sum_{m =0}^{n-k} \kappa(S_m).\]
Hence taking $k = \lfloor 3n/4\rfloor$ and letting $n \rightarrow \infty$
we see that
\[ \liminf_{n \rightarrow \infty} n^2\kSn \ge \sum_{m=0}^\infty
\kappa(S_m) =A_\kappa.
\]
\end{proof}

\begin{proposition}
\label{prop:Pliminf2}
\[ \limsup_{n \rightarrow \infty} n^2 \kSn \le A_\kappa. \]
\end{proposition}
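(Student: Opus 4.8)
The plan is to feed the first inequality of Proposition~\ref{prop:conjugate_recurrence} into a \emph{double} limit: first let $n \to \infty$ with the cutoff $k$ held fixed, and only afterwards let $k \to \infty$. Fix an integer $k \ge 2$. Multiplying that inequality by $n^2$ and re-indexing the sum by $m = n - \ell$ gives
\[
n^2 \kSn \le n^2 s_k(n)^2 + \sum_{m=0}^{n-k} \frac{n^2}{(n-m)^2}\, \kappa(S_m).
\]
I would split the final sum at $m = \lfloor n/2 \rfloor$ into a \emph{head} $\mathrm{I}$ (the terms with $0 \le m \le \lfloor n/2\rfloor$) and a \emph{tail} $\mathrm{II}$ (the terms with $\lfloor n/2\rfloor < m \le n-k$), then bound the three resulting pieces separately and take $\limsup_{n\to\infty}$ of each.

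For the leading term, Proposition~\ref{prop:small_cycles} (or Corollary~\ref{cor:small_cycles}) shows that $s_k(n)$ decays faster than any power of $n$ for fixed $k$, so $n^2 s_k(n)^2 \to 0$. For the head $\mathrm{I}$, the key point is that for $0 \le m \le \lfloor n/2\rfloor$ one has $1 \le n^2/(n-m)^2 \le 4$, with $n^2/(n-m)^2 \to 1$ for each fixed $m$; since $\sum_m \kappa(S_m) = A_\kappa$ converges (by Theorem~\ref{thm:conjugate_uniform_bound}), the summands are dominated by the summable sequence $4\kappa(S_m)$, so by dominated convergence for series $\mathrm{I} \to A_\kappa$. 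For the tail $\mathrm{II}$, I would use $\kappa(S_m) \le C_\kappa/m^2$ from Theorem~\ref{thm:conjugate_uniform_bound} and re-index by $\ell = n - m$ (which then runs over $k \le \ell \le \lceil n/2\rceil - 1$) to obtain
\[
\mathrm{II} \le C_\kappa\, n^2 \sum_{\ell=k}^{\lceil n/2\rceil - 1} \frac{1}{\ell^2 (n-\ell)^2}.
\]
Splitting off the single term $\ell = k$ and applying Lemma~\ref{lemma:pfraclemma} to the remainder bounds this by $C_\kappa\bigl(n^2/(k^2(n-k)^2) + 1/k + 2\log(n/k)/n\bigr)$, whose $\limsup$ as $n\to\infty$ is at most $C_\kappa(1/k + 1/k^2)$.

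Combining the three estimates would give $\limsup_{n\to\infty} n^2 \kSn \le A_\kappa + C_\kappa(1/k + 1/k^2)$ for every fixed $k \ge 2$; since the left-hand side is independent of $k$, letting $k \to \infty$ then yields the proposition. The main structural subtlety — and the reason for the split at $n/2$ — is that one cannot afford to bound $\kappa(S_m)$ by $C_\kappa/m^2$ across the whole range: applying that estimate on the head would replace the sharp constant $A_\kappa$ by something strictly larger, whereas keeping $\kappa(S_m)$ intact there lets the weights collapse to $1$ and recover exactly $A_\kappa$. The only genuine analytic work will be controlling the tail via Lemma~\ref{lemma:pfraclemma} and justifying the interchange of the limit in $n$ with the infinite sum defining $A_\kappa$; both become routine once the $n/2$ split isolates the bounded-weight regime (where $\kappa$ is kept) from the regime where $\kappa(S_m)$ must be estimated crudely.
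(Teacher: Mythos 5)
Your argument is correct, and it reaches the conclusion by a genuinely different route from the paper's. The paper also starts from the first inequality of Proposition~\ref{prop:conjugate_recurrence}, but it takes a single, $n$-dependent cutoff $k = \lfloor n/\log n\rfloor$: this one choice simultaneously forces $n s_k(n) \to 0$ (via Corollary~\ref{cor:small_cycles}, with a short computation showing $\log(n s_k(n)) \to -\infty$), makes the middle range $k \le \ell \le n-k-1$ contribute $\mathrm{O}(\log n/n^3)$ after Lemma~\ref{lemma:pfraclemma}, and lets the head $m \le k$ grow to exhaust all of $A_\kappa$ while its weights $n^2/(n-m)^2$ stay within $(1-\epsilon)^{-2}$ of $1$; the proof then closes with an $\epsilon \to 0$ argument. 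You instead freeze $k$, split the sum at $m = \lfloor n/2\rfloor$ rather than at $m = k$, absorb the head into $A_\kappa$ exactly via dominated convergence for series (the weights are bounded by $4$ there, which is why you must keep $\kappa(S_m)$ intact rather than invoke Theorem~\ref{thm:conjugate_uniform_bound}), and carry a residual error $C_\kappa(1/k + 1/k^2)$ that only disappears in the second limit $k \to \infty$. The two proofs use the same ingredients (the recurrence, the short-cycle bound, Theorem~\ref{thm:conjugate_uniform_bound}, and Lemma~\ref{lemma:pfraclemma}); yours trades the paper's delicately tuned cutoff and $\epsilon$-bookkeeping for an iterated limit and a Tannery-type interchange, which is arguably cleaner conceptually, while the paper's single-limit version is more self-contained (it never needs a dominated-convergence step) and its choice $k = n/\log n$ is closer to what one would need for a quantitative error term. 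Your re-indexing of the tail and the extraction of the single term $\ell = k$ before applying Lemma~\ref{lemma:pfraclemma} both check out, so there is no gap.
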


\begin{proof}
Let $k = \lfloor \frac{n}{\log n}\rfloor$.
By Proposition~\ref{prop:conjugate_recurrence} we have
\[ \kSn \le s_k(n)^2 + \sum_{\ell = k}^n \frac{\kappa(S_{n-\ell})}{\ell^2}. \]
By Proposition~\ref{prop:small_cycles} we have
\[ s_k(n) < \Bigl( \frac{\e}{t} \Bigr)^t \]
where $t = \lfloor \frac{n}{k-1} \rfloor$.
Writing $k = n/\log n + O(1)$ we have
$\lfloor \frac{n}{k-1} \rfloor = (\log n) \Bigl( 1 + \mathrm{O}
\bigl( \frac{\log n}{n} \bigr) \Bigr)$, and so 
\begin{align*}
\log (ns_k(n)) &< \log n + t(1-\log t) \\
              &= 2 \log n - \log n \log \log n + \log
              \Bigl( 1 + \mathrm{O}
\bigl( \frac{\log n}{n} \bigr) \Bigr) \\
& \rightarrow -\infty
\end{align*}
as $n \rightarrow \infty$. Hence $ns_k(n) \rightarrow 0$ as
$n\rightarrow \infty$.

We estimate the main sum in the same way as the proof
of Theorem~\ref{thm:conjugate_uniform_bound}. This gives
\begin{align*} \sum_{\ell = k}^n
\frac{\kappa(S_{n-\ell})}{\ell^2}
&\le
\sum_{\ell=n-k}^n
\frac{\kappa(S_{n-\ell})}{\ell^2} +
 \sum_{\ell = k}^{ n-k-1}
\frac{\kappa(S_{n-\ell})}{\ell^2}
\\
&\le
\sum_{m=0}^{k}
\frac{\kappa(S_{m})}{(n-m)^2} +
\sum_{\ell = k}^{n-k-1}
\frac{C_\kappa}{\ell^2(n-\ell)^2}
 \\
&\leq
\sum_{m=0}^{k} \frac{\kappa(S_m)}{(n-m)^2} +
\sum_{\ell = \lceil n/2\rceil}^{n-k-1}
\frac{2C_\kappa}{\ell^2(n-\ell)^2} +
\frac{C_\kappa}{k^2(n-k)^2}.
\end{align*}
By Lemma~\ref{lemma:pfraclemma},
 the second summand in the equation above is at most
$2C_\kappa \log n / n^3$.
It is easily seen from the identity
\[ \frac{n}{k(n-k)} = \frac{1}{k} + \frac{1}{n-k} \]
that $n^2/k^2(n-k)^2 \rightarrow 0$ as $n \rightarrow \infty$.
Moreover, given any $\epsilon \in \R$ such that $0 < \epsilon < 1$, we have
\[ n^2 \sum_{m=0}^k \frac{\kappa(S_m)}{(n-m)^2} \le \frac{1}{(1-\epsilon)^2}
\sum_{m=0}^{k} \kappa(S_m) \]
for all $n$ such that $1/\log n < \epsilon$.
These remarks show that
\begin{align*}
 \limsup_{n\rightarrow \infty} n^2\kSn &\le
 \limsup_{n\rightarrow \infty}\left(\frac{1}{(1-\epsilon)^2}\sum_{m=0}^{
 \lfloor n /\log n \rfloor}
 \kappa(S_m) \right) \\
&\leq \frac{1}{(1-\epsilon)^2} \sum_{m=0}^{\infty} \kappa(S_{m})\\
&= \frac{A_\kappa}{(1-\epsilon)^2}.
\end{align*}
Since $\epsilon$ was arbitrary, we conclude
that $\limsup_{n\rightarrow \infty}
n^2 \kSn \le A_\kappa$.
\end{proof}

\section{Bounds on $\rho(\Sym(\Omega_n))$}
\label{sec:comm_conj_bounds_for_permutations}

In this section we give the background results needed
to prove the results on $\rho(S_n)$ stated in
Theorems~\ref{thm:conj_comm_uniform_bound}
and~\ref{thm:comm_conj_Plimit}.


Recall that a permutation of a finite set is said to be
\emph{regular} if all its cycles have the same length; a permutation
$\sigma$ acts \emph{regularly} on a subset~$X$ if $X\sigma=X$ and the
restriction $\sigma|_X$ of $\sigma$ to $X$ is regular.
We denote by $r(\ell)$ the probability that
a permutation of $\Omega_\ell$, chosen uniformly at random,
is regular. The following lemma
and proposition are the analogue of Lemma~\ref{lemma:1cycle} 
for regular permutations.

\begin{lemma}
\label{lemma:ell-regular}
For all $\ell \in \mathbf{N}$ we have
\[ \frac{1}{\ell} \le r(\ell) \le \frac{1}{\ell} + \frac{2}{\ell^2}
+ \frac{c}{\ell^3}  \]
where $c = \e^3 / (1-\e/3)$.
\end{lemma}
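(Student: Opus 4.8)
The plan is to bound $r(\ell)$, the probability that a uniformly random permutation of $\Omega_\ell$ is regular, by first identifying exactly which cycle types are regular and then estimating the probabilities of the relevant types. A permutation of $\Omega_\ell$ is regular precisely when all its cycles have a common length $d$, which forces $d \mid \ell$ and the permutation to consist of $\ell/d$ cycles of length $d$. Thus $r(\ell) = \sum_{d \mid \ell} p(\ell, d)$, where $p(\ell, d)$ is the probability that a random permutation has cycle type $(d^{\ell/d})$. The number of such permutations is $\ell! \big/ \bigl( d^{\ell/d} (\ell/d)! \bigr)$, so $p(\ell, d) = 1 \big/ \bigl( d^{\ell/d} (\ell/d)! \bigr)$.

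For the lower bound $r(\ell) \ge 1/\ell$, I would simply retain the single term $d = \ell$ (the $\ell$-cycles), which contributes $p(\ell, \ell) = 1/\ell$; since all terms are nonnegative, $r(\ell) \ge 1/\ell$ follows immediately. The substance of the lemma is the upper bound. First I would isolate the two largest contributions. The $d = \ell$ term gives $1/\ell$, and the next candidate is $d = \ell/2$ (when $\ell$ is even), giving $p(\ell, \ell/2) = 1/\bigl( (\ell/2)^2 \cdot 2! \bigr) = 2/\ell^2$, which accounts for the $2/\ell^2$ term in the stated bound. The claim is then that the remaining divisors contribute at most $c/\ell^3$ with $c = \e^3/(1-\e/3)$.

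The key estimate is therefore to bound $\sum_{d \mid \ell,\ d \le \ell/3} p(\ell, d)$. Here I would write $m = \ell/d \ge 3$, so the term is $1/\bigl( (\ell/m)^m\, m! \bigr)$, and I expect to bound this crudely by $(\e/\ell)^m$ using $m! \ge (m/\e)^m$ from~\eqref{eq:factorial_estimate} together with $(\ell/m)^m \cdot (m/\e)^m = (\ell/\e)^m$. Summing over all $m$ from $3$ upward (not merely over divisors, which only increases the sum) gives a geometric series $\sum_{m \ge 3} (\e/\ell)^m = (\e/\ell)^3 \big/ (1 - \e/\ell)$. For $\ell \ge 3$ one has $1 - \e/\ell \ge 1 - \e/3$, whence the tail is at most $\bigl( \e^3/(1-\e/3) \bigr)/\ell^3 = c/\ell^3$, matching the stated constant. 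The main obstacle is the bookkeeping at small $\ell$: the inequality $m/\ell < 1$ needed for the geometric bound requires $\ell > \e \approx 2.718$, so I would verify the cases $\ell = 1, 2$ (and possibly $\ell = 3$, where the $d=1$ term first appears) directly against the claimed bound, checking that the asserted inequality holds numerically there, before applying the geometric estimate uniformly for $\ell \ge 3$.
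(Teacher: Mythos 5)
Your proposal is correct and follows essentially the same route as the paper: the same formula $r(\ell)=\sum_{m\mid \ell} m^m/(\ell^m m!)$ (your $1/(d^{\ell/d}(\ell/d)!)$ with $m=\ell/d$), the same isolation of the $m=1,2$ terms, and the same geometric-series bound on the tail via $m!\ge (m/\e)^m$. Your explicit check of $\ell=1,2$ is a small extra care the paper leaves implicit (there the tail sum is empty), but it changes nothing of substance.
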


\begin{proof}
Since an $\ell$-cycle acts regularly, it follows
from Lemma~\ref{lemma:1cycle}(i) that $1/\ell \le r(\ell)$.
The centralizer of a permutation of $\Omega_\ell$ of cycle type
$((\ell/m)^m)$ has order $(\ell/m)^m m!$.
Hence the probability that
a permutation of $\Omega_\ell$, chosen uniformly at random,
has cycle type $((\ell/m)^m)$ is $\frac{m^m}{\ell^m m!}$.  Summing
over all possible values of $m$ we get
\[ r(\ell) = \sum_{m \mid \ell} \frac{m^m}{ \ell^m m!}. \]
To get the claimed upper bound on $r(\ell)$ we estimate the sum
\[ T(\ell) = \sum_{m \mid \ell \atop m \ge 3} \frac{m^m}{ \ell^m m!} \]
using the bound
$m! \ge m^m \e^{-m}$ for $m \ge 1$,
stated earlier in~\eqref{eq:factorial_estimate}.
 This gives
\[
T(\ell) \le \sum_{m \ge 3}  \frac{m^m}{ \ell^m m!}
\le \sum_{m \ge 3} \Bigl( \frac{\e}{\ell} \Bigr)^m
= \frac{\e^3}{\ell^3} \frac{1}{1-\e / \ell}
\le \frac{\e^3}{1-\e / 3} \frac{1}{\ell^3} = \frac{c}{\ell^3}
\]
as required.
\end{proof}

\begin{proposition}
\label{prop:regular_probability}
Let $n \in \mathbf{N}$
and let $L\subseteq \Omega$ be an $\ell$-set. Let $\sigma\in
S_n$ be chosen uniformly at random.  The probability that
$\sigma$ acts regularly on $L$ is $r(\ell) \binom{n}{\ell}^{-1}$.
\end{proposition}

\begin{proof}
We count permutations $\sigma \in S_n$
that act regularly on~$L$ as follows:
there are $r(\ell)\ell!$ choices for the restriction of $\sigma$ to
$L$, and $(n-\ell)!$ choices for the restriction of $\sigma$ to the
complement $\overline{L}$ of $L$.
Hence the probability that $\sigma$ acts regularly on $L$ is
\[ \frac{r(\ell) \ell!(n-\ell)!}{n!} = r(\ell) \binom{n}{\ell}^{-1} \]
as required.
\end{proof}

The connection between permutations that have
conjugates that commute and regular
permutations is elucidated in the next lemma.
The following definition will reduce the amount
of notation required: let $U$
and $V$ be finite sets of the same cardinality,
and let $\pi : U \rightarrow V$ be a bijection.
Let $\sigma$ be a permutation of $U$ and let $\tau$
be a permutation of $V$.
We shall say that $\sigma$ and $\tau$
\emph{have conjugates that commute} if this is the case,
in the ordinary sense, for the permutations $\sigma$
and $\pi^{-1}\tau \pi$ of~$U$.

\begin{lemma}
\label{lemma:commute_condition}
Suppose that
$\sigma$, $\tau\in S_n$ have conjugates that commute and that
the longest cycle length of the cycles
in $\sigma$ and $\tau$ is $m$.
Then there exists an integer~$\ell$
with $m\leq \ell\leq n$ and $\ell$-subsets
$X\subseteq \Omega_n$ and $Y\subseteq \Omega_n$, such that
\begin{thmlist}
\item $\sigma$ acts regularly on $X$, and $\tau$ acts regularly on $Y$;
\item
if $\overline{X}$ and $\overline{Y}$ are the complements
of $X$ and $Y$ in $\Omega_n$, then
the restricted permutations
$\sigma|_{\overline{X}}$ and $\tau|_{\overline{Y}}$
have conjugates that commute.
\end{thmlist}
\end{lemma}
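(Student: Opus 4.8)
The plan is to reduce to the case where $\sigma$ and $\tau$ genuinely commute, and then to read off the regular pieces $X$ and $Y$ directly from the orbit structure of the abelian group they generate: the set $X$ will simply be a single orbit of this group. First I would unwind the definition. Since $\sigma$ and $\tau$ are both permutations of $\Omega_n$, to say that they have conjugates that commute is to say that there is some $g \in S_n$ for which $\sigma$ commutes with $\tau' := g^{-1}\tau g$. As $\tau'$ is conjugate to $\tau$ it has the same cycle type, so the longest cycle occurring in $\sigma$ or in $\tau'$ still has length $m$. It therefore suffices to prove the lemma for the commuting pair $(\sigma, \tau')$, producing sets $X$ and $Y'$, and then to transport $Y'$ through $g$ at the very end.

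So assume that $\sigma$ and $\tau'$ commute, and set $G = \langle \sigma, \tau'\rangle$, an abelian subgroup of $S_n$. The orbits of $G$ on $\Omega_n$ partition $\Omega_n$, and each orbit is invariant under both $\sigma$ and $\tau'$. Fix an orbit $O$. The key observation is that $G$ acts transitively on $O$; since $G$ is abelian, this action factors through a faithful \emph{regular} action of the quotient of $G$ by the kernel of its action on $O$, because a transitive abelian permutation group is regular. Identifying $O$ with this quotient, every element of $G$ acts on $O$ by translation, and translation by an element of order $d$ has all cycles of length $d$. Hence both $\sigma|_O$ and $\tau'|_O$ are \emph{regular} permutations of $O$.

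Next I would choose $O$ to be an orbit of $G$ containing a cycle of length $m$; such an orbit exists because some cycle of $\sigma$ or of $\tau'$ has length $m$, and every cycle lies inside a single $G$-orbit. Put $\ell = |O|$, so that $\ell \ge m$, and take $X = Y' = O$. Part (i) then holds for the pair $(\sigma, \tau')$ by the previous paragraph. For part (ii), the complement $\overline{O}$ is a union of the remaining $G$-orbits, hence is invariant under both $\sigma$ and $\tau'$, and the restrictions $\sigma|_{\overline{O}}$ and $\tau'|_{\overline{O}}$ commute; in particular they have conjugates that commute.

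Finally I would transport the data back through $g$. Set $Y = g(O)$. Since $\tau g = g\tau'$ and $\tau'(O) = O$, we obtain $\tau(Y) = Y$, and $\tau|_Y$ is conjugate, via the restriction of $g$, to the regular permutation $\tau'|_O$, so it is itself regular and has $|Y| = \ell$; this yields part (i) for $(\sigma,\tau)$. For part (ii), I use the bijection $\pi = g|_{\overline{O}} \colon \overline{X} \to \overline{Y}$ in the definition of having conjugates that commute: a direct computation gives $\pi^{-1}(\tau|_{\overline{Y}})\pi = \tau'|_{\overline{O}}$, which commutes with $\sigma|_{\overline{X}} = \sigma|_{\overline{O}}$. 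The one genuinely delicate point, and the step I expect to require the most care, is that the $G$-action on a single orbit need not be faithful: one must pass to the quotient by the kernel of this action before invoking ``transitive abelian implies regular'', and then keep the conjugation bookkeeping straight when moving between $\tau$ and its conjugate $\tau'$ and between $\overline{Y}$ and $\overline{O}$.
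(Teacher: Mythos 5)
Your proposal is correct and follows essentially the same route as the paper's own proof: both form the abelian group generated by $\sigma$ and a commuting conjugate of $\tau$, take the orbit containing a longest cycle, use the fact that restrictions of elements of a transitive abelian group to an orbit are regular, and then transport $Y$ and the complement data back through the conjugating permutation. The only cosmetic difference is that the paper fixes the orbit through a longest cycle of $\sigma$ (after a without-loss-of-generality reduction) and is terser about why the restrictions are regular, whereas you spell out the quotient-by-kernel point explicitly.
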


\begin{proof}
 Without loss of generality, we may assume that $\sigma$
contains a cycle of length $m$. Let $M\subseteq\Omega_n$ be the
$m$-set of elements in this cycle.

Since $\sigma$ and $\tau$ have conjugates that commute, there
exists $\pi\in S_n$ such that $\sigma$ and $\pi^{-1}\tau\pi$
commute. Let $A$ be the abelian subgroup of $S_n$ generated by
 $\sigma$ and $\pi^{-1}\tau\pi$, let $X$ be the orbit of $A$
containing $M$, and let $Y=X\pi^{-1}$. Since $\sigma\in A$ we have
$X\sigma=X$, and since $\pi^{-1}\tau\pi\in A$, we have
$X\pi^{-1}\tau\pi=X$, and so $Y\tau=Y$. Define $\ell=|X|=|Y|$, and note
that since $M\subseteq X$ we have $m\leq \ell$.

Since $A$ is abelian and acts transitively on $X$, the restriction of
any element of $A$ to $X$ is regular. In particular, $\sigma$ and
$\pi^{-1}\tau\pi$ act regularly on $X$, and so $\tau$ acts regularly
on $Y$.

The permutations
 $\sigma$ and $\pi^{-1}\tau\pi$ commute, and so their restrictions
$\sigma|_{\overline{X}}$ and $(\pi^{-1}\tau\pi)|_{\overline{X}}$
commute. Since
$(\pi^{-1}\tau\pi)|_{\overline{X}}=\pi^{-1}(\tau|_{\overline{Y}})\pi$,
we see that $\sigma|_{\overline{X}}$ and $\tau|_{\overline{Y}}$ have
conjugates that commute.
\end{proof}

Lemma~\ref{lemma:commute_condition} enables us to prove the following
analogue of Proposition~\ref{prop:conjugate_recurrence}.

\begin{proposition}
\label{prop:conj_commute_recurrence}
For all $n \in \mathbf{N}$
\[
\rho(S_n) \le s_k(n)^2 + \sum_{\ell=k}^{n} r(\ell)^{2} \rho(S_{n-\ell}).
\]
Moreover, if $k$ such that $n/2 < k \le n$, we have
\[
\rho(S_n) \ge \sum_{\ell=k}^n \frac{\rho(S_{n-\ell})}{\ell^2}.
\]
\end{proposition}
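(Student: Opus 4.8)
The plan is to transcribe the proof of Proposition~\ref{prop:conjugate_recurrence}, replacing the analysis of a single long cycle by that of a regular action on a long subset. Throughout, $\sigma$ and $\tau$ denote permutations of $\Omega_n$ chosen independently and uniformly at random, and the roles played by Lemma~\ref{lemma:1cycle} and the elementary remark on conjugate cycle types in that earlier proof will here be played by Proposition~\ref{prop:regular_probability} and Lemma~\ref{lemma:commute_condition}.

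For the first (upper) inequality, for each pair of $\ell$-sets $X, Y \subseteq \Omega_n$ I would define the event $E(X,Y)$ that $\sigma$ acts regularly on $X$, that $\tau$ acts regularly on $Y$, and that $\sigma|_{\overline{X}}$ and $\tau|_{\overline{Y}}$ have conjugates that commute (in the sense defined before Lemma~\ref{lemma:commute_condition}). By Proposition~\ref{prop:regular_probability} the probability that $\sigma$ acts regularly on $X$ is $r(\ell)\binom{n}{\ell}^{-1}$, and, exactly as in the proof of Proposition~\ref{prop:conjugate_recurrence}, conditioning on this leaves $\sigma|_{\overline{X}}$ uniformly distributed on the symmetric group on $\overline{X}$ (the number of regular permutations of $X$ being independent of the choice of $\sigma|_{\overline{X}}$); the same holds for $\tau$, and the two restrictions remain independent. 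Hence $\P(E(X,Y)) = r(\ell)^2 \binom{n}{\ell}^{-2}\rho(S_{n-\ell})$, and summing over the $\binom{n}{\ell}^2$ pairs $X, Y$ of size $\ell$ gives $r(\ell)^2\rho(S_{n-\ell})$. Now Lemma~\ref{lemma:commute_condition} shows that whenever $\sigma$ and $\tau$ have conjugates that commute and their longest cycle has length $m \ge k$, some $E(X,Y)$ with $\ell \ge m \ge k$ occurs; in the complementary case $m < k$ both $\sigma$ and $\tau$ have all cycles of length less than $k$, an event of probability $s_k(n)^2$. Boole's inequality applied to the union of the $E(X,Y)$ over $\ell \ge k$, together with the bound $s_k(n)^2$ for the case $m < k$, then yields the first inequality.

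For the lower inequality, assuming $n/2 < k \le n$, I would instead use the event $F(X,Y)$ that $\sigma$ acts as a single $\ell$-cycle on $X$ and $\tau$ as a single $\ell$-cycle on $Y$, while $\sigma|_{\overline{X}}$ and $\tau|_{\overline{Y}}$ have conjugates that commute. The same conditioning argument, now invoking Lemma~\ref{lemma:1cycle}(i), gives $\P(F(X,Y)) = \ell^{-2}\binom{n}{\ell}^{-2}\rho(S_{n-\ell})$. Two points must be checked. First, each $F(X,Y)$ forces $\sigma$ and $\tau$ to have conjugates that commute: one conjugates $\tau$ by a permutation carrying $Y$ to $X$ so that the $\ell$-cycle $\tau|_Y$ is sent to the very cycle $\sigma|_X$ (any two $\ell$-cycles being conjugate, and an $\ell$-cycle commuting with itself), while simultaneously conjugating $\tau|_{\overline{Y}}$ into a permutation of $\overline{X}$ commuting with $\sigma|_{\overline{X}}$; since $X$ and $\overline{X}$ are invariant under both resulting permutations and the restrictions commute on each part, the permutations commute. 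Second, because $\ell > n/2$ each of $\sigma$ and $\tau$ has at most one cycle of length $\ell$, so $\ell$, $X$ and $Y$ are determined by $\sigma$ and $\tau$ and the events $F(X,Y)$ are pairwise disjoint across all admissible $\ell$, $X$, $Y$. Summing $\P(F(X,Y))$ over the $\binom{n}{\ell}^2$ pairs and over $\ell$ from $k$ to $n$ gives $\sum_{\ell=k}^n \rho(S_{n-\ell})/\ell^2$, as required.

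I expect the main obstacle to be the forward implication in the lower bound, namely verifying that $F(X,Y)$ genuinely produces commuting conjugates, since this is the one step that is not a formal copy of the $\kappa$-argument and relies on the fact that the centralizer of an $\ell$-cycle in the symmetric group on $X$ is generated by that cycle. A secondary point requiring care is the conditioning-uniformity claim used in both inequalities: one should confirm that fixing a regular (respectively single-cycle) action on $X$ leaves the complementary restriction uniform and independent of the corresponding data for $\tau$, so that the factor $\rho(S_{n-\ell})$ is correct.
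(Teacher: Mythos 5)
Your proposal is correct and follows essentially the same route as the paper: the same events $E(X,Y)$ (regular actions) with Boole's inequality for the upper bound, and the same switch to the single-$\ell$-cycle events $F(X,Y)$ to obtain disjointness for the lower bound. The only difference is that you spell out the verification that $F(X,Y)$ forces commuting conjugates, which the paper merely asserts; your argument for that step is sound.
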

\begin{proof}
The proof follows the same pattern as
the
proof of Proposition~\ref{prop:conjugate_recurrence}, and
so we shall give it only in outline.
Given permutations $\sigma$, $\tau \in S_n$ and subsets
$X$, $Y\subseteq\Omega_n$ of the same cardinality $\ell$,
let $E(X,Y)$ be the event
that $\sigma$ acts regularly on $X$, $\tau$ acts regularly on $Y$ and
$\sigma|_{\overline{X}}$ and $\tau|_{\overline{Y}}$ have conjugates
that commute.
If $E(X,Y)$ holds then the
permutations $\sigma|_{\overline{X}}$ and $\tau|_{\overline{Y}}$ are
uniformly distributed over
the symmetric groups on $\overline{X}$ and $\overline{Y}$
respectively. Hence it follows from
Proposition~\ref{prop:regular_probability}
that 
\[
\P(E(X,Y))=\binom{n}{\ell}^{-2} r(\ell)^2 \rho(S_{n-\ell}).
\]

 Lemma~\ref{lemma:commute_condition} implies that
if $\sigma$, $\tau \in S_n$ have conjugates that
commute then either all of the cycles in $\sigma$ and $\tau$
have length strictly less than $k$, or at least one
of $\sigma$ and $\tau$ has a cycle of length at least $k$ and there exist
sets $X$ and $Y$ of cardinality $\ell \ge k$ on which $X$ and $Y$ both
act regularly.  (These events are not mutually exclusive.) Thus
\begin{align*}
\rho(S_n) &\leq
s_k(n)^2+\sum_{\ell=k}^n\sum_{|X|=\ell}\sum_{|Y|=\ell}\P(E(X,Y))\\
&\leq s_k(n)^2 + \sum_{\ell=k}^{n} r(\ell)^{2} \rho(S_{n-\ell}),
\end{align*}
as required by the first part of the Proposition.

The events $E(X,Y)$ cannot be
used to establish the lower bound of the
proposition, since they are not disjoint, even when
$\ell>n/2$. Instead, we define $F(X,Y)$ to be the event
that $\sigma$ acts as an $\ell$-cycle on $X$, $\tau$ acts as an
$\ell$-cycle on $Y$ and $\sigma|_{\overline{X}}$ and
$\tau|_{\overline{Y}}$ have conjugates that commute.
A similar argument to that used
for $E(X,Y)$, using Proposition~\ref{prop:regular_probability}
in place of Lemma~\ref{lemma:1cycle}(i), shows that
\[ \P(F(X,Y)) = \binom{n}{\ell}^{-2}
\ell^{-2} \rho(S_{n-\ell}).\]

The events $F(X,Y)$ with $|X|=|Y| > n/2$ are disjoint since a
permutation can contain at most one cycle of length strictly greater than $n/2$. Moreover,
if an event $F(X,Y)$ occurs, then $\sigma$ and $\tau$ have
conjugates that commute. It follows that whenever $k>n/2$,
\begin{align*}
\rho(S_n) &\ge \sum_{\ell=k}^n\sum_{|X|=\ell}\sum_{|Y|=\ell}\P(F(X,Y))\\
& = \sum_{\ell=k}^n \frac{\rho(S_{n-\ell})}{\ell^2}
\end{align*}
as required.
\end{proof}

\section{Proofs of Theorems~\ref{thm:conj_comm_uniform_bound}
and~\ref{thm:comm_conj_Plimit}}
\label{sec:rho_proofs}

We need the following computational lemma, to which
remarks similar to those made before Lemma~\ref{lemma:kappa_comp} apply.

\begin{lemma}\label{lemma:rho_comp}
We have
\begin{thmlist}
\item $\rho(S_{n}) \le C_\rho / n^2$ for all $n \le 180$;

\item $180 s_{30}(180) < 0.00247$;

\item $ns_{30}(n)\geq (n+1)s_{30}(n+1)$ for $29\leq n\leq 180$;

\item $\sum_{m=0}^{30} \rho(S_m)
< 6.11806$;

\item $C_\rho
= \frac{5805523}{508032}$
and $11.42747 < C_\rho < 11.42748$.

\end{thmlist}

\end{lemma}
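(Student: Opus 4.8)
The plan is to mirror the proof of Lemma~\ref{lemma:kappa_comp}, replacing $\kappa$ by $\rho$ and using the recursive bound of Proposition~\ref{prop:conj_commute_recurrence} in place of Proposition~\ref{prop:conjugate_recurrence}. Parts (ii) and (iii) involve only the quantities $s_{30}(n)$ and are independent of $\rho$. They follow directly from the recurrence \eqref{eq:1rec}, exactly as do parts (ii) and (iii) of Lemma~\ref{lemma:kappa_comp}: one iterates \eqref{eq:1rec} from the initial values $s_{30}(n) = 1$ for $n \le 29$ to obtain $s_{30}(n)$ as an exact rational for every $n \le 180$, reads off $180\, s_{30}(180)$ for (ii), and checks the monotonicity claim (iii) by comparing consecutive terms $n\, s_{30}(n)$ and $(n+1)\, s_{30}(n+1)$.

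Parts (iv) and (v) require the exact values of $\rho(S_m)$ for $m \le 30$, and for $m = 10$ respectively. Writing $\rho(S_m) = |S_m|^{-2} \sum |C||D|$, where the sum runs over ordered pairs of conjugacy classes $C$, $D$ of $S_m$ that commute, each term is computed from the cycle-type class sizes, and the pairs of commuting classes are identified using the necessary and sufficient condition of \cite[Proposition~4]{BritnellWildon}. Summing over $0 \le m \le 30$ gives the bound in (iv); taking $m = 10$ and multiplying by $10^2$ yields the exact rational value of $C_\rho$ in (v), whence the decimal bounds follow immediately.

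For part (i), the exact values just described already establish $\rho(S_n) \le C_\rho/n^2$ for the small values of $n$ for which they are available (the introduction records these for $n \le 35$). For the remaining range we apply the upper bound of Proposition~\ref{prop:conj_commute_recurrence},
\[ \rho(S_n) \le s_k(n)^2 + \sum_{\ell=k}^{n} r(\ell)^2 \rho(S_{n-\ell}), \]
choosing for each $n$ whichever $k$ gives the strongest bound and substituting the already-established bounds on the smaller terms $\rho(S_{n-\ell})$. The regular-permutation probabilities are obtained exactly from the formula $r(\ell) = \sum_{m \mid \ell} m^m / (\ell^m m!)$ derived in the proof of Lemma~\ref{lemma:ell-regular}. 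Since every input is an exact rational, the resulting estimates are rigorous.

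The main obstacle is the exact evaluation of $\rho(S_m)$. Unlike $\kappa(S_m)$, which depends only on the list of class sizes, $\rho(S_m)$ requires deciding the commuting relation on each pair of conjugacy classes via the combinatorial criterion of \cite{BritnellWildon}; this is the computationally heaviest ingredient. All the assertions may be verified using the software mentioned in the introduction.
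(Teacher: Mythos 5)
Your proposal matches the paper's proof: exact computation of $\rho(S_m)$ via the commuting-classes criterion of \cite[Proposition~4]{BritnellWildon} for $n\le 35$, the recurrence \eqref{eq:1rec} for parts (ii) and (iii), and the upper bound of Proposition~\ref{prop:conj_commute_recurrence} with an optimal choice of $k$ for the remaining range in part (i). This is essentially the same approach as the paper, which cites the same computational methods used for Lemma~\ref{lemma:kappa_comp}.
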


\begin{proof}
For (i) we compute
the exact value of $\rho(S_n)$ when $n \le 35$.
For $n \ge 36$
the result follows from
the bound in
Proposition~\ref{prop:conj_commute_recurrence},
again using whichever choice of $k$ gives the strongest result.
Parts (ii), (iii), (iv) and (v) are proved by the same
methods used in Lemma~\ref{lemma:kappa_comp}.
\end{proof}

\noindent
\begin{proof}[Proof of Theorem~\ref{thm:conj_comm_uniform_bound}]
The proof proceeds along the same lines as the
proof of
Theorem~\ref{thm:conjugate_uniform_bound}.
By Lemma~\ref{lemma:rho_comp} the theorem holds if $n \le 180$, and
so we may assume, inductively, that $n\ge 180$.
Proposition~\ref{prop:conj_commute_recurrence} implies that
\begin{equation}\label{eq:rho_summands}
 n^2 \rho(S_n) \le n^2 s_{30}(n)^2 + n^2 \!\!  \sum_{\ell = n-30}^n
\!\! r(\ell)^2 \rho(S_{n-l}) +
 n^2  \sum_{\ell = 30}^{n-31} r(\ell)^2 \rho(S_{n-\ell}).
\end{equation}
By Proposition~\ref{prop:nsk_decreasing} and Lemma~\ref{lemma:rho_comp}(ii) and~(iii)
we have $ns_{30}(n) \le 180s_{30}(180) \le 0.00247$ for all $n\ge 180$.
Hence $n^2 s_{30}(n)^2 \le 0.00001$.
To deal with the other two summands, it will be useful to introduce
the function
$R(\ell) = 1 + 2/\ell + c/\ell^2$. Note
that, by Lemma~\ref{lemma:ell-regular},
we have $r(\ell) < R(\ell)/\ell$ for all $\ell \in \mathbf{N}$.
For the second summand, we have
\begin{align*}
n^2 \sum_{\ell=n-30}^n
r(\ell)^2 \rho(S_{n-\ell}) &\le
\Bigl( \frac{n}{n-30}\Bigr)^2 R(n-30)^2
\sum_{m=0}^{30} \rho(S_m) \\
&\le \Bigl( \frac{180}{150} \Bigr)^2 R(150)^2
\sum_{m=0}^{30} \rho(S_m) \\ &\le 9.21704   
\end{align*}
where we have used Lemma~\ref{lemma:rho_comp}(iv) and the obvious fact
that $R(\ell)$ is a decreasing function of~$\ell$.
For the third summand, we use the inductive hypothesis to obtain
\[ n^2 \sum_{\ell=30}^{n-36} r(\ell)^2\rho(S_{n-\ell})
\le  R(30)^2 \sum_{\ell = 30}^{n-31} \frac{n^2 C_\rho}{\ell^2 (n-\ell)^2}.
\]
Arguing as in the proof of Theorem~\ref{thm:conjugate_uniform_bound}
we find that
\begin{align*}
n^2 \sum_{\ell=30}^{n-36} r(\ell)^2 \rho(S_{n-\ell})
&\le  R(30)^2 C_\rho \Bigl( \frac{2}{30} + \frac{4 \log (180/30)}{180}
+ \frac{180^2}{30^2 \cdot 150^2} \Bigr) \\
&\le 2.10126.
\end{align*}
Hence
\[ n^2 \rho(S_n) \le 0.00001 + 9.21704 +  2.10126 = 11.31831 \le C_\rho \]
and the theorem follows.
\end{proof}

\begin{proof}[Proof of Theorem~\ref{thm:comm_conj_Plimit}]
One may prove that
\[ \liminf_{n \rightarrow \infty} n^2 \rho(S_n) \ge A_\rho\]
by using the lower bound in Proposition~\ref{prop:conj_commute_recurrence}
and the same method as Proposition~\ref{prop:Pliminf1}.
For the upper limit, it follows from
Proposition~\ref{prop:conj_commute_recurrence},
by the same arguments used in Proposition~\ref{prop:Pliminf2} that
\[
\limsup_{n \rightarrow \infty} n^2 \rho(S_n) =
\limsup_{n \rightarrow \infty} \; n^2 \! \sum_{\ell=\lfloor n/\log n\rfloor}^nr(\ell)^2 \rho(S_{n-\ell}).
\]
Let $\epsilon \in \R$ be given with $0 < \epsilon < 1$. 
Lemma~\ref{lemma:ell-regular} shows that $r(\ell)\leq
(1+\epsilon)/\ell$ for all sufficiently large $\ell$. So
provided $n$ is sufficiently large, we have
\[
\sum_{\ell=\lfloor n/\log n\rfloor}^nr(\ell)^2\rho(S_{n-\ell})\leq (1+\epsilon)^2\sum_{\ell=\lfloor n/\log n\rfloor}^n \frac{\rho(S_{n-\ell})}{\ell^2}.
\]
It now follows, as in the proof of Proposition~\ref{prop:Pliminf2}, that
\[
n^2\sum_{\ell=\lfloor n/\log n\rfloor}^n \frac{\rho(S_{n-\ell})}{\ell^2}
\leq \frac{A_\rho}{(1-\epsilon)^2}
\]
whenever $n$ is sufficiently large, and so
\[
\limsup_{n \rightarrow \infty} n^2 \rho(S_n)
\leq A_\rho \frac{(1+\epsilon)^2}{(1-\epsilon)^2}.
\]
The theorem follows.
\end{proof}

\section{Final remarks and open problems}
\label{sec:final}

\subsection*{Remarks on $\kappa(G)$}
There are a number of interesting open problems that concern the
 spectrum of values taken by $\kappa(G)$ as $G$ varies
over all finite groups. It is clear that if $G$ and $H$ are
finite groups then \hbox{$\kappa(G \times H) = \kappa(G) \times \kappa(H)$},
and so the spectrum is closed under multiplication.
Moreover, generalizing part of
Theorem~\ref{thm:uppergap}, it is not hard to prove that
if there exists a Frobenius group with point stabiliser
$H$ and Frobenius kernel $K$ then the spectrum has a limit
point at $\kappa(H) - \frac{1}{|H|^2}$.
Indeed, if $G$ is such
a group then, by Lemma~\ref{lemma:frobenius},
\[ \kappa(G) = \frac{1}{|G|^2} + \frac{1}{|H|} \Bigl( \kappa(K) - \frac{1}{|K|^2} \Bigr)
+ \Bigl(  \kappa(H) - \frac{1}{|H|^2} \Bigr). \]
We may construct Frobenius groups
of arbitrarily large cardinality,
each with point stabiliser~$H$, by making $H$ act in the obvious way
on the direct product $K^m$ for $m \in \mathbf{N}$.
Since $\kappa(K^m) = \kappa(K)^m \rightarrow 0$ as $m \rightarrow \infty$,
this family gives the claimed limit point.

It is natural to ask whether every limit point of $\kappa(G)$
is explained by a family of Frobenius groups. If this is false,
one might still ask whether there are irrational limit points.

We may also ask whether we can say anything about the
value of $\kappa(G)$ for a typical group~$G$. A framework
for this question is as follows:
for a fixed $\alpha\in [0,1]$, let $f_\alpha(n)$ be the number of isomorphism classes of groups $G$ of order $n$ with $\kappa(G)\geq \alpha$. What can be said about the growth of this function?

\subsection*{Remarks on $\rho(G)$}
It was shown at the end of \cite{BritnellWildon} that
if $G$ is a finite group and $g \in G$, then $g \in Z(G)$
if and only if,  for each $h \in G$,
there is a conjugate of $g$ which commutes with $h$.
 It follows that $\rho(G) = 1$
if and only if $G$ is abelian.

It is not hard to see that
can be no `upper gap' result on $\rho(G)$ analogous
to Theorem~\ref{thm:uppergap}.
Indeed, if $G$ is a Frobenius group
with abelian point stabiliser $H$ and Frobenius kernel $K$
then, by~\eqref{eq:decomp} in the proof of Lemma~\ref{lemma:frobenius},
any two elements in $G \backslash K$ have conjugates
that commute. Hence
\[ \rho(G) > \Bigl( 1 - \frac{1}{|H|} \Bigr)^2 . \]
It follows on considering
the Frobenius groups of order $p(p-1)$ for
large primes $p$, that the set of values of $\rho(G)$
has a limit point at $1$.

It is obvious that if $G$ is
any finite group then $\rho(G) \ge \kappa(G)$, and so
the `lower gap' result in Theorem~\ref{thm:lowergap}
also applies to $\rho$. We leave it to the reader to check
that $\rho(G) = \kappa(G)$ if and only if~$G$ satisfies
the second Engel identity $[x,[x,g]] = 1$ for all $x,g\in G$.
Such groups are necessarily nilpotent, by an important theorem
of Zorn \cite{Zorn}.

More mysteriously, we note that with surprising frequency,
$|G|\rho(G)$ is an integer.
For instance, this is the case for all groups of order $< 54$.

Finally, we remark that all of the
algorithms we know of which check whether two
given permutations are conjugate to commuting elements are inefficient in the worst case (though we know of algorithms that are efficient for most pairs of permutations in practice). Does an efficient algorithm for this problem exist?

\subsection*{A unified framework.}

In \cite[\S 1.2]{Dixon2002}, Dixon considers the probability that a
particular instance of a law will be found to hold in a finite group.
For example, the probability that the law $xy=yx$ holds
in a finite group
$G$ is the commuting probability $\cp(G)$.
If we generalize the idea of a law to allow
existential quantifiers then we obtain a unified
framework for our results on $\kappa$ and $\rho$; the
relevant formulae in the first order language of groups are
$ \exists g \; x^g = y$ and $\exists g\; x^g y = yx^g$,
respectively. If $N$ is a normal subgroup
of $G$ and a probability is defined in this way, then the probability associated with $G/N$ is bounded below by the corresponding probability associated with $G$.
Thus a slightly weaker version of
Lemma~\ref{lemma:quo} holds in much greater generality.

\subsection*{Acknowledgements}
The authors would like to thank the anonymous referee for a careful reading
of this paper.


\begin{thebibliography}{99}

\bibitem{ArratiaEtAl} R. Arratia, A. D. Barbour and S. Tavar{\'e},
\emph{Logarithmic combinatorial structures: a probabilistic approach},
European Mathematical Society, 2003.

\bibitem{Bertram} E. A. Bertram, \emph{Lower bounds for the number of conjugacy classes in finite groups},
Ischia Group Theory 2004, Contemporary Mathematics 402, American Mathematical
Society, Providence, RI, 2006, 95�-117.

\bibitem{BritnellWildon} J. R. Britnell and M. Wildon, \emph{Commuting
elements in conjugacy classes: An application of Hall's Marriage
Theorem to group theory}, J. Group. Theory 12 (2009) 795--820.

\bibitem{BritnellWildonGL} J. R. Britnell and M. Wildon,
\emph{On types and classes of commuting matrices over finite fields},
J. Lond. Math. Soc. 83 (2011) 470--492.

\bibitem{CaminaCamina} A. R. Camina and R. D. Camina, \emph{The influence of conjugacy class sizes
on the structure of finite groups: A survey}, Asian-European Journal of Mathematics, to appear.

\bibitem{Dixon1973} J. D. Dixon, \emph{Solution to Problem 176}, Canadian Mathematical Bulletin, 16 (1973), 302.

\bibitem{Dixon2002} J. D. Dixon, \emph{Probabilistic group theory},
C. R. Math. Acad. Sci. Canada, 24 (2002), 1--15.

\bibitem{ErdosTuranIV} P. Erd\H{o}s and P. Tur\'an, \emph{On some problems of a statistical
group theory IV}, Acta Mathematica Academiae Scientiarum Hungaricae 19 3--4 (1968), 413--435.

\bibitem{FeitThompson}
W. Feit and J. G. Thompson, \emph{Groups which contain a self-centralizing
subgroup of order $3$}, Nagoya Math. J. 21 (1962), 185--97.

\bibitem{FlajoletEtAl}
P. Flajolet, {\'E}. Fusy, X. Gourdon, D. Panario and N.
Pouyanne, \emph{A Hybrid of Darboux's Method and Singularity Analysis in Combinatorial Asymptotics}, Electr. J. Comb. 13 (2006).

\bibitem{Goncharov1} V. Goncharov, \emph{Sur la distribution des cycles dans les permutations},
C. R. (Doklady) Acad.\ Sci.\ URSS (N.S.) 35 (1942), 267--269.

\bibitem{Goncharov2} V. Goncharov, \emph{Du domaine d'analyse combinatoire}, Izv.\ Akad.\ Nauk.\
SSSR Ser.\ Mat.\ 8 (1944), 3--48.

\bibitem{Gorenstein}
D. Gorenstein, \emph{Finite Groups, second edition}, Chelsea Publishing Company, New York, 1980.

\bibitem{Gustafson} W. H. Gustafson, \emph{What is the probability that two group elements
commute?}, Amer. Math. Monthly 80 (1973), 1031--1034.

\bibitem{Herzog}
M. Herzog, \emph{On centralizers of involutions}, Proc. Amer. Math. Soc. 22
(1969), 170--174.

\bibitem{Isaacs}
I. M. Isaacs, \emph{Character theory of finite groups}, Associated Press,
(1976).


\bibitem{Karamata}
J. Karamata, \emph{Sur une in{\'e}galit{\'e} relative aux fonctions convexes}, Publications math{\'e}matiques de l'Universit{\'e} de Belgrade,
1 (1932), 145--148.

\bibitem{Keller} T. M. Keller,
\emph{Finite groups have even more conjugacy classes},
Israel J. Math. 181 1 (2011), 433--444.

\bibitem{Laffeyp}
T. J. Laffey, \emph{The number of solutions of $x^p=1$ in a finite
group}, Math. Proc. Cambridge Philos. Soc. 80 (1976), 229--231.

\bibitem{Laffey3}
T. J. Laffey, \emph{The number of solutions of $x^3=1$ in a $3$-group},
Math. Z. 149 (1976), 43--45.

\bibitem{Lescot} P. Lescot, \emph{Isoclinism classes and commutativity degrees of finite groups},
J. Algebra 177 3 (1995), 847--869.

\bibitem{OEIS}
OEIS Foundation Inc. (2011), \emph{The On-Line Encyclopedia of Integer
Sequences}, \url{http://oeis.org}.


\bibitem{PeytonJones}
S. Peyton-Jones \emph{et al}, \emph{The {Haskell} 98 Language and Libraries: The Revised Report}, Journal of Functional Programming 13 (2003), 0--255.

\bibitem{Pyber} L. Pyber, \emph{Finite groups have many conjugacy classes}, J. London Math.
Soc. (2) 46 2 (1992), 239-�249.

\bibitem{SheppLloyd} L. A. Shepp and S. P. Lloyd, \emph{Ordered cycle lengths in a random permutation},
Trans.\ Amer.\ Math.\ Soc.\ 121 (1966) 340--357.

\bibitem{Suzuki} M. Suzuki, \emph{On finite groups containing an element
of order four which commutes only with its powers}, Illinois J. Math
3 (1959), 255--271.

\bibitem{Thompson}
J. Thompson. \emph{Finite groups with fixed-point-free automorphisms of prime order},
\emph{Proc.\ Nat.\ Acad.\ Sci.\ U.S.A.} \textbf{45} (1959), 578--581.

\bibitem{Wall}
C. T. C.\  Wall, \emph{On groups consisting mostly of
  involutions}, Proc. Camb. Phil. Soc. \textbf{67} (1970), 251--262.

\bibitem{GWall} G. E. Wall, \emph{On Hughes' $H_p$ problem},
Proc. Internat. Conf. Theory of Groups, (Canberra, 1965), Gordan
  and Breech, New York, 1967, 357-362.

\bibitem{Mathematica} Wolfram Research, Inc., \emph{Mathematica}, Version 7.0, Champaign, IL (2008).

\bibitem{Zorn} M. Zorn, \emph{Nilpotency of finite groups},
Bull. Amer. Math. Soc. \textbf{42} (1936), 485-486.

\end{thebibliography}
\end{document}